\documentclass[12pt]{article}
\usepackage{amsfonts}
\usepackage{amsmath, graphicx, amsfonts,amssymb, calrsfs}
\usepackage{amsfonts,mathrsfs, color, amsthm,}
\usepackage{bm}
\usepackage[numbers,sort&compress]{natbib}
\usepackage{xcolor}
\usepackage{amsmath, graphicx, amsfonts,amssymb, calrsfs}
\usepackage{amsfonts,mathrsfs, color, amsthm,mathrsfs,float}
\providecommand{\hK}[1]{\|#1\|_{K^*}}

\renewcommand{\hK}[1]{\|#1\|_{K}}

\usepackage{hyperref}
\usepackage[mathlines]{lineno}
\allowdisplaybreaks

\usepackage{enumitem}
\usepackage{hyperref}
\hypersetup{
  colorlinks   = true, 
  urlcolor     = blue, 
  linkcolor    = blue, 
  citecolor   = red 
  }

 \def\R{\mathbb{R}}

\def\Ksc{\mathcal{K}^{ss}_{(o)}}

\newtheorem{theorem}{Theorem}[section]
\newtheorem{lemma}{Lemma}[section]
\newtheorem{remark}{Remark}[section]
\newtheorem{proposition}{Proposition}[section]
\newtheorem{corollary}{Corollary}[section]

\newtheorem{definition}{Definition}[section]

\def\bt{\begin{theorem}}
\def\et{\end{theorem}}
\def\bl{\begin{lemma}}
\def\el{\end{lemma}}
\def\br{\begin{remark}}
\def\er{\end{remark}}
\def\bc{\begin{corollary}}
\def\ec{\end{corollary}}
\def\bd{\begin{definition}}
\def\ed{\end{definition}}
\def\bp{\begin{proposition}}
\def\ep{\end{proposition}}

\def\RN{\mathbb{R}^N}
\makeatletter
\renewcommand{\@makefntext}[1]{%
  \noindent
  \makebox[0pt][r]{\@makefnmark\ }#1%
}
\makeatother

\title{
Anisotropic Caffarelli-Kohn-Nirenberg inequalities for the
Minkowski functional: $L^p$-remainder identities and extremals
}

\author{Zhenzhen Wei}

\date{}

\begin{document}

\maketitle

\begingroup
\renewcommand{\thefootnote}{}
\footnotetext{%
\small
\textit{2020 Mathematics Subject Classification:}
26D10, 46E35, 52A40.\\
\textit{Keywords:}
Caffarelli-Kohn-Nirenberg inequalities;
Minkowski functional; Heisenberg uncertainty principle;
anisotropic Hardy identities; sharp constants;
extremal functions.
}
\endgroup

\begin{abstract}
We establish exact anisotropic $L^p$-Hardy and
Caffarelli-Kohn-Nirenberg identities associated with the Minkowski
functional $\|\cdot\|_K$ and the anisotropic radial derivative
$\mathcal R_K$, where $1<p<\infty$ and $K\subset\RN$ is a smooth
and strictly convex body containing the origin in its interior,
not necessarily origin-symmetric. These identities contain explicit
nonnegative $R_p$-remainders and yield sharp anisotropic radial
$L^p$-Hardy and $L^p$-Caffarelli-Kohn-Nirenberg inequalities.
In the noncritical parameter regions, we characterize the full
family of extremal functions in the natural weighted completion
space, while in the critical case the sharp constant is not attained
by any nonzero function in that space. As applications, we establish
sharp anisotropic $L^p$-Heisenberg uncertainty principles and obtain
explicit anisotropic Gaussian-type optimizers. Finally, letting $K^*$ denote the polar body of $K$, we derive
norm-based anisotropic gradient inequalities associated with the
origin-symmetric body $K^*\cap(-K^*)$. When $K$ is
origin-symmetric, the corresponding gradient constants are sharp,
including those in the critical case.
\end{abstract}

\section{Introduction}
The Caffarelli-Kohn-Nirenberg (CKN) inequalities were introduced by Caffarelli, Kohn and Nirenberg
\cite{CaffarelliKohnNirenberg}. They form a fundamental class of weighted
interpolation inequalities in $\mathbb R^N$, and contain, as special or
limiting cases, several classical inequalities, such as the Hardy inequality,
the Sobolev inequality, the Gagliardo-Nirenberg inequality, Hardy-Sobolev-type
inequalities, and the weighted uncertainty principles. These inequalities
play important roles in elliptic partial differential equations, mathematical
physics, spectral theory, geometric analysis, and the calculus of variations.

A central problem in the CKN theory is to determine the sharp constants,
decide whether the sharp constants are attained, and describe the corresponding
extremal functions. In the Euclidean setting, the existence, nonexistence, and
symmetry properties of the extremal functions depend on the parameters; see,
e.g.,
\cite{CatrinaWang,CaldiroliMusina,FelliSchneider,
DolbeaultEstebanLossTarantello,DolbeaultEstebanLoss,WangWillem,ZhongZou}.
Further developments on sharp weighted forms, explicit optimizers, and related
extremal problems can be found in
\cite{CatrinaCosta,LamLuSharpCKN,LamGeneralSharpWeightedCKN,DongLamLu}.
The sharp constants and optimizers for several related classical inequalities
have also been widely studied. For example, the sharp Sobolev constant was
obtained by Talenti \cite{Talenti}, the sharp Hardy-Littlewood-Sobolev
constant was obtained by Lieb \cite{Lieb}, and sharp Gagliardo-Nirenberg
inequalities and their mass-transport proofs were studied in
\cite{Agueh,DelPinoDolbeault,CorderoNazaretVillani,Nguyen}.

Hardy-type inequalities provide an important source of motivation for CKN
inequalities. A general $L^p$-Hardy-type inequality has the form
\begin{align*}
\int_{\Omega}
A(x)|\nabla u|^pdx
\ge
\int_{\Omega}
B(x)|u|^pdx,
\end{align*}
where $\Omega$ is an open domain in $\mathbb R^N$, and $A$ and $B$
are suitable weight functions. From the point of view of identity methods,
Hardy inequalities may be regarded as non-optimal or non-scale-invariant
forms of the corresponding CKN inequalities, while CKN inequalities arise by
optimizing suitable parameters in Hardy-type identities. Improved Hardy
inequalities, ground-state representations, Bessel-pair methods, and related
identity approaches have been studied in
\cite{BrezisVazquez,FrankSeiringer,GhoussoubMoradifam,DuyLamLu}.

A recent and effective approach to Hardy and CKN inequalities is based on exact
identities with nonnegative remainders. Such identities provide a direct way
to determine sharp constants and equality equations, and they also explain the
existence or nonexistence of optimizers through the vanishing of the
corresponding remainders. In the $L^2$ setting, Cazacu, Flynn, Lam and Lu
\cite{CazacuFlynnLamLu} developed this point of view and used Hardy and CKN
identities to derive sharp inequalities, characterize the equality cases, and
establish stability results. The method was later extended to the $L^p$
setting by Do, Flynn, Lam and Lu \cite{DoFlynnLamLu}. 
A further motivation for the identity method is quantitative stability.
For the Sobolev and isoperimetric inequalities, stability estimates
were obtained in
\cite{BianchiEgnell,CianchiFuscoMaggiPratelli,
FigalliMaggiPratelli,FigalliNeumayer},
and for the Heisenberg uncertainty principle in
\cite{McCurdyVenkatraman,Fathi}.
Since the deficits in the identities below are given by explicit
nonnegative $R_p$-remainders, they are naturally suited to
quantitative stability estimates. We intend to pursue this question
elsewhere.
Related $L^p$-Hardy identity approaches based on Bessel pairs,
monomial weights, and general distance functions can also be found
in
\cite{DLL22,Lam18,LLZ20,FLL25}. The present paper follows this
identity-and-remainder approach, but replaces the Euclidean radial derivative
by the anisotropic radial derivative associated with the Minkowski functional.

In many geometric and analytic problems, the Euclidean norm is not the only
natural choice of distance. Let $K\subset\RN$ be a smooth
and strictly convex body containing
the origin in its interior, and let $\hK{x}$ be the Minkowski functional of
$K$. For $r>0$,
$
\{x\in\RN:\hK{x}\le r\}=rK,
$
and hence $\hK{x}$ provides a natural anisotropic distance to the origin.
The corresponding anisotropic radial derivative is defined by
$$
\mathcal R_K(u)(x)
=
\nabla u(x)\cdot \frac{x}{\hK{x}},
\quad x\ne o.
$$
When $K$ is the Euclidean unit ball, this becomes the Euclidean radial
derivative. For a smooth
and strictly convex body containing the origin in its interior, especially 
when $K$ is not
origin-symmetric, the Minkowski functional need not be even.

Anisotropic inequalities involving origin-symmetric convex bodies and their Minkowski functionals have been studied from several viewpoints. In the norm-based Finsler
framework, one usually works with an even, convex, positively one-homogeneous
function and its polar function. This setting has led to anisotropic Sobolev
inequalities, anisotropic isoperimetric inequalities, Hardy inequalities, and
related convex symmetrization methods; see
\cite{AFTL97,Ferone-Volpicelli,VanSchaftingen,
BianchiCianchiGronchi,DellaPietraDiBlasioGavitone,
MercaldoSanoTakahashi}.
Anisotropic CKN-type inequalities and weighted Hardy-Sobolev-type
inequalities have also been studied in several directions, including
the Finsler norm setting and coordinate-splitting weights; see
\cite{LiYan,Shen,BaoChen}. Related CKN-type inequalities involving
quasi-norms and radial derivatives on homogeneous groups were obtained
by Ozawa, Ruzhansky and Suragan \cite{OzawaRuzhanskySuragan}. For
related Hardy identities and inequalities on homogeneous and Carnot
groups, see also \cite{Lam19,FL21}. For further developments related
to anisotropic Sobolev capacities, anisotropic Riesz potentials, and
isoperimetric, capacitary, and Minkowski-type variational inequalities
for convex bodies, we refer to
\cite{XY17,HXY18,Xiao07,Xiao17a,Xiao17b,CNSXYZ15,JWX26,JX26}.

The existing norm-based anisotropic inequalities are often formulated
under an origin-symmetry or norm assumption. In contrast, the
Minkowski functional of a smooth
and strictly convex body containing the origin
in its interior need not be even. This leads to a genuinely
non-symmetric anisotropic setting. 
Let $K^*$ denote the polar body of $K$. In the non-symmetric
setting, the usual estimate
$$
|x\cdot y|
\le
\|x\|_K\|y\|_{K^*}
$$
is no longer available in this form. Instead, one has the one-sided
anisotropic Cauchy-Schwarz inequalities
$$
-\|x\|_K\|-y\|_{K^*}
\le
x\cdot y
\le
\|x\|_K\|y\|_{K^*}.
$$
Let
$$
L=\operatorname{conv}\big(K\cup(-K)\big),
$$ where $\operatorname{conv}$ denotes the convex hull. Then
$
L^*=K^*\cap(-K^*)
$
and
$
\|y\|_{L^*}
=
\max\big\{
\|y\|_{K^*},
\|-y\|_{K^*}
\big\}.
$
Consequently,
$$
|x\cdot y|
\le
\|x\|_K\|y\|_{L^*}.
$$
Thus, the anisotropic radial derivative $\mathcal R_K$ remains the
natural object for the sharp radial CKN inequalities, while the norm
$\|\cdot\|_{L^*}$ provides a natural formulation for the corresponding
full-gradient inequalities in the non-symmetric setting.

The anisotropic $L^2$-CKN inequalities associated with the Minkowski
functional were studied in \cite{Wei}. In that work, the sharp constants,
attainability in the natural completion space, anisotropic Heisenberg-type
inequalities, and max-type anisotropic gradient inequalities were obtained in
the $L^2$ setting. The purpose of the present paper is to develop the
corresponding $L^p$ theory for $1<p<\infty$. The main new difficulty is
that the square completion used in the $L^2$ case has to be replaced by the
nonlinear remainder
$$
R_p(s,t)
=
|t|^p+(p-1)|s|^p-p|s|^{p-2}st,
\quad s,t\in\mathbb R.
$$
By the convexity of the function $t\mapsto |t|^p$, one has
$R_p(s,t)\ge0$, and equality holds if and only if $s=t$. This remainder
is the natural $L^p$ substitute for the square remainder in the $L^2$
theory. By choosing suitable radial weights in the identity and optimizing
the parameter, we obtain exact anisotropic $L^p$-Hardy and $L^p$-CKN
identities, sharp constants, and equality equations in the Minkowski
functional setting.

More precisely, let $K\subset\RN$ be a smooth
and strictly convex body containing the origin in its interior. In the parameter regions
$
b+1-a>0, b\le\frac{N-p}{p},
$
or
$
b+1-a<0, b\ge\frac{N-p}{p}.
$
Indeed,
\begin{align*}
N-1-(p-1)a-b
&=
N-p(b+1)+(p-1)(b+1-a),
\\
(p-1)a+b+1-N
&=
p(b+1)-N+(p-1)(a-b-1).
\end{align*}
Thus, the two parameter regions above guarantee the positivity of
the corresponding constants, although they are not necessary for
positivity.
We prove that
 for
$u\in C_c^\infty(\mathbb R^N\setminus\{o\})$,
\begin{align}\label{f70}
\!\!\bigg(
\int_{\mathbb R^N}
\frac{|\mathcal R_K(u)|^p}{\|x\|_K^{bp}}dx
\bigg)^{\frac1p}
\bigg(
\int_{\mathbb R^N}
\frac{|u|^p}{\|x\|_K^{ap}}dx
\bigg)^{\frac{p-1}{p}}
\ge
\frac{|N-1-(p-1)a-b|}{p}
\int_{\mathbb R^N}
\frac{|u|^p}{\|x\|_K^{(p-1)a+b+1}}dx.
\end{align}
We also note that \eqref{f70} is scale invariant.
Indeed, for
$
u_\tau(x)=u(\tau x), \tau>0,
$
both sides of \eqref{f70} are multiplied by
$
\tau^{(p-1)a+b+1-N}.
$
This also confirms the exponent
$(p-1)a+b+1$ in the weighted integral.
The constant $\frac{|N-1-(p-1)a-b|}{p}$ is sharp. When $a\ne b+1$, the equality equation
follows from the vanishing of the $R_p$-remainder and reduces to a
first-order equation in the anisotropic radial variable
$r=\|x\|_K$. Solving this equation gives the extremal functions.
Since these functions are generally not compactly supported, equality
is not attained in $C_c^\infty(\RN\setminus\{o\})$. We therefore work
in the completion space $\mathcal C_{K,a,b}^p$ associated with the
natural weighted radial norm
$$
\|u\|_{\mathcal C_{K,a,b}^p}
=
\bigg(
\int_{\RN}
\frac{|\mathcal R_K(u)|^p}{\|x\|_K^{bp}}dx
+
\int_{\RN}
\frac{|u|^p}{\|x\|_K^{ap}}dx
\bigg)^{\frac1p},
$$
and characterize the extremal functions in this completion space.
In the critical case $a=b+1$, which belongs to neither of the two
parameter regions above, inequality \eqref{f70} still holds with
the constant
$$
\frac{|N-p(b+1)|}{p},
$$
and is equivalent to the weighted anisotropic $L^p$-Hardy
inequality
$$
\int_{\RN}
\frac{|\mathcal R_K(u)|^p}{\|x\|_K^{bp}}dx
\ge
\bigg|\frac{N-p(b+1)}{p}\bigg|^p
\int_{\RN}
\frac{|u|^p}{\|x\|_K^{p(b+1)}}dx.
$$
The constant
$$
\bigg|\frac{N-p(b+1)}{p}\bigg|^p
$$
is sharp, but it is not attained by any nonzero function in
$\mathcal C_{K,b+1,b}^p$.

As a further consequence of the same identity method, we establish the sharp
anisotropic $L^p$-Heisenberg uncertainty principle associated with
$\|\cdot\|_K$. Combining the radial identity with convex symmetrization
tools, for nonnegative
$$
u\in\mathcal C
:=
W^{1,p}(\RN)\cap
\bigg\{
u:\int_{\RN}\hK{x}^{p'}|u|^pdx<\infty
\bigg\},
\quad p'=\frac{p}{p-1},
$$
we prove
$$
\int_{\mathbb R^N}
\|-\nabla u\|_{K^*}^pdx
+
(p-1)
\int_{\mathbb R^N}
\|x\|_K^{p'}u^pdx
\ge
N
\int_{\mathbb R^N}u^pdx,
$$
and
$$
\bigg(
\int_{\mathbb R^N}
\|-\nabla u\|_{K^*}^pdx
\bigg)^{\frac1p}
\bigg(
\int_{\mathbb R^N}
\|x\|_K^{p'}u^pdx
\bigg)^{\frac1{p'}}
\ge
\frac{N}{p}
\int_{\mathbb R^N}u^pdx.
$$
Both constants are sharp, and the optimizers are anisotropic Gaussian-type
functions generated by the Minkowski functional.

Finally, using the two-sided anisotropic Cauchy-Schwarz inequality
associated with
$$
L=\operatorname{conv}\big(K\cup(-K)\big),
\quad
L^*=K^*\cap(-K^*),
$$
we derive norm-based anisotropic gradient versions of the radial
inequalities, with the gradient term expressed through
$\|\nabla u\|_{L^*}$. In the non-symmetric case, we do not claim
sharpness of these gradient inequalities, since equality in the
gradient control need not be compatible with the $K$-radial
extremal functions. When $K$ is origin-symmetric, one has $L=K$,
and we prove that all the resulting anisotropic gradient constants
are sharp, including those in the critical case $a=b+1$.

The paper is organized as follows. In Section \ref{s2}, we collect basic
facts about convex bodies, the Minkowski functional, anisotropic
radial derivatives, the anisotropic polar formula, and convex
symmetrization. In Section \ref{s3}, we establish anisotropic $L^p$-Hardy
and $L^p$-CKN identities with nonnegative $R_p$-remainders. In
Section \ref{s4}, we first prove the sharp anisotropic $L^p$-Heisenberg
uncertainty principle and exhibit explicit optimizers. We then
establish the sharp anisotropic radial $L^p$-CKN inequalities,
characterize the extremal functions in the natural completion space
in the noncritical cases, and prove nonattainment in the critical
case. We also obtain the radial anisotropic $L^p$-Heisenberg
uncertainty principle as a consequence and derive norm-based
anisotropic gradient inequalities involving $\|\cdot\|_{L^*}$.
When $K$ is origin-symmetric, we further prove the sharpness of the
corresponding gradient inequalities, including the critical case.

\section{Preliminaries}\label{s2}

Throughout this paper, let $N\ge2$ and let $\RN$ be the $N$-dimensional
Euclidean space. We write $o$ for the origin of $\RN$, $x\cdot y$ for the
Euclidean inner product of $x,y\in\RN$, and
$
|x|=(x\cdot x)^{\frac12}
$
for the Euclidean norm. 
If $\Omega\subset\RN$ is an open set, then $C^1(\Omega)$ denotes the
space of real-valued functions on $\Omega$ whose first-order partial
derivatives exist and are continuous on $\Omega$. More generally,
$C^\infty(\Omega)$ denotes the space of real-valued smooth functions on
$\Omega$, and $C_c^\infty(\Omega)$ denotes the space of functions in
$C^\infty(\Omega)$ with compact support in $\Omega$. 

For $1\le p<\infty$, $L^p(\RN)$ denotes the usual Lebesgue space with norm
$$
\|u\|_p
=
\bigg(
\int_{\RN}|u(x)|^pdx
\bigg)^{\frac1p}.
$$
For $1<p<\infty$, we  denote by $W^{1,p}(\RN)$ the  first-order Sobolev space. Thus
$u\in W^{1,p}(\RN)$ if $u\in L^p(\RN)$ and its weak gradient
$
\nabla u=(\partial_1u,\dots,\partial_Nu)
$
belongs to $L^p(\RN;\RN)$.

A convex body is a compact convex subset $K\subset\RN$ with nonempty
interior. Let $\mathcal K$ be the class of all convex bodies in $\RN$,
let $\mathcal K_{(o)}$ be the class of convex bodies containing $o$ in
their interiors, and let $\mathcal K_{(o)}^c$ be the subclass of
$\mathcal K_{(o)}$ consisting of origin-symmetric convex bodies. For
$K\in\mathcal K$, the support function $h_K:\RN\to\mathbb R$ is defined by
$$
h_K(x)=\max\{x\cdot y:y\in K\}.
$$
We write $\partial K$ for the boundary of $K$. An outer unit normal vector
of $K$ at $y\in\partial K$ is a unit vector $\xi$ such that
$$
h_K(\xi)=\xi\cdot y.
$$
The convex body $K$ is called smooth if each boundary point has a unique
outer unit normal vector. It is called strictly convex if $\partial K$ does
not contain a nontrivial line segment.

For $K\in\mathcal K_{(o)}$, the polar body $K^*$ is defined by
$$
K^*
=
\{x\in\RN:x\cdot y\le1\ \text{for all }y\in K\}.
$$
Then $K^*\in\mathcal K_{(o)}$ and $(K^*)^*=K$. The Minkowski functional
of $K$ is defined by
$$
\hK{x}
=
\inf\{\lambda>0:x\in\lambda K\},
\quad x\in\RN.
$$
It is convex, continuous, positively one-homogeneous, and satisfies
$$
K=\{x\in\RN:\hK{x}\le1\}.
$$
Moreover,
\begin{align*}
\hK{x}=h_{K^*}(x),
\quad
\|x\|_{K^*}=h_K(x).
\end{align*}
If $K$ is not origin-symmetric, then $\|\cdot\|_K$ need not be even and
therefore need not be a norm. If $K\in\mathcal K_{(o)}^c$, then
$\|\cdot\|_K$ is a norm.

A function $u:\RN\to\mathbb{R}$ is said to be \textit{radially symmetric} with respect to $K\in\mathcal{K}_{(o)}$, if there exists a function, still denoted by $u$, such that $u(x)=u(\hK{x})$ for any $x\in \RN.$
For $K\in\mathcal K_{(o)}$ and $x\ne o$, let
$$
r=\hK{x},
\quad
\sigma_K(x)=\frac{x}{\hK{x}},
$$
then $\sigma_K(x)\in\partial K$, $\|\sigma_K(x)\|_K=1$, and
$
x=r\sigma_K(x).
$
The map
$$
\mathcal L_K:(0,\infty)\times\partial K\to\RN\setminus\{o\},
\quad
\mathcal L_K(r,\sigma)=r\sigma,
$$
defines the anisotropic polar coordinates associated with $K$.

The anisotropic polar formula can be expressed in terms of the cone
measure of $K$.

\begin{proposition}[Anisotropic polar formula]
\label{prop:anisotropic-polar-formula}
Let $K\in\mathcal K_{(o)}$. The cone measure $\mu_K$ on
$\partial K$ is defined by
$
\mu_K(\omega)
=
N
\big|
\{r\sigma:\sigma\in\omega,\ 0\le r\le1\}
\big|
$
for every Borel set $\omega\subset\partial K$. Then, for every
nonnegative measurable function $f$ on $\RN$,
\begin{align}
\int_{\RN}f(x)dx
=
\int_0^\infty
\int_{\partial K}
f(r\sigma)r^{N-1}d\mu_K(\sigma)dr.
\label{anisotropic-polar-formula}
\end{align}
If $K$ is smooth, then
$$
d\mu_K(\sigma)
=
\big(
\sigma\cdot\nu_K(\sigma)
\big)
d\mathcal H^{N-1}(\sigma),
$$
where $\nu_K(\sigma)$ denotes the outer unit normal to $K$ at
$\sigma\in\partial K$. Moreover,
$$
\mu_K(\partial K)
=
N|K|
\in(0,\infty).
$$
\end{proposition}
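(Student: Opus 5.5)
The plan is to derive the polar formula from the definition of $\mu_K$ by a measure-uniqueness argument. Everything will be obtained by pushing Lebesgue measure forward under the inverse polar map and identifying the result as a product measure.

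\emph{Step 1: the map $\mathcal L_K$.} The map $\mathcal L_K(r,\sigma)=r\sigma$ is a homeomorphism $(0,\infty)\times\partial K\to\RN\setminus\{o\}$. Its inverse is $x\mapsto(\hK{x},\sigma_K(x))$, which is continuous since $\hK{\cdot}$ is continuous and positive away from $o$. Hence Borel sets correspond under $\mathcal L_K$. Define a Borel measure $\nu$ on $(0,\infty)\times\partial K$ by $\nu(E)=|\mathcal L_K(E)|$.

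\emph{Step 2: $\nu$ on product rectangles.} For a Borel set $\omega\subset\partial K$ and $0\le s<t$, the set $\mathcal L_K((s,t]\times\omega)$ is the difference of the cones $tC_\omega$ and $sC_\omega$, where $C_\omega=\{r\sigma:\sigma\in\omega,0\le r\le1\}$. By homogeneity of Lebesgue measure,
\[
\nu((s,t]\times\omega)=(t^N-s^N)|C_\omega|=\frac{t^N-s^N}{N}\mu_K(\omega)=\int_s^t r^{N-1}dr\,\mu_K(\omega).
\]
The point $o$ and the sets $\{s\sigma:\sigma\in\omega\}\subset s\partial K$ are Lebesgue-null, so open versus closed endpoints do not matter. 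The boundary $\partial K$ is null because it is the boundary of a convex body.

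\emph{Step 3: uniqueness.} Rectangles $(s,t]\times\omega$ form a $\pi$-system generating the product $\sigma$-algebra. Both $\nu$ and $r^{N-1}dr\otimes\mu_K$ are $\sigma$-finite on the exhaustion $(0,n]\times\partial K$, since $\mu_K(\partial K)=N|K|<\infty$. The $\pi$-$\lambda$ theorem therefore gives equality of the two measures. Countable additivity of $\mu_K$ itself follows because disjoint $\omega$'s give cones intersecting only at $o$.

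\emph{Step 4: integrals.} Formula \eqref{anisotropic-polar-formula} follows for indicators, then for simple functions, then by monotone convergence for nonnegative measurable $f$. The value $\mu_K(\partial K)=N|C_{\partial K}|=N|K|$ is immediate.

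\emph{Step 5: the smooth case.} For the surface-measure representation, I first compute $|C_\omega|$ for $\omega$ a small relatively open piece of $\partial K$. This piece is a Lipschitz graph, since $\partial K$ is locally Lipschitz for a convex body. Apply the divergence theorem to the vector field $x$, which has divergence $N$, on the cone $C_\omega$. The lateral boundary of the cone consists of rays through $o$, so $x\cdot n=0$ there; the vertex contributes nothing. This yields
\[
N|C_\omega|=\int_\omega\sigma\cdot\nu_K(\sigma)\,d\mathcal H^{N-1}.
\]
The result then extends to all Borel $\omega$ by the same uniqueness argument. Alternatively, one can use the area formula for $\mathcal L_K$, whose Jacobian at $(r,\sigma)$ is $r^{N-1}\sigma\cdot\nu_K(\sigma)$.

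\emph{Main obstacle.} The main obstacle is justifying the divergence theorem on the cone, whose lateral boundary may be irregular. To handle it I would use the area-formula route instead. On a chart where $\partial K$ is a Lipschitz graph, $\mathcal L_K$ is locally Lipschitz. Its Jacobian is computed $\mathcal H^{N-1}$-a.e. as the normal component of the radial vector times $r^{N-1}$, and smoothness guarantees that $\nu_K$ is defined everywhere.
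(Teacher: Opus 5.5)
Your proposal is correct, but it runs in the opposite order from the paper.

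The paper treats smooth $K$ first. It applies the area formula to $\mathcal L_K(r,\sigma)=r\sigma$, with Jacobian $r^{N-1}\,\sigma\cdot\nu_K(\sigma)$, and obtains the polar formula and the density of $\mu_K$ at once. The agreement with the definition $N|C_\omega|$ is left implicit; it follows by testing on $f=\chi_{C_\omega}$. The paper then gets $\mu_K(\partial K)=N|K|$ from the divergence theorem for the field $x$ on $K$. It dismisses general $K\in\mathcal K_{(o)}$ in one sentence, by appeal to ``the measure-theoretic polar decomposition.''

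Your Steps 1--4 are exactly that decomposition, actually carried out. The dilation identity $|tC_\omega|=t^N|C_\omega|$ fixes $\nu$ on rectangles. The $\pi$--$\lambda$ theorem, with the exhaustion $(0,n]\times\partial K$, then identifies $\nu$ with $r^{N-1}dr\otimes\mu_K$, and no regularity of $\partial K$ is needed. Moreover, $\mu_K(\partial K)=N|K|$ becomes trivial from $C_{\partial K}=K$, so no divergence theorem is required there.

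Geometric measure theory enters only in Step 5. There your fallback, the area formula on Lipschitz charts, coincides with the paper's argument. You are right to distrust the divergence theorem on cones over arbitrary $\omega$. If you did want that route, restrict to $\omega=\partial K\cap\Gamma$ with $\Gamma$ a closed convex cone with nonempty interior. Then $C_\omega=K\cap\Gamma$ is a convex body, and the lateral boundary $K\cap\partial\Gamma$ contributes nothing because $x\cdot n=0$ there.

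Net effect: your route proves the general statement that the paper only asserts, and confines smoothness to the density formula. The paper's route reaches the density faster, but it relies on the area formula even for the polar formula itself.
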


\begin{proof}
Assume first that $K$ is smooth. Consider the anisotropic polar
coordinate map
$$
L_K:(0,\infty)\times\partial K
\longrightarrow
\RN\setminus\{o\},
\quad
L_K(r,\sigma)=r\sigma.
$$
Its Jacobian is
$
r^{N-1}
\big(
\sigma\cdot\nu_K(\sigma)
\big).
$
Hence, the area formula gives
\begin{align*}
\int_{\RN}f(x)dx
&=
\int_0^\infty
\int_{\partial K}
f(r\sigma)
r^{N-1}
\big(
\sigma\cdot\nu_K(\sigma)
\big)
d\mathcal H^{N-1}(\sigma)dr,
\end{align*}
which proves \eqref{anisotropic-polar-formula} and the density
formula for $\mu_K$.

Since $o\in\operatorname{int}K$, one has
$
\sigma\cdot\nu_K(\sigma)
=
h_K\big(\nu_K(\sigma)\big)
>0
$
for every $\sigma\in\partial K$. In addition, the divergence theorem
gives
\begin{align*}
\mu_K(\partial K)
=
\int_{\partial K}
\sigma\cdot\nu_K(\sigma)
d\mathcal H^{N-1}(\sigma)=
\int_K\operatorname{div}x dx
=
N|K|.
\end{align*}
The general case follows from the definition of the cone measure and
the corresponding measure-theoretic polar decomposition.
\end{proof}

We denote by
$
L^p(\partial K,d\mu_K)
$
the corresponding $L^p$-space, equipped with the norm
$$
\|\Phi\|_{L^p(\partial K,d\mu_K)}
=
\bigg(
\int_{\partial K}
|\Phi(\sigma)|^pd\mu_K(\sigma)
\bigg)^{\frac1p}.
$$

For a differentiable function $u$, the anisotropic radial derivative of $u$
with respect to $K$ is defined by
$$
\mathcal R_K(u)(x)
=
\nabla u(x)\cdot\sigma_K(x)
=
\nabla u(x)\cdot\frac{x}{\hK{x}},
\quad x\in\RN\setminus\{o\}.
$$
Equivalently, in anisotropic polar coordinates,
$$
\frac{d}{dr}u(r\sigma)
=
\nabla u(r\sigma)\cdot\sigma,
\quad r>0,\ \sigma\in\partial K.
$$
 When $K$ is the Euclidean unit ball, $\mathcal R_K(u)$ reduces
to the usual Euclidean radial derivative.

For later use, we also record the anisotropic Cauchy-Schwarz inequalities.
For $x,y\in\RN$, one has
\begin{align}\label{anisotropic-CS}
-\hK{x}\|-y\|_{K^*}
\le
x\cdot y
\le
\hK{x}\|y\|_{K^*}.
\end{align}
Indeed, if $x,y\ne o$, then
$
\sigma_K(x)\in\partial K,
\sigma_{K^*}(y):=\frac{y}{\|y\|_{K^*}}\in\partial K^*,
$
and hence
$$
\sigma_K(x)\cdot\sigma_{K^*}(y)\le1.
$$
This gives the right-hand side of \eqref{anisotropic-CS}; the left-hand side
follows by applying the same argument to $-y$.

For a set $E\subset\RN$, we denote by
$\operatorname{conv}(E)$ its convex hull. Define the
origin-symmetric convex body
$$
L
=
\operatorname{conv}\big(
K\cup(-K)
\big).
$$
Then
$$
L^*
=
K^*\cap(-K^*).
$$
Since $L^*$ is origin-symmetric, its Minkowski functional is a norm.
Moreover, the Minkowski functional of an intersection is the maximum
of the corresponding Minkowski functionals. Therefore,
\begin{align*}
\|y\|_{L^*}
=
\max\big\{
\|y\|_{K^*},
\|y\|_{-K^*}
\big\}
=
\max\big\{
\|y\|_{K^*},
\|-y\|_{K^*}
\big\}.
\end{align*}
Consequently, \eqref{anisotropic-CS} yields
\begin{align}
|x\cdot y|
\le
\|x\|_K\|y\|_{L^*},
\quad
x,y\in\RN.
\label{two-sided-anisotropic-CS}
\end{align}
If $K$ is origin-symmetric, then
$$
L=K,
\quad
L^*=K^*,
$$
and \eqref{two-sided-anisotropic-CS} reduces to the usual
anisotropic Cauchy-Schwarz inequality.

Let $\Ksc\subset\mathcal K_{(o)}$ be the class of convex bodies that are
smooth and strictly convex. If $K\in\Ksc$, then $K^*\in\Ksc$ and
$\hK{x}\in C^1(\RN\setminus\{o\})$. Moreover, for $x\in\RN\setminus\{o\}$, one has
$
x\cdot\nabla\hK{x}
=
\hK{x}$ and $
\big\|\nabla\hK{x}\big\|_{K^*}=1.
$
In particular, $\sigma_K(x)\in C^1(\RN\setminus\{o\})$.

We next recall the anisotropic symmetrization associated with $K$. This
will be used in Section \ref{s4}. For a measurable set $E\subset\RN$ with
$0<|E|<\infty$, where $|E|$ denotes its Lebesgue measure, the
$K$-symmetral of $E$ is defined by
$$
E^K=r_EK,
\quad
r_E=
\bigg(\frac{|E|}{|K|}\bigg)^{\frac1N}.
$$
Equivalently, up to a set of Lebesgue measure zero,
$$
E^K
=
\{x\in\RN:\hK{x}<r_E\}.
$$
Clearly, $|E^K|=|E|$.

Let $u$ be a measurable function on $\RN$ such that
$$
\mu_u(t):=|\{x\in\RN:|u(x)|>t\}|<\infty,
\quad t>0.
$$
The decreasing rearrangement of $u$ is defined by
$$
u^*(s)
=
\inf\{t>0:\mu_u(t)\le s\},
\quad s>0.
$$
The rearrangement of $u$ associated with $K$ is defined by
$$
u^K(x)
=
u^*\big(|K|\cdot\hK{x}^N\big),
\quad x\in\RN\setminus\{o\}.
$$
The value of $u^K$ at the origin is irrelevant and may be chosen arbitrarily.
By construction, $u^K$ is non-negative and radially symmetric with respect
to $K$.

The following elementary properties follow from the definition of $u^K$.
\begin{proposition}[\cite{VanSchaftingen}]\label{p:rearrangement-properties}
Let $K\in\mathcal K_{(o)}$, and  $u$ be a measurable function on
$\RN$ such that $\mu_u(t)<\infty$ for  $t>0$. Then the following
properties hold.
\begin{enumerate}
\item[(i)] $|u|$ and $u^K$ are equimeasurable, namely
$$
|\{x\in\RN:|u(x)|>t\}|
=
|\{x\in\RN:u^K(x)>t\}|
\quad\text{for }t>0.
$$

\item[(ii)] For $t>0$,
$$
\{x\in\RN:u^K(x)>t\}
=
\{x\in\RN:|u(x)|>t\}^K
$$
up to a set of Lebesgue measure zero.

\item[(iii)] For $q>0$,
$$
(|u|^q)^K=(u^K)^q
\quad\text{a.e. in }\RN.
$$

\item[(iv)] If $E\subset\RN$ is measurable and $0<|E|<\infty$, then
$$
(\chi_E)^K=\chi_{E^K}
\quad\text{a.e. in }\RN,
$$
where $\chi_E$ denotes the characteristic function of $E$.
\end{enumerate}
\end{proposition}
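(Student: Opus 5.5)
The four properties all come from one observation. Every superlevel set of $u^K$ is a sublevel set of $\hK{\cdot}$, that is, a dilate of $K$, and its measure is read off from the decreasing rearrangement $u^*$. So I would first establish (ii), and then derive (i), (iii) and (iv) from it.

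\emph{Step 1 (level sets of $u^K$).} Fix $t>0$. I would use the standard facts about $u^*$: it is nonincreasing and right-continuous on $(0,\infty)$, and $u^*(s)>t$ if and only if $s<\mu_u(t)$. Right-continuity of $\mu_u$ gives this equivalence. Indeed, if $s<\mu_u(t)$, then $\mu_u(t')>s$ for all $t'$ slightly larger than $t$, so $u^*(s)>t$. Conversely, if $s\ge\mu_u(t)$, then $t$ belongs to the set in the infimum, so $u^*(s)\le t$. Applying this with $s=|K|\hK{x}^N$ gives, for $x\ne o$,
$$
u^K(x)>t \iff |K|\hK{x}^N<\mu_u(t) \iff \hK{x}<\bigg(\frac{\mu_u(t)}{|K|}\bigg)^{\frac1N}.
$$
If $\mu_u(t)>0$, this set is exactly $\{\hK{x}<r_E\}$ with $E=\{|u|>t\}$, which is $E^K$ up to a null set (the origin and $\partial(r_EK)$). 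I would justify that $|\partial(r_EK)|=0$ by the anisotropic polar formula, or simply because the boundary of a convex body is null. If $\mu_u(t)=0$, both sides are null sets. This proves (ii).

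\emph{Step 2 (consequences).} Property (i) follows from (ii) and $|E^K|=|E|$, using $|\{\hK{x}<r\}|=r^N|K|$ by homogeneity. For (iii), the map $\phi(\tau)=\tau^q$ is strictly increasing on $[0,\infty)$, so $\{|u|^q>t\}=\{|u|>t^{1/q}\}$ and $\{(u^K)^q>t\}=\{u^K>t^{1/q}\}$. Hence both $(|u|^q)^K$ and $(u^K)^q$ have superlevel sets $\{|u|>t^{1/q}\}^K$ for every $t>0$, up to null sets. Two nonnegative functions whose superlevel sets agree a.e. for every rational $t>0$ agree a.e., since $\{f>g\}\subset\bigcup_{t\in\mathbb Q}\{f>t\}\setminus\{g>t\}$. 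For (iv), apply (ii) to $u=\chi_E$. For $0<t<1$ the superlevel set of $\chi_E$ is $E$, and for $t\ge1$ it is empty. So $(\chi_E)^K$ takes the value $1$ a.e. on $E^K$ and $0$ a.e. outside it, which means $(\chi_E)^K=\chi_{E^K}$ a.e.

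\emph{Expected obstacle.} The only delicate point is the equivalence $u^*(s)>t\iff s<\mu_u(t)$, because strict versus non-strict inequalities at jumps and flat pieces of $\mu_u$ must be handled correctly. I would settle it carefully via right-continuity of $\mu_u$, which holds by continuity from below of Lebesgue measure applied to the increasing union $\{|u|>t\}=\bigcup_n\{|u|>t+1/n\}$. Everything else is bookkeeping with null sets. Nothing about smoothness or symmetry of $K$ is needed, only $K\in\mathcal K_{(o)}$ and the homogeneity of $\hK{\cdot}$.
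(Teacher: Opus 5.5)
Your proposal is correct. The key equivalence $u^*(s)>t \iff s<\mu_u(t)$, which you obtain from the right-continuity of $\mu_u$, identifies $\{u^K>t\}$ with $\{x:\|x\|_K<(\mu_u(t)/|K|)^{1/N}\}$ and gives (ii); (i), (iii) and (iv) then follow as you describe. The paper gives no proof of its own: it cites Van Schaftingen and says these properties follow from the definition of $u^K$, which is exactly the verification you carry out.
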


If, in addition, $K$ is origin-symmetric, then $\|\cdot\|_K$ is a norm
and the above construction reduces to the usual convex symmetrization
associated with $K$; see
\cite{AFTL97,Ferone-Volpicelli,VanSchaftingen}.

We use the following anisotropic P\'olya-Szeg\"{o} principle for convex
rearrangements.

\begin{lemma}[\cite{Ferone-Volpicelli,VanSchaftingen}]\label{l2}
Let $p\in(1,\infty)$, $K\in\mathcal K_{(o)}$, and
$u\in W^{1,p}(\RN)$. Then
\begin{align}\label{f54}
\int_{\RN}\|-\nabla u(x)\|_{K^*}^pdx
\ge
\int_{\RN}\|-\nabla u^K(x)\|_{K^*}^pdx.
\end{align}
Moreover, if $u\ge0$ and
$$
\big|
\{x:|\nabla u^K(x)|=0\}
\cap
\{x:0<u^K(x)<\operatorname*{ess\,sup}u\}
\big|
=0,
$$
where $\operatorname*{ess\,sup}u$ denotes the essential supremum of
$u$ on $\RN$, then equality holds in \eqref{f54} if and only if there
exists $x_0\in\RN$ such that
$$
u(x)=u^K(x+x_0)
\quad\text{a.e. on }\RN.
$$
\end{lemma}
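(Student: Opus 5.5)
The plan is to run the classical Talenti / Brothers–Ziemer argument, with the Euclidean isoperimetric inequality replaced by the anisotropic (Wulff) isoperimetric inequality. For a set $E$ of finite perimeter, define
$$
P_K(E)=\int_{\partial^*E}h_K(\nu_E)\,d\mathcal H^{N-1}=\int_{\partial^*E}\|\nu_E\|_{K^*}\,d\mathcal H^{N-1}.
$$
The Wulff inequality then reads
$$
P_K(E)\ge N|K|^{\frac1N}|E|^{\frac{N-1}{N}},
$$
with equality if and only if $E$ is a translate of a dilate of $K$, up to null sets. For $E=rK$ the outer normal at $r\sigma$ is $\nu_K(\sigma)$, and Proposition \ref{prop:anisotropic-polar-formula} gives
$$
P_K(rK)=r^{N-1}\mu_K(\partial K)=N|K|r^{N-1},
$$
so the level sets of $u^K$ are exactly the Wulff minimizers.

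First I reduce to $u\in C_c^\infty(\RN)$ by density and the lower semicontinuity of $u\mapsto\int\|-\nabla u^K\|_{K^*}^p$. This rests on the $L^p$-nonexpansivity of the rearrangement, which follows from Proposition \ref{p:rearrangement-properties}.

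Next I use the coarea formula. For $t>0$, write $E_t=\{|u|>t\}$. On the part of $\partial E_t$ where $u>0$, the outer normal is $-\nabla u/|\nabla u|$, so the integrand is $\|-\nabla u\|_{K^*}$. On the part where $u<0$, the outer normal is $\nabla u/|\nabla u|$, so the integrand is $\|\nabla u\|_{K^*}$. This is the one point where non-symmetry appears. I handle it with the weighted functional
$$
\int_{\partial E_t}\|\nu\|_{K^*}\,\frac{\|\mp\nabla u\|^{p-1}}{\cdots}\,d\mathcal H^{N-1},
$$
where the sign matches the local sign of $u$, together with the key observation that $\|-\nabla u\|_{K^*}$ coincides with the correct weight $h_K(\nu)$ only on $\{u>0\}$. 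To avoid this mismatch I instead split $u=u^+-u^-$. Then:
\begin{itemize}
\item For $u^+$ I use $h_K$.
\item For $u^-$ I note that $\|-\nabla(-u^-)\|_{K^*}=\|\nabla u^-\|_{K^*}=\|-\nabla u^-\|_{(-K)^*}$, and apply the Wulff inequality for $-K$.
\item The two Wulff constants agree, because $|-K|=|K|$.
\end{itemize}
Summing the two contributions level by level, I obtain for a.e.\ $t$
$$
\int_{\{|u|=t\}}h_K(\nu)\,d\mathcal H^{N-1}\ \ge\ N|K|^{\frac1N}\mu_u(t)^{\frac{N-1}{N}}\ =\ P_K(\{u^K>t\}).
$$

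With this per-level estimate, the standard chain finishes the inequality. Set
$$
\phi(t)=\int_{\{|u|>t\}}\|\mp\nabla u\|_{K^*}^p\,dx .
$$
By coarea,
$$
-\mu_u'(t)\ge\int_{\{|u|=t\}}\frac{d\mathcal H^{N-1}}{|\nabla u|}.
$$
Applying Hölder on $\{|u|=t\}$ gives
$$
P_K(\{|u|>t\})\le\big(-\phi'(t)\big)^{\frac1p}\big(-\mu_u'(t)\big)^{\frac{p-1}{p}}.
$$
Combining with the previous display yields
$$
-\phi'(t)\ge\big(N|K|^{\frac1N}\mu_u(t)^{\frac{N-1}{N}}\big)^p\big(-\mu_u'(t)\big)^{1-p}.
$$
For $u^K$, whose level sets are the sets $rK$ and whose gradient has constant anisotropic size on each of them, every step is an equality. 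Integrating in $t$ gives \eqref{f54}. The singular part of $\mu_u$, which appears when $\nabla u$ vanishes on sets of positive measure, only helps the inequality.

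For the equality case, let $u\ge0$. Equality forces equality in Wulff for a.e.\ $t$, so each $\{u>t\}$ is a translate $x_t+r(t)K$. It also forces equality in Hölder, so $\|-\nabla u\|_{K^*}$ is constant on a.e.\ level set. The non-degeneracy hypothesis $|\{\nabla u^K=0\}\cap\{0<u^K<\operatorname*{ess\,sup}u\}|=0$ makes $\mu_u$ absolutely continuous, hence makes $t\mapsto r(t)$ continuous and strictly monotone. The main obstacle is showing that the centres $x_t$ are independent of $t$. I would follow Brothers–Ziemer: first, the nesting $\{u>t\}\supset\{u>s\}$ for $t<s$ combined with constancy of $\|-\nabla u\|_{K^*}$ on level sets shows that $t\mapsto x_t$ is locally Lipschitz. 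Then computing $\nabla u$ on $\partial\{u>t\}$ in terms of $r'(t)$ and $x_t'$ shows that any nonzero $x_t'$ would make $\|-\nabla u\|_{K^*}$ non-constant on that level set, by strict convexity and smoothness of $K$. This gives $x_t\equiv -x_0$, and therefore $u(x)=u^K(x+x_0)$. The converse direction is immediate from translation invariance.
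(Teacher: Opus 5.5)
The paper gives no proof of this lemma; it only cites Ferone--Volpicelli and Van Schaftingen. Your route (Talenti's level-set chain with the Wulff inequality, then Brothers--Ziemer for the equality case) is the standard one for results of this type.

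\textbf{Inequality.} This half is sound, but the non-symmetric bookkeeping is not consistent as written. If the sign in $\|\mp\nabla u\|_{K^*}$ follows the sign of $u$, then $\phi(0)=\int\|-\nabla|u|\|_{K^*}^p\,dx$, which is not the left side of \eqref{f54} when $K\neq-K$. If instead $\phi(t)=\int_{\{|u|>t\}}\|-\nabla u\|_{K^*}^p\,dx$, then H\"older controls not $P_K(\{|u|>t\})$ but
$$
\int_{\{|u|=t\}}\frac{\|-\nabla u\|_{K^*}}{|\nabla u|}\,d\mathcal H^{N-1}
=P_K(\{u>t\})+P_{-K}(\{u<-t\})
\ge N|K|^{\frac1N}\Big(|\{u>t\}|^{\frac{N-1}{N}}+|\{u<-t\}|^{\frac{N-1}{N}}\Big).
$$
You then still need the subadditivity of $s\mapsto s^{(N-1)/N}$, which you did not state, to reach $N|K|^{1/N}\mu_u(t)^{(N-1)/N}$. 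With this version the rest of the chain goes through.

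\textbf{Equality.} This half has genuine gaps.

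First, your last step invokes smoothness and strict convexity of $K$. The lemma allows every $K\in\mathcal K_{(o)}$, polytopes included, so as written it does not prove the stated result. Neither property is needed:
\begin{enumerate}
\item At $y=x_t+r(t)\sigma$ with $\sigma$ a smooth point of $\partial K$ ($\mathcal H^{N-1}$-a.e.), the implicit computation gives $\|-\nabla u\|_{K^*}=\big(|r'(t)|-\xi(\sigma)\cdot x_t'\big)^{-1}$, where $\xi(\sigma)=\nu_K(\sigma)/h_K(\nu_K(\sigma))$.
\item Constancy on the level set therefore means $\nu_K\cdot x_t'=c\,h_K(\nu_K)$ a.e.\ on $\partial K$.
\item The divergence theorem, with $\int_{\partial K}\nu_K\,d\mathcal H^{N-1}=0$ and $\int_{\partial K}\sigma\cdot\nu_K\,d\mathcal H^{N-1}=N|K|$, gives $c=0$.
\item Then $\int_{\partial K}(\sigma\cdot x_t')(\nu_K\cdot x_t')\,d\mathcal H^{N-1}=|K|\,|x_t'|^2$ forces $x_t'=0$.
\end{enumerate}

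Second, local Lipschitz continuity of $t\mapsto x_t$ is not justified. Nesting gives $x_s-x_t+r(s)K\subset r(t)K$, i.e.\ $\|x_s-x_t\|_K\le r(t)-r(s)$ for $t<s$. But $r=(\mu_u/|K|)^{1/N}$ is only absolutely continuous; it is Lipschitz only where $\|-\nabla u\|_{K^*}$ stays bounded below on level sets. Absolute continuity is what you actually get, and it is exactly what is needed to pass from $x_t'=0$ a.e.\ to $x_t$ constant. The same bound gives $x_t'\in|r'(t)|K$, which keeps the denominator above nonnegative.

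Third, your initial density reduction says nothing about equality, since equality is not preserved under approximation. The whole chain has to be run directly for the given $u\in W^{1,p}(\RN)$, as Brothers and Ziemer do. That means working with:
\begin{itemize}
\item Federer's coarea formula;
\item $-\nabla u/|\nabla u|$ as the measure-theoretic outer normal on reduced boundaries;
\item the Fonseca--M\"uller equality case of the Wulff inequality for sets of finite perimeter;
\item the pointwise gradient formula at $\mathcal H^{N-1}$-a.e.\ point of a.e.\ level set.
\end{itemize}
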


We also need the following weighted form of the anisotropic Hardy-Littlewood
inequality for convex symmetrization.

\begin{proposition}[\cite{Wei}]\label{p1}
Let $p>0$, $K\in\mathcal K_{(o)}$, and  $u$ be a non-negative
measurable function on $\RN$.
\begin{enumerate}
\item[(i)] If $\omega:[0,\infty)\to[0,\infty)$ is non-increasing, then
\begin{align}\label{f39}
\int_{\RN}u(x)^p\omega(\hK{x})dx
\le
\int_{\RN}(u^K(x))^p\omega(\hK{x})dx.
\end{align}
If $\omega$ is strictly decreasing and equality holds in \eqref{f39}, then
$u=u^K$ a.e. on $\RN$.

\item[(ii)] If $v:[0,\infty)\to[0,\infty)$ is non-decreasing, then
\begin{align}\label{f35}
\int_{\RN}u(x)^pv(\hK{x})dx
\ge
\int_{\RN}(u^K(x))^pv(\hK{x})dx.
\end{align}
If $v$ is strictly increasing and equality holds in \eqref{f35}, then
$u=u^K$ a.e. on $\RN$.
\end{enumerate}
\end{proposition}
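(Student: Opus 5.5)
The plan is to reduce everything to one-dimensional statements along the rays of anisotropic polar coordinates, using Proposition \ref{prop:anisotropic-polar-formula}, and then to apply the classical one-dimensional bathtub/Hardy–Littlewood principle to the distribution function of $u$. Write $\omega(\hK{x})$ for the weight. By the layer-cake representation,
\begin{align*}
u(x)^p\omega(\hK{x})=\int_0^\infty\int_0^\infty \chi_{\{u^p>t\}}(x)\,\chi_{\{\omega>s\}}(\hK{x})\,ds\,dt .
\end{align*}
Integrating over $x$ and using Fubini reduces (i) to the set inequality
\begin{align*}
|E\cap F_s|\le |E^K\cap F_s|,
\end{align*}
where $E=\{u^p>t\}$ and $F_s=\{x:\omega(\hK{x})>s\}$. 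Because $\omega$ is non-increasing, $F_s$ is, up to a null set, a dilate $\rho_sK$ (possibly open, closed, empty, or all of $\RN$). Two sets of the form $E^K=r_EK$ and $\rho_sK$ are nested, so
\begin{align*}
|E^K\cap F_s|=\min\{|E|,|F_s|\}\ge |E\cap F_s|.
\end{align*}
Proposition \ref{p:rearrangement-properties}(ii),(iii) give $\{(u^K)^p>t\}=\{u^p>t\}^K$ a.e. Integrating back in $s,t$ then yields \eqref{f39}. If $E$ has infinite measure for some $t$, both sides are infinite or the argument is unchanged. For (ii), I will argue the same way with $v$ non-decreasing: the superlevel sets of $v(\hK{\cdot})$ are complements of dilates of $K$, so
\begin{align*}
|E\cap G_s|\ge |E|-|\rho_sK|=|E^K\cap G_s|,
\end{align*}
after first writing $v=\lim v_n$ or truncating to handle infinite integrals. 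The reverse inequality follows. Alternatively, when the integrals are finite, (ii) can be derived from (i) applied to $\omega=M-v$ on bounded truncations.

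For the equality cases, take $\omega$ strictly decreasing. Then $F_s$ runs through all dilates $\rho K$ with $\rho$ ranging over a dense set of radii as $s$ varies. Equality in \eqref{f39}, with finite integrals, forces
\begin{align*}
|E\cap\rho K|=\min\{|E|,|\rho K|\}
\end{align*}
for a.e. $t$ and a.e. relevant $s$. Taking $\rho\uparrow r_E$ and $\rho\downarrow r_E$ gives $|E\setminus r_EK|=0$ and $|r_EK\setminus E|=0$, so $E=E^K$ a.e. Hence $\{u>t\}=\{u^K>t\}$ for a.e. $t$, and the sets agree for all $t$ by monotone limits of level sets. This gives $u=u^K$ a.e. The strictly increasing case is symmetric.

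The main obstacle is the equality characterization. One has to make sure the finiteness of the integrals is used so that "equality of integrals" really yields pointwise-in-$(s,t)$ equality of the nonnegative integrand difference; this is the step where Fubini is applied to a nonnegative difference. One also has to handle radii at which $\omega$ jumps, since there $F_s$ cannot realize every dilate. Strict monotonicity guarantees that the image of $s\mapsto\rho_s$ is dense in the range of radii, which is enough for the limiting argument. Some care with the convention at $\rho=0$ or $\rho=\infty$ is needed, but it is routine.
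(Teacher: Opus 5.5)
The paper gives no proof of this proposition; it is quoted from \cite{Wei}. Your argument is an independent, self-contained route, and it is correct up to minor repairs.

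\textbf{What your route buys.} After the double layer-cake decomposition and Tonelli, everything reduces to comparing $|E_t\cap\rho K|$ with $\min\{|E_t|,|\rho K|\}$. Here $E_t=\{u^p>t\}$, and $\{(u^K)^p>t\}=E_t^K$ a.e.\ by Proposition~\ref{p:rearrangement-properties}(ii) applied at level $t^{1/p}$. This uses only three facts: the sublevel sets of $\|\cdot\|_K$ are the dilates $\rho K$, $|\partial(\rho K)|=0$, and $E_t^K$ is such a dilate. Consequently:
\begin{itemize}
\item it needs neither origin-symmetry nor a general Hardy--Littlewood rearrangement inequality;
\item it works for every $p>0$;
\item it handles infinite integrals directly, since Tonelli is valid in $[0,\infty]$ and $|E_t|<\infty$ for $t>0$ is exactly the standing hypothesis under which $u^K$ is defined. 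Your detours through $v=\lim v_n$ or $\omega=M-v$ can therefore be dropped.
\end{itemize}
It also shows precisely where finiteness enters the equality case, which the citation leaves implicit.

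\textbf{Three points to tighten.}

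First, in (ii) the identity should read $|E^K\cap G_s|=|E|-\min\{|E|,|\rho_sK|\}$, not $|E|-|\rho_sK|$, which can be negative. The inequality $|E\cap G_s|=|E|-|E\cap\rho_sK|\ge|E|-\min\{|E|,|\rho_sK|\}$ then uses $|E|<\infty$.

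Second, in the equality case, density of the radii $\rho_s$ is not the property you need. You only know $|E_t\cap\rho_sK|=\min\{|E_t|,|\rho_sK|\}$ for a.e.\ $s$. What you need is that for each $r_0>0$ and $0<\delta<r_0$ the set $\{s:\rho_s\in[r_0-\delta,r_0+\delta]\}$ has positive measure. Strict monotonicity gives this: that set contains the nondegenerate interval $[\omega(r_0+\delta),\omega(r_0-\delta))$ (resp.\ $[v(r_0-\delta),v(r_0+\delta))$).

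Third, your restriction to a finite common value is not a weakness of your proof but a necessary correction of the statement. Take $u(x)=\min\{1,|x-x_0|^{-N/p}\}$ with $|x_0|>1$. Then $\mu_u(t)<\infty$ for all $t>0$ but $\int_{\RN}u^p\,dx=\infty$. With $\omega(r)=1+e^{-r}$ or $v(r)=r$, both sides of \eqref{f39}, resp.\ \eqref{f35}, equal $+\infty$, yet $u<1=u^K$ near $o$. In the paper's only use of the equality clause (Theorem~\ref{thm:gradient-Heisenberg}, with $v(r)=r^{p'}$) the integral is finite because $u\in\mathcal C$, so nothing downstream is affected.
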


\section{Anisotropic \texorpdfstring{$L^p$}{Lp}-Hardy and \texorpdfstring{$L^p$}{Lp}-CKN identities on \texorpdfstring{$\RN$}{RN}}\label{s3}
In this section, we establish the anisotropic $L^p$-identity associated
with the Minkowski functional. This identity is the basic tool for the
sharp anisotropic $L^p$-CKN inequalities proved
in the next section. The main point is to replace the Euclidean radial
derivative by the anisotropic radial derivative.
Throughout this section, let 
$$
r=\|x\|_K,
\quad
\sigma_K(x)=\frac{x}{\|x\|_K},
\quad x\in \mathbb R^N\setminus\{o\}.
$$
Given $A,\psi\in C^1(0,\infty)$, we use the same symbols $A$ and
$\psi$ for the radial functions
$$
x\longmapsto A(\|x\|_K)
\quad\text{and}\quad
x\longmapsto\psi(\|x\|_K)
$$
on $\RN\setminus\{o\}$. Thus, when no confusion can arise, we write
$A$ and $\psi$ instead of $A(\|x\|_K)$ and
$\psi(\|x\|_K)$, respectively. Since $K\in\Ksc$ in the results
below, one has
$$
\|\cdot\|_K\in C^1(\RN\setminus\{o\}),
$$
and hence these radial functions belong to
$C^1(\RN\setminus\{o\})$.

Let $1<p<\infty$. For $s,t\in\mathbb R$, define
$$
R_p(s,t)
=
|t|^p+(p-1)|s|^p-p|s|^{p-2}st.
$$
By the convexity of the function $t\mapsto |t|^p$, one has
$
R_p(s,t)\ge 0
$
for all $s,t\in\mathbb R$. Moreover,
$
R_p(s,t)=0$
if and only if
$s=t.
$
When $p=2$, this remainder reduces to
$
R_2(s,t)=|t-s|^2.
$ Consequently, when $p=2$, the identities and inequalities below
reduce to the corresponding $L^2$ results in \cite{Wei}.
Thus $R_p$ is the natural nonlinear substitute for the square
completion used in the $L^2$ theory. Therefore, the following anisotropic $L^p$-identity holds.

\begin{theorem}\label{t1}
Let $1<p<\infty$, $\alpha>0$, $K\in\Ksc$, and
$A,\psi\in C^1(0,\infty)$. Assume, in addition, that
$
A|\psi|^{p-2}\psi\in C^1(0,\infty).
$
Let
$
u\in C_c^\infty(\mathbb R^N\setminus\{o\}).
$
Then
\begin{align*}
&\alpha^p
\int_{\mathbb R^N}
A|\mathcal R_K(u)|^pdx
+
\frac{p-1}{\alpha^{\frac{p}{p-1}}}
\int_{\mathbb R^N}
A|\psi|^p|u|^pdx\\
&=
-\int_{\mathbb R^N}
\operatorname{div}
\big(
A|\psi|^{p-2}\psi\sigma_K(x)
\big)
|u|^pdx
+
\int_{\mathbb R^N}
A
R_p\big(
\alpha^{-\frac{1}{p-1}}u\psi,
\alpha\mathcal R_K(u)
\big)dx .
\end{align*}
\end{theorem}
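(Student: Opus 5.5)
The plan is to expand the remainder term pointwise and then integrate by parts once, using the compact support of $u$ away from the origin. By the definition of $R_p$, with $s=\alpha^{-\frac1{p-1}}u\psi$ and $t=\alpha\mathcal R_K(u)$, I will compute each piece:
\begin{align*}
|t|^p&=\alpha^p|\mathcal R_K(u)|^p,\qquad
(p-1)|s|^p=\frac{p-1}{\alpha^{\frac{p}{p-1}}}|\psi|^p|u|^p,\\
p|s|^{p-2}st&=p\,\alpha^{-\frac{p-2}{p-1}}\alpha^{-\frac1{p-1}}\alpha\,|u|^{p-2}u\,|\psi|^{p-2}\psi\,\mathcal R_K(u)
=p|u|^{p-2}u\,|\psi|^{p-2}\psi\,\mathcal R_K(u).
\end{align*}
The key check is that the powers of $\alpha$ cancel in the cross term: $-\frac{p-2}{p-1}-\frac1{p-1}+1=0$. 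Multiplying by $A$ and integrating, the claimed identity is therefore equivalent to
$$
\int_{\RN}A|\psi|^{p-2}\psi\,p|u|^{p-2}u\,\mathcal R_K(u)\,dx
=-\int_{\RN}\operatorname{div}\big(A|\psi|^{p-2}\psi\,\sigma_K(x)\big)|u|^p\,dx .
$$
All integrals are finite, since $u$ has compact support in $\RN\setminus\{o\}$ and the weights are continuous there.

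To prove this reduced identity, I will use that $|u|^p\in C^1$ for $p>1$, with $\nabla|u|^p=p|u|^{p-2}u\nabla u$. Hence
$$
p|u|^{p-2}u\,\mathcal R_K(u)=\nabla(|u|^p)\cdot\sigma_K(x).
$$
I then set $V(x)=A(\|x\|_K)|\psi(\|x\|_K)|^{p-2}\psi(\|x\|_K)\,\sigma_K(x)$. This vector field is $C^1$ on $\RN\setminus\{o\}$, because $A|\psi|^{p-2}\psi\in C^1(0,\infty)$ by hypothesis, $\|\cdot\|_K\in C^1(\RN\setminus\{o\})$, and $\sigma_K\in C^1(\RN\setminus\{o\})$ for $K\in\Ksc$ (Section~\ref{s2}). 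The divergence theorem applied to $V|u|^p$, which is compactly supported in $\RN\setminus\{o\}$, gives $\int V\cdot\nabla|u|^p\,dx=-\int \operatorname{div}(V)|u|^p\,dx$, and this is exactly the reduced identity.

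The only delicate point is regularity. Since $\psi$ is not assumed to keep a fixed sign, the factor $|\psi|^{p-2}\psi$ alone need not be $C^1$ when $1<p<2$. This is precisely why the theorem assumes that the combined product $A|\psi|^{p-2}\psi$ is $C^1$, and the argument only ever differentiates that product. Similarly, $|u|^p$ is $C^1$ even where $u=0$, which justifies the chain rule used above. If one preferred not to rely on $\sigma_K\in C^1$, an alternative route would be the anisotropic polar formula of Proposition~\ref{prop:anisotropic-polar-formula} combined with one-dimensional integration by parts in $r$. That route, however, would require identifying $\operatorname{div}(f(r)\sigma_K)=f'(r)+(N-1)f(r)/r$, which is not needed for the theorem as stated.
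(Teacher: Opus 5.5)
Your proof is correct and follows the paper's argument: expand $R_p$ pointwise, where the powers of $\alpha$ cancel in the cross term, and then integrate by parts once using $\nabla|u|^p=p|u|^{p-2}u\nabla u$ and the compact support of $u$ in $\RN\setminus\{o\}$. Your remarks on the $C^1$ regularity of the vector field $A|\psi|^{p-2}\psi\,\sigma_K$ and of $|u|^p$ spell out what the paper leaves implicit.
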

\begin{proof}
Since $u\in C_c^\infty(\mathbb R^N\setminus\{o\})$, integration by
parts gives
$$
-\int_{\mathbb R^N}\operatorname{div}(A|\psi|^{p-2}\psi\sigma_K)|u|^pdx
=
\int_{\mathbb R^N}A|\psi|^{p-2}\psi\sigma_K\cdot\nabla |u|^pdx=
p\int_{\mathbb R^N}
A|\psi|^{p-2}\psi|u|^{p-2}u\mathcal R_K(u)dx.
$$

On the other hand, one has
\begin{align*}
R_p\big(
\alpha^{-\frac{1}{p-1}}u\psi,
\alpha\mathcal R_K(u)
\big)
 =
\alpha^p|\mathcal R_K(u)|^p
+
(p-1)\alpha^{-\frac{p}{p-1}}|\psi|^p|u|^p
-
p|\psi|^{p-2}\psi|u|^{p-2}u\mathcal R_K(u).
\end{align*}
Multiplying this identity by $A$ and integrating over
$\mathbb R^N$, we get
\begin{align*}
&\int_{\mathbb R^N}
A
R_p\big(
\alpha^{-\frac{1}{p-1}}u\psi,
\alpha\mathcal R_K(u)
\big)dx
\\
&=
\alpha^p
\int_{\mathbb R^N}
A|\mathcal R_K(u)|^pdx
+
\frac{p-1}{\alpha^{\frac{p}{p-1}}}
\int_{\mathbb R^N}
A|\psi|^p|u|^pdx
-
p\int_{\mathbb R^N}
A|\psi|^{p-2}\psi|u|^{p-2}u\mathcal R_K(u)dx
\\
&=
\alpha^p
\int_{\mathbb R^N}
A|\mathcal R_K(u)|^pdx
+
\frac{p-1}{\alpha^{\frac{p}{p-1}}}
\int_{\mathbb R^N}
A|\psi|^p|u|^pdx
+
\int_{\mathbb R^N}
\operatorname{div}
\big(
A|\psi|^{p-2}\psi\sigma_K(x)
\big)
|u|^pdx.
\end{align*}
Rearranging the terms gives the desired identity.
\end{proof}

The additional regularity assumption $
A|\psi|^{p-2}\psi\in C^1(0,\infty)
$ in Theorem \ref{t1} is automatic for $p\ge2$
and is satisfied by all the choices of $A$ and $\psi$ used below.

\begin{lemma}\label{lem:radial-divergence}
Let $K\in\Ksc$ and let $f\in C^1(0,\infty)$. Then, with
$r=\|x\|_K$, one has
$$
\operatorname{div}\big(
f(r)\sigma_K(x)
\big)
=
f'(r)
+
\frac{N-1}{r}f(r),
\quad
x\in\RN\setminus\{o\}.
$$
\end{lemma}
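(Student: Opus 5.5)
We compute the divergence with the product rule, splitting $f(r)\sigma_K(x)=\frac{f(r)}{r}\,x$ with $r=\|x\|_K$:
$$
\operatorname{div}\big(f(r)\sigma_K(x)\big)
=
\nabla\Big(\frac{f(r)}{r}\Big)\cdot x+\frac{f(r)}{r}\operatorname{div}x .
$$
Since $K\in\Ksc$, the map $x\mapsto\|x\|_K$ is $C^1$ on $\RN\setminus\{o\}$, as recorded in Section \ref{s2}. Hence $g(r):=f(r)/r$ is $C^1$ in $x$ away from the origin, and the chain rule gives $\nabla g(r)=g'(r)\nabla\|x\|_K$.

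The key step is the Euler identity $x\cdot\nabla\|x\|_K=\|x\|_K$ for the positively one-homogeneous function $\|\cdot\|_K$, also stated in Section \ref{s2}. It yields $\nabla g(r)\cdot x=g'(r)\,r$. Together with $\operatorname{div}x=N$, we get
$$
\operatorname{div}\big(f(r)\sigma_K(x)\big)=r\,g'(r)+N g(r).
$$

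Now substitute $g'(r)=\frac{f'(r)}{r}-\frac{f(r)}{r^2}$. This gives $r\,g'(r)=f'(r)-\frac{f(r)}{r}$, and therefore
$$
r\,g'(r)+N g(r)=f'(r)+\frac{N-1}{r}f(r),
$$
which is the claimed formula.

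There is no real obstacle. The only point needing care is regularity: we need $\|x\|_K\in C^1(\RN\setminus\{o\})$ so that the classical product and chain rules apply pointwise. This is exactly where smoothness and strict convexity of $K$ enter. No symmetry of $K$ is needed, because Euler's identity uses only positive homogeneity.

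Alternatively, one could verify the formula weakly using the anisotropic polar formula (Proposition \ref{prop:anisotropic-polar-formula}): integrate against test functions and use $\frac{d}{dr}\varphi(r\sigma)=\nabla\varphi\cdot\sigma$. The direct computation above is simpler.
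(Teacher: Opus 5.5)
Your proof is correct and follows the paper's argument exactly: rewrite $f(r)\sigma_K(x)=g(r)x$ with $g(r)=f(r)/r$, then use $\operatorname{div}x=N$ and Euler's identity $x\cdot\nabla\|x\|_K=\|x\|_K$ to get $Ng(r)+rg'(r)$, and finally substitute $g'$. One small correction: $\|\cdot\|_K=h_{K^*}$ is already $C^1$ on $\RN\setminus\{o\}$ when $K$ is merely smooth, because smoothness of $K$ makes $K^*$ strictly convex, so strict convexity of $K$ is not where this lemma's regularity comes from.
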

\begin{proof}
Let
$
f(r)\sigma_K(x)
=
g(r)x,\
g(r)=\frac{f(r)}r.
$
Since
$
x\cdot\nabla\|x\|_K
=
\|x\|_K
=
r,
$
then
$$
\operatorname{div}\big(g(r)x\big)
=
Ng(r)
+
g'(r)x\cdot\nabla\|x\|_K=
Ng(r)+rg'(r).
$$
Since
$$
g'(r)
=
\frac{f'(r)}r
-
\frac{f(r)}{r^2},
$$
it follows that
\begin{align*}
Ng(r)+rg'(r)
=
\frac{Nf(r)}r
+
f'(r)
-
\frac{f(r)}r=
f'(r)
+
\frac{N-1}{r}f(r).
\end{align*}
This proves the result.
\end{proof}

\begin{remark}
The formula in Lemma \ref{lem:radial-divergence} coincides with the
usual Euclidean radial divergence formula. In particular, this
explains why the sharp constants in the radial inequalities below
depend only on $N$, $p$, $a$, and $b$, and not on the convex body
$K$.
\end{remark}

\subsection{Anisotropic \texorpdfstring{$L^p$}{Lp}-Hardy identities on \texorpdfstring{$\RN$}{RN}}
Taking $\alpha=1$ in Theorem \ref{t1} gives the following anisotropic $L^p$-Hardy-type identity and inequality.

\begin{theorem}\label{Hardy-identity}
Let $1<p<\infty$, $A,\psi\in C^1(0,\infty)$, $K\in\Ksc$, and
$u\in C_c^\infty(\mathbb R^N\setminus\{o\})$. 
Assume, in addition, that
$
A|\psi|^{p-2}\psi\in C^1(0,\infty).
$
Then
\begin{align*}
&\int_{\mathbb R^N}
A|\mathcal R_K(u)|^pdx
-
\int_{\mathbb R^N}
\bigg[
-\operatorname{div}
\big(
A|\psi|^{p-2}\psi\sigma_K(x)
\big)
-(p-1)A|\psi|^p
\bigg]
|u|^pdx\notag\\
&=
\int_{\mathbb R^N}
A R_p\big(u\psi,\mathcal R_K(u)\big)dx.
\end{align*}
In particular, if $A\ge0$, then
\begin{align*}
\int_{\mathbb R^N}
A|\mathcal R_K(u)|^pdx
\ge
\int_{\mathbb R^N}
\bigg[
-\operatorname{div}
\big(
A|\psi|^{p-2}\psi\sigma_K(x)
\big)
-(p-1)A|\psi|^p
\bigg]
|u|^pdx.
\end{align*}
\end{theorem}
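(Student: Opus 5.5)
The identity follows by specializing Theorem \ref{t1} to $\alpha=1$. With this choice $\alpha^p=1$ and $\alpha^{-\frac{p}{p-1}}=1$, so the identity of Theorem \ref{t1} reads
\begin{align*}
\int_{\RN}A|\mathcal R_K(u)|^pdx+(p-1)\int_{\RN}A|\psi|^p|u|^pdx
=-\int_{\RN}\operatorname{div}\big(A|\psi|^{p-2}\psi\sigma_K(x)\big)|u|^pdx
+\int_{\RN}AR_p\big(u\psi,\mathcal R_K(u)\big)dx.
\end{align*}
The hypotheses of Theorem \ref{t1}, namely $A,\psi\in C^1(0,\infty)$, $A|\psi|^{p-2}\psi\in C^1(0,\infty)$, $K\in\Ksc$ and $u\in C_c^\infty(\RN\setminus\{o\})$, are exactly those assumed here. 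All integrals are finite, since $u$ is supported in a compact subset of $\RN\setminus\{o\}$ on which the radial functions are continuous.

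Next I move the terms $(p-1)\int A|\psi|^p|u|^p$ and $-\int\operatorname{div}(\cdots)|u|^p$ to the left-hand side and group them into a single integral against $|u|^p$. The bracketed weight is
$$-\operatorname{div}\big(A|\psi|^{p-2}\psi\sigma_K\big)-(p-1)A|\psi|^p,$$
and this yields the stated identity. If desired, the divergence can be made explicit through Lemma \ref{lem:radial-divergence} with $f=A|\psi|^{p-2}\psi$, but the statement does not require this.

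For the inequality, I use that $R_p(s,t)\ge0$ for all $s,t\in\mathbb R$, which follows from the convexity of $t\mapsto|t|^p$ as recorded before Theorem \ref{t1}. If $A\ge0$, the right-hand side $\int_{\RN}AR_p(u\psi,\mathcal R_K(u))dx$ is therefore nonnegative, and dropping it gives the claimed lower bound. The argument is a direct specialization with no real obstacle; the only point to check is the bookkeeping of signs when rearranging.
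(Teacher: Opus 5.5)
Your proposal is correct and matches the paper's argument: the paper also obtains this theorem by taking $\alpha=1$ in Theorem~\ref{t1}, rearranging the terms, and using $R_p\ge0$ together with $A\ge0$ for the inequality. Your sign bookkeeping in the rearrangement is right.
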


We first derive several Hardy-type consequences of Theorem \ref{Hardy-identity}
by choosing suitable functions $A$ and $\psi$.

\begin{corollary}\label{Hardy-Heisenberg-cor} Let $1<p<\infty$, $K\in\Ksc$, and $p'=\frac{p}{p-1}. $ Then, for  $u\in C_c^\infty(\RN\setminus\{o\})$, one has \begin{align}\label{f7}
\int_{\RN}\!\!
|\mathcal R_K(u)|^pdx
\!=\!
N\int_{\RN}\!|u|^pdx
\!-\!
(p\!-\!1)\int_{\RN}\!
\hK{x}^{p'}|u|^pdx
\!+\!\!
\int_{\RN}\!\!
R_p\bigg(
\!\!-\!u\hK{x}^{\frac{1}{p-1}},
\mathcal R_K(u)
\bigg)dx.
\end{align}
\end{corollary}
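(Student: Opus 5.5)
This identity follows by applying Theorem \ref{Hardy-identity} with $A\equiv1$ and
$$
\psi(r)=-r^{\frac{1}{p-1}},\quad r=\hK{x}.
$$
With this choice the remainder term is exactly $R_p\big(-u\hK{x}^{\frac1{p-1}},\mathcal R_K(u)\big)$, as in \eqref{f7}. The regularity hypotheses hold. First, $A,\psi\in C^1(0,\infty)$. Second, $|\psi|^{p-2}\psi=-\big(r^{\frac1{p-1}}\big)^{p-1}=-r$, which is smooth on $(0,\infty)$, so the extra assumption $A|\psi|^{p-2}\psi\in C^1(0,\infty)$ of Theorem \ref{Hardy-identity} is satisfied.

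It remains to compute the bracketed weight in Theorem \ref{Hardy-identity}. We have
$$
A|\psi|^{p-2}\psi\,\sigma_K(x)=-r\,\sigma_K(x)=-x.
$$
Applying Lemma \ref{lem:radial-divergence} with $f(r)=-r$ (or directly, $\operatorname{div}x=N$) gives
$$
-\operatorname{div}\big(A|\psi|^{p-2}\psi\sigma_K\big)=1+\frac{N-1}{r}\,r=N.
$$
In addition,
$$
(p-1)A|\psi|^p=(p-1)r^{\frac{p}{p-1}}=(p-1)\hK{x}^{p'}.
$$
So the weight multiplying $|u|^p$ equals $N-(p-1)\hK{x}^{p'}$.

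Substituting these into the identity of Theorem \ref{Hardy-identity} and moving the weighted term to the right-hand side gives \eqref{f7}. All integrals are finite because $u$ has compact support away from $o$.

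There is no substantive obstacle here. The only points needing care are the sign choice $\psi<0$, which makes the divergence term equal to $+N$ rather than $-N$, and checking that $|\psi|^{p-2}\psi$ is $C^1$ when $1<p<2$. Both are settled by the explicit form $|\psi|^{p-2}\psi=-r$.
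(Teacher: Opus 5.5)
Your proposal is correct and is the paper's proof. Both take $A=1$ and $\psi=-\|x\|_K^{\frac1{p-1}}$ in Theorem \ref{Hardy-identity}, note that $A|\psi|^{p-2}\psi=-r$, and use Lemma \ref{lem:radial-divergence} to get $\operatorname{div}\big(-r\sigma_K(x)\big)=-N$. Your extra checks, namely the $C^1$ hypothesis when $1<p<2$ and $(p-1)A|\psi|^p=(p-1)\|x\|_K^{p'}$, are accurate and are details the paper leaves implicit.
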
 
\begin{proof}
Taking $A=1$ and $\psi=-\hK{x}^{\frac{1}{p-1}}$ in Theorem \ref{Hardy-identity}.
Then
$
A|\psi|^{p-2}\psi
=
-r.
$
By Lemma \ref{lem:radial-divergence},
$$
\operatorname{div}
\big(
A|\psi|^{p-2}\psi\sigma_K(x)
\big)
=
\operatorname{div}\big(-r\sigma_K(x)\big)
=
-N.
$$
The conclusion follows from Theorem \ref{Hardy-identity}.
\end{proof}

\begin{lemma}\label{lem:Heisenberg-extension}
Let $1<p<\infty$, $p'=\frac{p}{p-1}$, and $K\in\Ksc$.
Suppose that $u\in W^{1,p}(\RN)$ and
$$
\int_{\RN}
\|x\|_K^{p'}|u|^pdx
<\infty.
$$
Then
\begin{align}
&\int_{\RN}
|\mathcal R_K(u)|^pdx
+
(p-1)
\int_{\RN}
\|x\|_K^{p'}|u|^pdx
-
N
\int_{\RN}
|u|^pdx
=
\int_{\RN}
R_p\big(
-u\|x\|_K^{\frac1{p-1}},
\mathcal R_K(u)
\big)dx.
\label{extended-Heisenberg-identity}
\end{align}
Consequently,
\begin{align}
\int_{\RN}
|\mathcal R_K(u)|^pdx
+
(p-1)
\int_{\RN}
\|x\|_K^{p'}|u|^pdx
\ge
N
\int_{\RN}
|u|^pdx.
\label{extended-Heisenberg-inequality}
\end{align}
Equality holds in \eqref{extended-Heisenberg-inequality} if and only if
$$
\mathcal R_K(u)
=
-\|x\|_K^{\frac1{p-1}}u
$$
almost everywhere in $\RN$.
\end{lemma}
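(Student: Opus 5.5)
The plan is to extend identity \eqref{f7} of Corollary \ref{Hardy-Heisenberg-cor} from $C_c^\infty(\RN\setminus\{o\})$ to the class in the statement by approximation. Fatou's lemma will handle the remainder term. Write $X$ for the class of $u\in W^{1,p}(\RN)$ with $\int\|x\|_K^{p'}|u|^pdx<\infty$, equipped with the norm
$$
\|u\|_X^p=\|\nabla u\|_p^p+\|u\|_p^p+\int\|x\|_K^{p'}|u|^pdx.
$$

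\textbf{Density of $C_c^\infty(\RN\setminus\{o\})$ in $X$.} I will argue in three stages.
\begin{enumerate}
\item Truncation at infinity: take $u_R=u\,\eta(x/R)$ with a standard cutoff $\eta$. Then $\|u_R-u\|_X\to0$ by dominated convergence, and the gradient error is $O(R^{-1})\|u\|_{L^p(R<|x|<2R)}$.
\item Mollification: for compactly supported functions this converges in $W^{1,p}$, and the weight $\|x\|_K^{p'}$ is bounded on compact sets, so the weighted term also converges.
\item Removal of the origin: use $\zeta_\varepsilon(x)=1-\eta(x/\varepsilon)$. Since $N\ge2$, the point $\{o\}$ has zero $p$-capacity for $p\le N$. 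For $p>N$, the approximating function $u$ is smooth, so $|u\nabla\zeta_\varepsilon|\lesssim|u(o)|/\varepsilon$ on $B_{2\varepsilon}$, which gives an error of order $\varepsilon^{N/p-1}$. That does not tend to zero when $p>N$, so I would instead use the logarithmic cutoff (for $p\le N$) or first subtract the value $u(o)$.
\end{enumerate}
The $p>N$ case is the delicate point. A simpler route is to avoid density in $X$ altogether and argue as in the next paragraph.

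\textbf{Direct integration by parts.} All terms of \eqref{extended-Heisenberg-identity} except the cross term are finite for $u\in X$. Note that
$$
|\mathcal R_K(u)|\le \|\sigma_K\|\,|\nabla u|\le C|\nabla u|,
$$
and the cross term is $p\int|u|^{p-2}u\,\|x\|_K^{\frac1{p-1}}\mathcal R_K(u)\,dx$ (this is the computation for $\psi=-r^{1/(p-1)}$). By H\"older, using $\|x\|_K^{\frac{1}{p-1}}|u|^{p-1}\in L^{p'}$ because $\|x\|_K^{p'}|u|^p\in L^1$, the cross term is absolutely integrable. Hence the remainder is integrable, and the identity is equivalent to
$$
-p\int|u|^{p-2}u\,\|x\|_K^{\frac1{p-1}}\mathcal R_K(u)\,dx=N\int|u|^pdx,
$$
which is the statement $\int r\sigma_K\cdot\nabla|u|^p\,dx=-N\int|u|^pdx$. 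I will prove this for $|u|^p\in W^{1,1}_{loc}$, where $\nabla|u|^p=p|u|^{p-2}u\nabla u$ by the chain rule for $W^{1,p}$ functions. The vector field $x=r\sigma_K$ is smooth with divergence $N$ (Lemma \ref{lem:radial-divergence}). Multiply by the cutoff $\eta(x/R)$ and integrate by parts; no singularity occurs at the origin since the field is $x$ itself. The boundary error is
$$
\int|u|^p\,x\cdot\nabla\eta(x/R)\,dx=O\Big(\int_{R<|x|<2R}|u|^p\,dx\Big)\to0,
$$
and the main term converges by dominated convergence, because $|x||\nabla|u|^p|\le p|x||u|^{p-1}|\nabla u|\in L^1$ by the H\"older estimate above together with the equivalence of $\|x\|_K$ and $|x|$. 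The identity on $C_c^\infty$ needs a local approximation to justify the integration by parts for $W^{1,p}$ functions, but that is standard. This route also avoids the capacity issue entirely, so I will adopt it.

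\textbf{Inequality and equality case.} The inequality \eqref{extended-Heisenberg-inequality} follows from $R_p\ge0$. If equality holds, the remainder integral vanishes, so $R_p=0$ a.e. Since $R_p(s,t)=0$ if and only if $s=t$, this gives $\mathcal R_K(u)=-\|x\|_K^{\frac1{p-1}}u$ a.e. Conversely, this relation forces the remainder to vanish, so equality holds. The main obstacle is the integrability bookkeeping that justifies splitting the remainder integral into separately finite pieces, and this is handled by the H\"older estimate above.
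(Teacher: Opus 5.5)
Your adopted route (direct integration by parts, after dropping the density approach) is essentially the paper's proof. Both expand $R_p$ pointwise, use H\"older to get $x\cdot\nabla|u|^p\in L^1(\RN)$, integrate by parts against $\eta(x/R)$ and let $R\to\infty$ by dominated convergence to get $\int_{\RN}x\cdot\nabla|u|^p\,dx=-N\int_{\RN}|u|^p\,dx$, and conclude from $R_p\ge0$ and $R_p(s,t)=0\iff s=t$.

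One slip to fix: for $\psi=-\|x\|_K^{1/(p-1)}$ one has $|\psi|^{p-2}\psi=-\|x\|_K$, so the cross term is $p\int_{\RN}|u|^{p-2}u\,\|x\|_K\,\mathcal R_K(u)\,dx$, not the version with $\|x\|_K^{1/(p-1)}$. The H\"older step should therefore use $\|x\|_K|u|^{p-1}\in L^{p'}$, whose $p'$-th power is exactly $\|x\|_K^{p'}|u|^p$; the $p'$-th power of your $\|x\|_K^{1/(p-1)}|u|^{p-1}$ is $\|x\|_K^{p/(p-1)^2}|u|^p$, so the stated deduction does not follow. Your later reformulation via $r\sigma_K=x$ and your bound on $|x|\,|\nabla|u|^p|$ already use the correct power, so the argument goes through once this is corrected.
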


\begin{proof}
Since $\sigma_K$ is bounded on $\RN\setminus\{o\}$, one has
$\mathcal R_K(u)\in L^p(\RN)$. Since
$
\nabla|u|^p
=
p|u|^{p-2}u\nabla u
$
almost everywhere, then,
\begin{align}
R_p\big(
-u\|x\|_K^{\frac1{p-1}},
\mathcal R_K(u)
\big)
=
|\mathcal R_K(u)|^p
+
(p-1)\|x\|_K^{p'}|u|^p
+
x\cdot\nabla|u|^p.
\label{pointwise-Heisenberg-identity}
\end{align}
Moreover, H\"older's inequality gives
\begin{align*}
\int_{\RN}
\big|x\cdot\nabla|u|^p\big|dx
&=
p
\int_{\RN}
\|x\|_K|u|^{p-1}|\mathcal R_K(u)|dx
\nonumber\\
&\le
p
\bigg(
\int_{\RN}
\|x\|_K^{p'}|u|^pdx
\bigg)^{\frac1{p'}}
\bigg(
\int_{\RN}
|\mathcal R_K(u)|^pdx
\bigg)^{\frac1p}
<\infty.
\end{align*}

Let $\eta\in C_c^\infty(\RN)$ satisfy
$$
0\le\eta\le1,\quad
\eta(x)=1\ \text{if }|x|\le1,\quad
\eta(x)=0\ \text{if }|x|\ge2,
$$
and let
$$
\eta_R(x)=\eta\bigg(\frac{x}{R}\bigg)\ \text{for}\ R>1.
$$
Since $|u|^p\in W^{1,1}(\RN)$, integration by parts yields
\begin{align*}
\int_{\RN}
\eta_Rx\cdot\nabla|u|^pdx
&=
-N
\int_{\RN}
\eta_R|u|^pdx
-
\int_{\RN}
x\cdot\nabla\eta_R|u|^pdx.
\end{align*}
The functions $x\cdot\nabla\eta_R$ are uniformly bounded and are
supported in
$
\{x\in\RN:R\le |x|\le2R\}.
$ Therefore,
$$
\int_{\RN}
x\cdot\nabla\eta_R|u|^pdx
\rightarrow0
$$
as $R\to\infty$. By dominated convergence,
$$
\int_{\RN}
x\cdot\nabla|u|^pdx
=
-N
\int_{\RN}
|u|^pdx.
$$
Integrating \eqref{pointwise-Heisenberg-identity} proves
\eqref{extended-Heisenberg-identity}.

Since $R_p(s,t)\ge0$, inequality
\eqref{extended-Heisenberg-inequality} follows. Finally,
$$
R_p(s,t)=0
\quad\Longleftrightarrow\quad
s=t,
$$
which proves the equality statement.
\end{proof}

\begin{corollary}\label{weighted-Hardy-identity} Let $1<p<\infty$, $\lambda\in\mathbb R$, and $K\in\Ksc$. Then, for  $u\in C_c^\infty(\RN\setminus\{o\})$, one has \begin{align*}
\int_{\mathbb R^N}
\frac{|\mathcal R_K(u)|^p}{\|x\|_K^\lambda}dx
-\!
\bigg|\frac{N-\lambda-p}{p}\bigg|^p
\int_{\mathbb R^N}
\frac{|u|^p}{\|x\|_K^{\lambda+p}}dx
=\!
\int_{\mathbb R^N}
\frac{1}{\|x\|_K^\lambda}
R_p\bigg(
-\frac{N-\lambda-p}{p}
\frac{u}{\|x\|_K},
\mathcal R_K(u)
\bigg)dx.
\end{align*} 
\end{corollary}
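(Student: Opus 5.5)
We apply Theorem \ref{Hardy-identity} with the choices
$$
A(r)=r^{-\lambda},\qquad \psi(r)=-\frac{N-\lambda-p}{p}\,r^{-1},\qquad r=\|x\|_K .
$$
Both functions belong to $C^1(0,\infty)$. Write $c=\frac{N-\lambda-p}{p}$. Then
$$
A|\psi|^{p-2}\psi=-|c|^{p-2}c\,r^{-\lambda-p+1}.
$$
This is a power of $r$, so it is also in $C^1(0,\infty)$ and the hypothesis of Theorem \ref{Hardy-identity} holds. With these choices the remainder term is exactly
$$
\frac{1}{\|x\|_K^\lambda}R_p\Big(-c\frac{u}{\|x\|_K},\mathcal R_K(u)\Big),
$$
which is the right-hand side of the claimed identity.

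Next we compute the bracketed weight. By Lemma \ref{lem:radial-divergence} with $f(r)=-|c|^{p-2}c\,r^{1-\lambda-p}$,
$$
-\operatorname{div}\big(A|\psi|^{p-2}\psi\sigma_K\big)=|c|^{p-2}c\,\big[(1-\lambda-p)+(N-1)\big]r^{-\lambda-p}=|c|^{p-2}c\,(N-\lambda-p)\,r^{-\lambda-p}.
$$
Since $N-\lambda-p=pc$, this equals $p|c|^p r^{-\lambda-p}$. The other term is
$$
(p-1)A|\psi|^p=(p-1)|c|^p r^{-\lambda-p}.
$$
The bracket is therefore $\big(p-(p-1)\big)|c|^p r^{-\lambda-p}=|c|^p\|x\|_K^{-\lambda-p}$. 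Substituting into Theorem \ref{Hardy-identity} gives the stated identity.

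The only point needing care is the degenerate case $c=0$, where $\psi\equiv0$. The expression $|\psi|^{p-2}\psi$ is then interpreted as $0$, which is consistent for every $p>1$. Both sides reduce to $\int \|x\|_K^{-\lambda}|\mathcal R_K(u)|^p\,dx$, because $R_p(0,t)=|t|^p$. We also check the sign convention: the minus sign in $\psi$ is what turns the product $|c|^{p-2}c\cdot c$ into $|c|^p$ rather than $-|c|^p$. The rest is routine.
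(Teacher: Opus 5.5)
Your proposal is correct and matches the paper's proof: the paper makes the same choice $A=r^{-\lambda}$, $\psi=-c/r$ with $c=\frac{N-\lambda-p}{p}$ in Theorem \ref{Hardy-identity}, and computes the divergence via Lemma \ref{lem:radial-divergence}. You also write out the bracket computation $p|c|^p-(p-1)|c|^p=|c|^p$ and the degenerate case $c=0$ (using the convention $|s|^{p-2}s=0$ at $s=0$), both of which the paper leaves implicit.
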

\begin{proof}
Let
$
c
=
\frac{N-\lambda-p}{p}.
$
Taking
$
A=r^{-\lambda},
\
\psi=-\frac{c}{r}.
$
Then
\begin{align*}
A|\psi|^{p-2}\psi
=
-r^{-\lambda}
\bigg|\frac{c}{r}\bigg|^{p-2}
\frac{c}{r}=
-|c|^{p-2}c
r^{1-\lambda-p}.
\end{align*}
By Lemma \ref{lem:radial-divergence},
\begin{align*}
\operatorname{div}
\big(
A|\psi|^{p-2}\psi\sigma_K(x)
\big)
=
-|c|^{p-2}c
\big(
N-\lambda-p
\big)
r^{-\lambda-p}=
-p|c|^p r^{-\lambda-p}.
\end{align*}
The result follows from Theorem \ref{Hardy-identity}.
\end{proof}

\begin{corollary}\label{c3}
Let $1<p<\infty$, $K\in\Ksc$, $a,b\in\mathbb R$, and
$u\in C_c^\infty(\mathbb R^N\setminus\{o\})$.
Then
\begin{align}\label{f16}
&\int_{\mathbb R^N}
\frac{|\mathcal R_K(u)|^p}{\|x\|_K^{bp}}dx
+
(p-1)
\int_{\mathbb R^N}
\frac{|u|^p}{\|x\|_K^{ap}}dx
-
\big(N-1-(p-1)a-b\big)
\int_{\mathbb R^N}
\frac{|u|^p}{\|x\|_K^{(p-1)a+b+1}}dx
\nonumber
\\
&=
\int_{\mathbb R^N}
\frac{1}{\|x\|_K^{bp}}
R_p\big(
u\|x\|_K^{b-a},
-\mathcal R_K(u)
\big)dx.
\end{align}
If
$
N-1-(p-1)a-b>0,
$
then, since $R_p\ge0$, one obtains 
\begin{align}\label{f4-1}
\int_{\mathbb R^N}
\frac{|\mathcal R_K(u)|^p}{\|x\|_K^{bp}}dx
+
(p-1)
\int_{\mathbb R^N}
\frac{|u|^p}{\|x\|_K^{ap}}dx
\ge
\big(N-1-(p-1)a-b\big)
\int_{\mathbb R^N}
\frac{|u|^p}{\|x\|_K^{(p-1)a+b+1}}dx.
\end{align}
In particular, if
$
b+1-a>0,
b\le\frac{N-p}{p},
$
then
\begin{align*}
N-1-(p-1)a-b
=
N-p(b+1)+(p-1)(b+1-a)>0.
\end{align*}
\end{corollary}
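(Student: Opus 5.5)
The plan is to obtain \eqref{f16} as a direct specialization of Theorem \ref{Hardy-identity}, exactly as in Corollaries \ref{Hardy-Heisenberg-cor} and \ref{weighted-Hardy-identity}. With $r=\|x\|_K$, I will choose
$$
A=r^{-bp},\qquad \psi=-r^{b-a}.
$$
Both are $C^1$ on $(0,\infty)$. The product $A|\psi|^{p-2}\psi=-r^{-bp}r^{(b-a)(p-1)}=-r^{-(p-1)a-b}$ is a pure power, so it is also $C^1(0,\infty)$ and the extra hypothesis of Theorem \ref{Hardy-identity} holds.

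Next I compute the two weights appearing in Theorem \ref{Hardy-identity}.

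\emph{The $(p-1)A|\psi|^p$ term.} We have $A|\psi|^p=r^{-bp}r^{(b-a)p}=r^{-ap}$. This produces the term $(p-1)\int|u|^p\|x\|_K^{-ap}\,dx$.

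\emph{The divergence term.} Lemma \ref{lem:radial-divergence} applied to $f(r)=-r^{m}$ with $m=-(p-1)a-b$ gives
$$
\operatorname{div}\big(A|\psi|^{p-2}\psi\,\sigma_K\big)=-(m+N-1)\,r^{m-1}=-\big(N-1-(p-1)a-b\big)\,r^{-(p-1)a-b-1}.
$$

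Inserting these into the identity of Theorem \ref{Hardy-identity} and moving the $(p-1)$-term to the left-hand side yields the left-hand side of \eqref{f16}. The remainder term is $\int r^{-bp}R_p(-u r^{b-a},\mathcal R_K(u))\,dx$. Since $R_p(s,t)$ depends only on $|t|^p$, $|s|^p$ and the product $st$, it satisfies $R_p(-s,-t)=R_p(s,t)$. Hence this remainder equals $\int r^{-bp}R_p(u r^{b-a},-\mathcal R_K(u))\,dx$, which is the right-hand side of \eqref{f16}.

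Inequality \eqref{f4-1} then follows immediately from $R_p\ge0$. The condition $N-1-(p-1)a-b>0$ is only used to state the bound with a positive constant.

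The final claim is an algebraic check. Expanding
$$
N-p(b+1)+(p-1)(b+1-a)=N-pb-p+pb+p-b-1-(p-1)a=N-1-(p-1)a-b
$$
gives the identity. Under $b\le\frac{N-p}{p}$ the first summand $N-p(b+1)$ is nonnegative, and under $b+1-a>0$ the second summand $(p-1)(b+1-a)$ is strictly positive because $p>1$. So the sum is strictly positive.

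There is no real obstacle here. The only points needing care are the sign bookkeeping in the divergence computation and the symmetry $R_p(-s,-t)=R_p(s,t)$, which is what reconciles the choice $\psi=-r^{b-a}$ with the form of the remainder stated in \eqref{f16}.
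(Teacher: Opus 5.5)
Your proposal is correct and follows the paper's proof. You use the same choice $A=r^{-bp}$, $\psi=-r^{b-a}$ in Theorem \ref{Hardy-identity} and compute the divergence via Lemma \ref{lem:radial-divergence}. Your explicit use of the symmetry $R_p(-s,-t)=R_p(s,t)$, which matches the remainder to the stated form $R_p\big(u\|x\|_K^{b-a},-\mathcal R_K(u)\big)$, fills in a step the paper leaves implicit.
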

\begin{proof}
Taking
$
A=r^{-bp},
\
\psi=-r^{b-a}.
$
Then
$
A|\psi|^{p-2}\psi
=
-r^{-bp+(b-a)(p-1)}=
-r^{-(p-1)a-b}.
$
By Lemma \ref{lem:radial-divergence},
\begin{align*}
\operatorname{div}
\big(
A|\psi|^{p-2}\psi\sigma_K(x)
\big)
=
-\big(
N-1-(p-1)a-b
\big)
r^{-(p-1)a-b-1}.
\end{align*}
Therefore, Theorem \ref{Hardy-identity} gives
\eqref{f16}. Since
$
R_p(s,t)\ge0,
$
inequality \eqref{f4-1} follows.
\end{proof}

\begin{corollary}\label{c4}
Let $1<p<\infty$, $K\in\Ksc$, $a,b\in\mathbb R$, and
$u\in C_c^\infty(\mathbb R^N\setminus\{o\})$.
Then
\begin{align}\label{f17}
&\int_{\mathbb R^N}
\frac{|\mathcal R_K(u)|^p}{\|x\|_K^{bp}}dx
+
(p-1)
\int_{\mathbb R^N}
\frac{|u|^p}{\|x\|_K^{ap}}dx
-
\big((p-1)a+b+1-N\big)
\int_{\mathbb R^N}
\frac{|u|^p}{\|x\|_K^{(p-1)a+b+1}}dx
\nonumber
\\
&=
\int_{\mathbb R^N}
\frac{1}{\|x\|_K^{bp}}
R_p\big(
u\|x\|_K^{b-a},
\mathcal R_K(u)
\big)dx.
\end{align}
If
$
(p-1)a+b+1-N>0,
$
then, since $R_p\ge0$, one obtains
\begin{align}\label{f4-2}
\int_{\mathbb R^N}
\frac{|\mathcal R_K(u)|^p}{\|x\|_K^{bp}}dx
+
(p-1)
\int_{\mathbb R^N}
\frac{|u|^p}{\|x\|_K^{ap}}dx
\ge
\big((p-1)a+b+1-N\big)
\int_{\mathbb R^N}
\frac{|u|^p}{\|x\|_K^{(p-1)a+b+1}}dx.
\end{align}
In particular, if
$
b+1-a<0,
b\ge\frac{N-p}{p},
$
then
\begin{align*}
(p-1)a+b+1-N
=
p(b+1)-N+(p-1)(a-b-1)
>0.
\end{align*}
\end{corollary}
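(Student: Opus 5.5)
The plan is to mirror the proof of Corollary \ref{c3}, changing the sign of the auxiliary function $\psi$. I would apply Theorem \ref{Hardy-identity} with
$$
A=r^{-bp},\qquad \psi=r^{b-a},\qquad r=\|x\|_K .
$$
Both functions are smooth and positive on $(0,\infty)$. Moreover,
$$
A|\psi|^{p-2}\psi=r^{-bp+(b-a)(p-1)}=r^{-(p-1)a-b}
$$
is a pure power of $r$, so it lies in $C^1(0,\infty)$ and the extra regularity hypothesis holds. The $R_p$-remainder produced by Theorem \ref{Hardy-identity} is then
$$
A\,R_p\big(u\psi,\mathcal R_K(u)\big)=\frac{1}{\|x\|_K^{bp}}\,R_p\big(u\|x\|_K^{b-a},\mathcal R_K(u)\big),
$$
which is exactly the right-hand side of \eqref{f17}.

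Next I would compute the divergence term using Lemma \ref{lem:radial-divergence} with $f(r)=r^{-(p-1)a-b}$:
$$
\operatorname{div}\big(f(r)\sigma_K(x)\big)=\big(-(p-1)a-b+N-1\big)r^{-(p-1)a-b-1}.
$$
Hence
$$
-\operatorname{div}\big(A|\psi|^{p-2}\psi\sigma_K\big)=\big((p-1)a+b+1-N\big)\,r^{-(p-1)a-b-1}.
$$
For the other bracket term,
$$
(p-1)A|\psi|^p=(p-1)r^{-bp+p(b-a)}=(p-1)r^{-ap}.
$$
Substituting these into the identity of Theorem \ref{Hardy-identity} and moving the $(p-1)\int|u|^p\|x\|_K^{-ap}dx$ term to the left-hand side gives \eqref{f17}.

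Since $A>0$ and $R_p\ge0$, the right-hand side of \eqref{f17} is nonnegative, and \eqref{f4-2} follows whenever $(p-1)a+b+1-N>0$. For the final claim I would verify the algebraic identity
$$
(p-1)a+b+1-N=p(b+1)-N+(p-1)(a-b-1)
$$
by expanding the right-hand side. When $b+1-a<0$ and $b\ge\frac{N-p}{p}$, the first summand $p(b+1)-N$ is nonnegative and the second summand $(p-1)(a-b-1)$ is strictly positive because $p>1$, so the sum is positive.

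There is no real obstacle in this argument. The only step needing care is the sign bookkeeping: $\psi$ is positive here, so the divergence coefficient enters with the opposite sign from Corollary \ref{c3}, and the remainder has $+\mathcal R_K(u)$ instead of $-\mathcal R_K(u)$.
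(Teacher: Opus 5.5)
Your proposal is correct and follows the paper's proof essentially verbatim. You make the same choice $A=r^{-bp}$, $\psi=r^{b-a}$ in Theorem \ref{Hardy-identity}, compute the divergence coefficient $-\big((p-1)a+b+1-N\big)$ in the same way via Lemma \ref{lem:radial-divergence}, and obtain the inequality from $R_p\ge0$. Your explicit checks, namely $(p-1)A|\psi|^p=(p-1)r^{-ap}$ and the expansion of $p(b+1)-N+(p-1)(a-b-1)$, just fill in steps the paper leaves implicit.
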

\begin{proof}
Taking
$
A=r^{-bp},
\
\psi=r^{b-a}.
$
Then
$
A|\psi|^{p-2}\psi
=
r^{-bp+(b-a)(p-1)}=
r^{-(p-1)a-b}.
$
By Lemma \ref{lem:radial-divergence},
\begin{align*}
\operatorname{div}
\big(
A|\psi|^{p-2}\psi\sigma_K(x)
\big)
=
-\big(
1+(p-1)a+b-N
\big)
r^{-(p-1)a-b-1}.
\end{align*}
Therefore, Theorem \ref{Hardy-identity} gives
\eqref{f17}. Since
$
R_p(s,t)\ge0,
$
inequality \eqref{f4-2} follows.
\end{proof}

\subsection{Anisotropic \texorpdfstring{$L^p$}{Lp}-CKN identities on \texorpdfstring{$\RN$}{RN}}
We now optimize $\alpha$ in Theorem \ref{t1}, i.e., choosing $
\alpha
=
\Big(
\frac{
\int_{\mathbb R^N}
A(r)|\psi(r)|^p|u|^pdx
}{
\int_{\mathbb R^N}
A(r)|\mathcal R_K(u)|^pdx
}
\Big)^{\frac{p-1}{p^2}}
$. This gives the
anisotropic $L^p$-CKN identity and inequality.

\begin{theorem}\label{CKN-identity}
Let $1<p<\infty$, $K\in\Ksc$, $A,\psi\in C^1(0,\infty)$ with $A\ge0$, and
$
A|\psi|^{p-2}\psi\in C^1(0,\infty).
$
Let
$
u\in C_c^\infty(\mathbb R^N\setminus\{o\})\setminus\{0\}.
$
Suppose that
$$
\int_{\mathbb R^N}A|\mathcal R_K(u)|^pdx>0\ \ \mathrm{and}\
\int_{\mathbb R^N}A|\psi|^p|u|^pdx>0.
$$
Then
\begin{align*}
&\bigg(
\int_{\mathbb R^N}
A|\mathcal R_K(u)|^pdx
\bigg)^{\frac1p}
\bigg(
\int_{\mathbb R^N}
A|\psi|^p|u|^pdx
\bigg)^{\frac{p-1}{p}}
+
\frac{1}{p}
\int_{\mathbb R^N}
\operatorname{div}
\big(
A|\psi|^{p-2}\psi\sigma_K(x)
\big)
|u|^pdx
\\
&=
\frac{1}{p}
\int_{\mathbb R^N}
A
R_p
\Bigg(
\bigg(
\frac{
\int_{\mathbb R^N}A|\mathcal R_K(u)|^pdx
}{
\int_{\mathbb R^N}A|\psi|^p|u|^pdx
}
\bigg)^{\frac{1}{p^2}}
u\psi,
\bigg(
\frac{
\int_{\mathbb R^N}A|\psi|^p|u|^pdx
}{
\int_{\mathbb R^N}A|\mathcal R_K(u)|^pdx
}
\bigg)^{\frac{p-1}{p^2}}
\mathcal R_K(u)
\Bigg)dx.
\end{align*}
In particular, one obtains
\begin{align*}
\bigg(
\int_{\mathbb R^N}
A|\mathcal R_K(u)|^pdx
\bigg)^{\frac1p}
\bigg(
\int_{\mathbb R^N}
A|\psi|^p|u|^pdx
\bigg)^{\frac{p-1}{p}}
\ge
-\frac{1}{p}
\int_{\mathbb R^N}
\operatorname{div}
\big(
A|\psi|^{p-2}\psi\sigma_K(x)
\big)
|u|^pdx .
\end{align*}
\end{theorem}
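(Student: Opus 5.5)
The plan is to apply Theorem \ref{t1} with a specific choice of $\alpha>0$ and rearrange. Write
$$
X=\int_{\mathbb R^N}A|\mathcal R_K(u)|^pdx,\qquad Y=\int_{\mathbb R^N}A|\psi|^p|u|^pdx,
$$
both positive by hypothesis. We choose $\alpha=(Y/X)^{\frac{p-1}{p^2}}$, the value that minimizes $\alpha\mapsto \alpha^pX+(p-1)\alpha^{-p/(p-1)}Y$. Indeed, the derivative $p\alpha^{p-1}X-p\alpha^{-p/(p-1)-1}Y$ vanishes exactly when $\alpha^{p^2/(p-1)}=Y/X$.

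With this choice we have $\alpha^p=(Y/X)^{\frac{p-1}{p}}$, so $\alpha^pX=X^{1/p}Y^{(p-1)/p}$. Likewise $\alpha^{-p/(p-1)}=(X/Y)^{1/p}$, so $(p-1)\alpha^{-p/(p-1)}Y=(p-1)X^{1/p}Y^{(p-1)/p}$. The left-hand side of Theorem \ref{t1} therefore equals $pX^{1/p}Y^{(p-1)/p}$.

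Next we identify the arguments of $R_p$. We have $\alpha^{-\frac1{p-1}}=(X/Y)^{1/p^2}$, so the first argument is $(X/Y)^{1/p^2}u\psi$. The second argument is $(Y/X)^{(p-1)/p^2}\mathcal R_K(u)$. These match the statement exactly.

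Substituting into Theorem \ref{t1}, moving the divergence term to the left, and dividing by $p$ yields the stated identity. The inequality then follows from $A\ge0$ and $R_p\ge0$.

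There is no real obstacle here. The only care needed is the exponent bookkeeping, namely checking that $\alpha^p$ and $\alpha^{-p/(p-1)}$ balance so that both terms become multiples of the same product $X^{1/p}Y^{(p-1)/p}$. All integrals are finite because $u$ has compact support away from $o$ and $A,\psi$ are continuous on $(0,\infty)$.
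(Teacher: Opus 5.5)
Your proposal is correct and follows essentially the same route as the paper, which also obtains Theorem \ref{CKN-identity} by substituting $\alpha=(Y/X)^{\frac{p-1}{p^2}}$ into Theorem \ref{t1}. Your exponent bookkeeping checks out: $\alpha^pX=X^{1/p}Y^{(p-1)/p}$, $(p-1)\alpha^{-p/(p-1)}Y=(p-1)X^{1/p}Y^{(p-1)/p}$, and $\alpha^{-1/(p-1)}=(X/Y)^{1/p^2}$.
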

We now derive several consequences of Theorem \ref{CKN-identity} by choosing
suitable functions $A$ and $\psi$.

\begin{corollary}\label{cor:Heisenberg-p}
Let $1<p<\infty$, 
$
p'=\frac{p}{p-1},
$
and $K\in\Ksc$. Then, for 
$
u\in C_c^\infty(\mathbb R^N\setminus\{o\})\setminus\{0\},
$
one has
\begin{align}\label{f8}
&\bigg(
\int_{\RN}
|\mathcal R_K(u)|^pdx
\bigg)^{\frac1p}
\bigg(
\int_{\RN}
\hK{x}^{p'}|u|^pdx
\bigg)^{\frac1{p'}}
-
\frac{N}{p}
\int_{\RN}|u|^pdx
\nonumber
\\
&=
\frac1p
\int_{\RN}
R_p\Bigg(
-
\bigg(
\frac{
\int_{\RN}|\mathcal R_K(u)|^pdx
}{
\int_{\RN}\hK{x}^{p'}|u|^pdx
}
\bigg)^{\frac{1}{p^2}}
u\hK{x}^{\frac{1}{p-1}},
\bigg(
\frac{
\int_{\RN}\hK{x}^{p'}|u|^pdx
}{
\int_{\RN}|\mathcal R_K(u)|^pdx
}
\bigg)^{\frac{p-1}{p^2}}
\mathcal R_K(u)
\Bigg)dx.
\end{align}
\end{corollary}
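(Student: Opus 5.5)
The plan is to specialize Theorem \ref{CKN-identity} with the same choice used in Corollary \ref{Hardy-Heisenberg-cor}, namely $A=1$ and $\psi=-\|x\|_K^{\frac1{p-1}}$. Then $A|\psi|^{p-2}\psi=-r$, which is $C^1(0,\infty)$, so the regularity hypothesis holds. Also $A|\psi|^p=r^{p'}$, since $\frac{p}{p-1}=p'$. Consequently the two weighted integrals in Theorem \ref{CKN-identity} become $\int|\mathcal R_K(u)|^pdx$ and $\int\|x\|_K^{p'}|u|^pdx$.

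Next I compute the divergence term using Lemma \ref{lem:radial-divergence} with $f(r)=-r$. This gives $\operatorname{div}(-r\sigma_K)=-1-(N-1)=-N$. Hence
$$\frac1p\int\operatorname{div}\big(A|\psi|^{p-2}\psi\sigma_K\big)|u|^pdx=-\frac Np\int|u|^pdx,$$
which is exactly the second term on the left of \eqref{f8}. The right-hand side of Theorem \ref{CKN-identity} then reads verbatim as the right-hand side of \eqref{f8}: the first argument of $R_p$ is the power ratio times $u\psi=-u\|x\|_K^{\frac1{p-1}}$, and the second argument is the reciprocal ratio to the power $\frac{p-1}{p^2}$ times $\mathcal R_K(u)$.

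The only point needing care is the positivity hypothesis of Theorem \ref{CKN-identity}. For $u\in C_c^\infty(\RN\setminus\{o\})\setminus\{0\}$ we have $\int\|x\|_K^{p'}|u|^pdx>0$, because the weight is positive away from $o$. We also have $\int|\mathcal R_K(u)|^pdx>0$: if $\mathcal R_K(u)\equiv0$, then $u$ would be constant along every ray $r\mapsto r\sigma$, and since $u$ has compact support this forces $u\equiv0$. With both quantities positive, all the ratios in \eqref{f8} are well defined and the identity follows directly.

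I do not expect a genuine obstacle. The main check is that the exponents in $R_p$ match those produced by the optimal $\alpha$ in Theorem \ref{CKN-identity}.
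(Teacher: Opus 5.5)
Your proposal is correct and matches the paper's proof: it also takes $A=1$ and $\psi=-\|x\|_K^{\frac1{p-1}}$, computes $\operatorname{div}(-r\sigma_K)=-N$ via Lemma \ref{lem:radial-divergence}, and then applies Theorem \ref{CKN-identity}. Your explicit check of the two positivity hypotheses is sound and fills a detail the paper leaves implicit: $\mathcal R_K(u)\equiv0$ makes $u$ constant on rays, so compact support forces $u\equiv0$.
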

\begin{proof}
Taking
$
A=1,
\
\psi=-r^{\frac1{p-1}}.
$
As in the proof of Corollary \ref{Hardy-Heisenberg-cor},
Lemma \ref{lem:radial-divergence} gives
$$
\operatorname{div}
\big(
A|\psi|^{p-2}\psi\sigma_K(x)
\big)
=
-N.
$$
The conclusion follows from Theorem \ref{CKN-identity}.
\end{proof}

\begin{corollary}
Let $1<p<\infty$,  $K\in\Ksc$, $a,b\in\mathbb R$, and
$u\in C_c^\infty(\mathbb R^N\setminus\{o\})$.
Then
\begin{align}\label{f0-1}
&\bigg(
\int_{\mathbb R^N}
\frac{|\mathcal R_K(u)|^p}{\|x\|_K^{bp}}dx
\bigg)^{\frac{1}{p}}
\bigg(
\int_{\mathbb R^N}
\frac{|u|^p}{\|x\|_K^{ap}}dx
\bigg)^{\frac{p-1}{p}}
-
\frac{N-1-(p-1)a-b}{p}
\int_{\mathbb R^N}
\frac{|u|^p}{\|x\|_K^{(p-1)a+b+1}}dx
\nonumber
\\
&=
\frac{1}{p}
\int_{\mathbb R^N}
\frac{1}{\|x\|_K^{bp}}
R_p
\Bigg(
-
\Bigg(
\frac{
\int_{\mathbb R^N}
\frac{|\mathcal R_K(u)|^p}{\|x\|_K^{bp}}dx
}{
\int_{\mathbb R^N}
\frac{|u|^p}{\|x\|_K^{ap}}dx
}
\Bigg)^{\frac{1}{p^2}}
u\|x\|_K^{b-a},
\Bigg(
\frac{
\int_{\mathbb R^N}
\frac{|u|^p}{\|x\|_K^{ap}}dx
}{
\int_{\mathbb R^N}
\frac{|\mathcal R_K(u)|^p}{\|x\|_K^{bp}}dx
}
\Bigg)^{\frac{p-1}{p^2}}
\mathcal R_K(u)
\Bigg)dx.
\end{align}
If
$
N-1-(p-1)a-b>0,
$
then, since $R_p\ge0$, one obtains 
\begin{align}\label{f0-2}
&\bigg(
\int_{\mathbb R^N}
\frac{|\mathcal R_K(u)|^p}{\|x\|_K^{bp}}dx
\bigg)^{\frac{1}{p}}
\bigg(
\int_{\mathbb R^N}
\frac{|u|^p}{\|x\|_K^{ap}}dx
\bigg)^{\frac{p-1}{p}}
\ge
\frac{N-1-(p-1)a-b}{p}
\int_{\mathbb R^N}
\frac{|u|^p}{\|x\|_K^{(p-1)a+b+1}}dx.
\end{align}
In particular, if
$
b+1-a>0,
b\le\frac{N-p}{p},
$
then
\begin{align*}
N-1-(p-1)a-b
=
N-p(b+1)+(p-1)(b+1-a)>0.
\end{align*}
\end{corollary}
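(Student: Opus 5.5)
The plan is to apply Theorem \ref{CKN-identity} with the same choices as in Corollary \ref{c3}, namely
$$
A=r^{-bp},\qquad \psi=-r^{b-a},\qquad r=\|x\|_K.
$$
First I check the hypotheses. Both $A$ and $\psi$ lie in $C^1(0,\infty)$, and $A\ge0$. Moreover
$$
A|\psi|^{p-2}\psi=-r^{-(p-1)a-b}
$$
is smooth on $(0,\infty)$. We also have $A|\psi|^p=r^{-bp+p(b-a)}=r^{-ap}$, so the weighted integrals in Theorem \ref{CKN-identity} are exactly
$$
\int_{\RN}\frac{|\mathcal R_K(u)|^p}{\|x\|_K^{bp}}dx
\quad\text{and}\quad
\int_{\RN}\frac{|u|^p}{\|x\|_K^{ap}}dx .
$$

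Next I compute the divergence term with Lemma \ref{lem:radial-divergence}, taking $f(r)=-r^{-(p-1)a-b}$:
$$
\operatorname{div}\big(A|\psi|^{p-2}\psi\sigma_K\big)=-\big(N-1-(p-1)a-b\big)r^{-(p-1)a-b-1},
$$
exactly as in Corollary \ref{c3}. Substituting this into Theorem \ref{CKN-identity} gives \eqref{f0-1} directly. The first argument of $R_p$ becomes $-(\cdots)^{1/p^2}u\|x\|_K^{b-a}$, which accounts for the minus sign in the statement.

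One technical point needs care. Theorem \ref{CKN-identity} requires $u\ne0$ and both weighted integrals to be positive. For $u\in C_c^\infty(\RN\setminus\{o\})\setminus\{0\}$ the $|u|^p$ integral is positive. The integral of $|\mathcal R_K(u)|^p$ is also positive: if $\mathcal R_K(u)\equiv0$, then $u$ would be constant along each ray $r\mapsto r\sigma$, and compact support away from $o$ forces $u\equiv0$. The ratios appearing in \eqref{f0-1} are therefore well defined. For $u=0$ the ratios are meaningless, so I read \eqref{f0-1} for nonzero $u$, while \eqref{f0-2} holds trivially when $u=0$.

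The inequality \eqref{f0-2} then follows from \eqref{f0-1} because $R_p\ge0$ and $r^{-bp}>0$; the hypothesis $N-1-(p-1)a-b>0$ only serves to make the constant meaningful as a positive lower bound. The final algebraic claim is the identity
$$
N-1-(p-1)a-b=N-p(b+1)+(p-1)(b+1-a),
$$
checked by expanding the right side. If $b+1-a>0$ and $b\le\frac{N-p}{p}$, the first term is $\ge0$ and the second is $>0$, so the sum is positive.

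I expect no real obstacle here. The only step needing attention is verifying the positivity hypotheses of Theorem \ref{CKN-identity}, and the ray argument above handles it.
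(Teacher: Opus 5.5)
Your proposal is correct and matches the paper's proof: you make the same choice $A=r^{-bp}$, $\psi=-r^{b-a}$, compute the divergence with Lemma \ref{lem:radial-divergence}, and apply Theorem \ref{CKN-identity} directly. Your check that both weighted integrals are positive for nonzero $u$ uses the ray argument, and your remark on the case $u=0$ is sound; the paper leaves both points implicit.
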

\begin{proof}
Taking
$
A=r^{-bp},
\
\psi=-r^{b-a}.
$
As in the proof of Corollary \ref{c3},
Lemma \ref{lem:radial-divergence} gives
$$
\operatorname{div}
\big(
A|\psi|^{p-2}\psi\sigma_K(x)
\big)
=
-\big(
N-1-(p-1)a-b
\big)
r^{-(p-1)a-b-1}.
$$
Therefore, Theorem \ref{CKN-identity} gives
\eqref{f0-1}. Since
$
R_p(s,t)\ge0,
$
inequality \eqref{f0-2} follows.
\end{proof}

\begin{corollary}
Let $1<p<\infty$, let $K\in\Ksc$, $a,b\in\mathbb R$, and
$u\in C_c^\infty(\mathbb R^N\setminus\{o\})$.
Then
\begin{align}\label{f1-1}
&\bigg(
\int_{\mathbb R^N}
\frac{|\mathcal R_K(u)|^p}{\|x\|_K^{bp}}dx
\bigg)^{\frac{1}{p}}
\bigg(
\int_{\mathbb R^N}
\frac{|u|^p}{\|x\|_K^{ap}}dx
\bigg)^{\frac{p-1}{p}}
-
\frac{(p-1)a+b+1-N}{p}
\int_{\mathbb R^N}
\frac{|u|^p}{\|x\|_K^{(p-1)a+b+1}}dx
\nonumber
\\
&=
\frac{1}{p}
\int_{\mathbb R^N}
\frac{1}{\|x\|_K^{bp}}
R_p
\Bigg(
\Bigg(
\frac{
\int_{\mathbb R^N}
\frac{|\mathcal R_K(u)|^p}{\|x\|_K^{bp}}dx
}{
\int_{\mathbb R^N}
\frac{|u|^p}{\|x\|_K^{ap}}dx
}
\Bigg)^{\frac{1}{p^2}}
u\|x\|_K^{b-a},
\Bigg(
\frac{
\int_{\mathbb R^N}
\frac{|u|^p}{\|x\|_K^{ap}}dx
}{
\int_{\mathbb R^N}
\frac{|\mathcal R_K(u)|^p}{\|x\|_K^{bp}}dx
}
\Bigg)^{\frac{p-1}{p^2}}
\mathcal R_K(u)
\Bigg)dx.
\end{align}
If
$
(p-1)a+b+1-N>0,
$
then, since $R_p\ge0$, one obtains
\begin{align}\label{f1-2}
&\bigg(
\int_{\mathbb R^N}
\frac{|\mathcal R_K(u)|^p}{\|x\|_K^{bp}}dx
\bigg)^{\frac{1}{p}}
\bigg(
\int_{\mathbb R^N}
\frac{|u|^p}{\|x\|_K^{ap}}dx
\bigg)^{\frac{p-1}{p}}
\ge
\frac{(p-1)a+b+1-N}{p}
\int_{\mathbb R^N}
\frac{|u|^p}{\|x\|_K^{(p-1)a+b+1}}dx.
\end{align}
In particular, if
$
b+1-a<0,
b\ge\frac{N-p}{p},
$
then
\begin{align*}
(p-1)a+b+1-N
=
p(b+1)-N+(p-1)(a-b-1)
>0.
\end{align*}
\end{corollary}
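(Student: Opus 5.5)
This follows the same pattern as the preceding corollary, with the sign of $\psi$ reversed. I will apply Theorem \ref{CKN-identity} with
$$
A=r^{-bp},\qquad \psi=r^{b-a},\qquad r=\|x\|_K .
$$
Then $A\ge0$, $A|\psi|^p=r^{-ap}$, and $A|\psi|^{p-2}\psi=r^{-(p-1)a-b}$ is smooth on $(0,\infty)$, so the regularity hypothesis $A|\psi|^{p-2}\psi\in C^1(0,\infty)$ holds. By Lemma \ref{lem:radial-divergence}, exactly as in the proof of Corollary \ref{c4},
$$
\operatorname{div}\big(A|\psi|^{p-2}\psi\sigma_K(x)\big)
=\big(N-1-(p-1)a-b\big)r^{-(p-1)a-b-1}
=-\big((p-1)a+b+1-N\big)r^{-(p-1)a-b-1}.
$$
Substituting this into the identity of Theorem \ref{CKN-identity}, the term $\frac1p\int\operatorname{div}(\cdots)|u|^p\,dx$ becomes $-\frac{(p-1)a+b+1-N}{p}\int |u|^p\|x\|_K^{-(p-1)a-b-1}\,dx$. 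The remainder term has first argument $(\cdots)^{1/p^2}u\|x\|_K^{b-a}$, with no minus sign, and second argument $(\cdots)^{(p-1)/p^2}\mathcal R_K(u)$. This is precisely \eqref{f1-1}.

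Theorem \ref{CKN-identity} requires both weighted integrals to be positive, so I handle the degenerate cases separately. If $u\ne0$, then $\int r^{-ap}|u|^p\,dx>0$. Moreover, $\int r^{-bp}|\mathcal R_K(u)|^p\,dx>0$, because otherwise $u$ would be constant along every ray $\{t\sigma:t>0\}$. Since $u$ vanishes near $o$, it would then vanish identically. If $u\equiv0$, both sides of \eqref{f1-1} are zero with the convention that the remainder vanishes. So the identity holds in every case, after stating the convention for $u=0$.

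Inequality \eqref{f1-2} then follows immediately from $R_p\ge0$. The positivity claim under $b+1-a<0$ and $b\ge\frac{N-p}{p}$ is the algebraic identity
$$
(p-1)a+b+1-N=p(b+1)-N+(p-1)(a-b-1).
$$
Here the first term $p(b+1)-N$ is $\ge0$ and the second term $(p-1)(a-b-1)$ is $>0$.

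There is no real obstacle. The only points requiring care are the sign bookkeeping in the divergence term and the nondegeneracy of the two integrals.
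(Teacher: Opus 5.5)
Your proposal is correct and follows the paper's proof: the same choice $A=\|x\|_K^{-bp}$, $\psi=\|x\|_K^{b-a}$ in Theorem \ref{CKN-identity}, the divergence computed via Lemma \ref{lem:radial-divergence}, and $R_p\ge0$ for the inequality. Your extra check that both weighted integrals are positive for $u\ne0$, together with the convention for $u\equiv0$, supplies a hypothesis of Theorem \ref{CKN-identity} that the paper leaves implicit.
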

\begin{proof}
Taking
$
A=r^{-bp},
\
\psi=r^{b-a}.
$
As in the proof of Corollary \ref{c4},
Lemma \ref{lem:radial-divergence} gives
$$
\operatorname{div}
\big(
A|\psi|^{p-2}\psi\sigma_K(x)
\big)
=
-\big(
1+(p-1)a+b-N
\big)
r^{-(p-1)a-b-1}.
$$
Therefore, Theorem \ref{CKN-identity} gives
\eqref{f1-1}. Since
$
R_p(s,t)\ge0,
$
inequality \eqref{f1-2} follows.
\end{proof}
\begin{remark}
The identities \eqref{f16}, \eqref{f17}, \eqref{f0-1}, and
\eqref{f1-1} are valid for all $a,b\in\mathbb R$.
The parameter regions used above are imposed only to guarantee the
positivity of the constants in the corresponding inequalities.
These regions are sufficient, but not necessary, for positivity.
\end{remark}

\section{Sharp anisotropic \texorpdfstring{$L^p$}{Lp}-CKN inequality}\label{s4}
In this section, we apply the anisotropic $L^p$-identities obtained in
Section \ref{s3} to prove sharp inequalities associated with the Minkowski
functional. We first discuss the gradient anisotropic $L^p$-Heisenberg uncertainty principle. Its proof combines the radial identity with the anisotropic P\'olya-Szeg\"{o} principle and the weighted anisotropic 
Hardy-Littlewood inequality. We then prove the sharp anisotropic radial $L^p$-CKN inequalities and characterize the corresponding extremal functions in the natural completion space. As a special case of the $L^p$-CKN inequality, we obtain the anisotropic radial $L^p$-Heisenberg uncertainty principle. 
Finally, using the two-sided anisotropic Cauchy-Schwarz inequality
associated with
$L=\operatorname{conv}\big(K\cup(-K)\big)$, we derive norm-based
anisotropic gradient versions of the radial inequalities.

\subsection{Sharp constants and optimizers for
the anisotropic \texorpdfstring{$L^p$}{Lp}-Heisenberg Uncertainty Principle}
This subsection is devoted to the sharp anisotropic
$L^p$-Heisenberg uncertainty principle associated with the Minkowski
functional. The proof combines Lemma
\ref{lem:Heisenberg-extension} with the anisotropic
P\'olya-Szeg\"o principle and the weighted anisotropic
Hardy-Littlewood inequality. For  $1<p<\infty$ and $p'=\frac{p}{p-1}$, let
$$
\mathcal C
=
W^{1,p}(\RN)
\cap
\bigg\{
u:
\int_{\RN}
\|x\|_K^{p'}|u|^pdx
<\infty
\bigg\}.
$$

\begin{theorem}\label{thm:gradient-Heisenberg}
Let $1<p<\infty$, $p'=\frac{p}{p-1}$, and $K\in\Ksc$.
Then, for every nonnegative $u\in\mathcal C$, the following
statements hold.

\begin{itemize}

\item[(i)]
\begin{align}
\int_{\RN}
\|-\nabla u\|_{K^*}^pdx
+
(p-1)
\int_{\RN}
\|x\|_K^{p'}u^pdx
\ge
N
\int_{\RN}
u^pdx.
\label{Lp-Heisenberg-additive}
\end{align}
The constant $N$ is sharp. If $u\in\mathcal C\backslash\{0\}$, equality holds in
\eqref{Lp-Heisenberg-additive} if and only if
$$
u(x)
=
\kappa
\exp\bigg(
-\frac1{p'}\|x\|_K^{p'}
\bigg),
\quad
\kappa>0.
$$

\item[(ii)]
\begin{align}
\bigg(
\int_{\RN}
\|-\nabla u\|_{K^*}^pdx
\bigg)^{\frac1p}
\bigg(
\int_{\RN}
\|x\|_K^{p'}u^pdx
\bigg)^{\frac1{p'}}
\ge
\frac Np
\int_{\RN}
u^pdx.
\label{Lp-Heisenberg-product}
\end{align}
The constant $\frac{N}{p}$ is sharp. If $u\in\mathcal C\backslash\{0\}$, equality holds in
\eqref{Lp-Heisenberg-product} if and only if
$$
u(x)
=
\beta
\exp\bigg(
-\frac1{p'\lambda^{p'}}
\|x\|_K^{p'}
\bigg),
\quad
\beta>0,\quad\lambda>0.
$$

\end{itemize}
\end{theorem}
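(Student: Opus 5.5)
The plan is to reduce to $K$-radially symmetric functions by convex symmetrization, and then apply the radial identity of Lemma \ref{lem:Heisenberg-extension}.

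\emph{Step 1 (symmetrization).} Let $u\in\mathcal C$ be nonnegative and set $v=u^K$. By Proposition \ref{p:rearrangement-properties}(iii) and equimeasurability, $\int v^p=\int u^p$. By Proposition \ref{p1}(ii) with the increasing weight $t\mapsto t^{p'}$, $\int\|x\|_K^{p'}v^p\le\int\|x\|_K^{p'}u^p$. By Lemma \ref{l2}, $\int\|-\nabla v\|_{K^*}^p\le\int\|-\nabla u\|_{K^*}^p$; in particular $v\in\mathcal C$. Each side of \eqref{Lp-Heisenberg-additive} and \eqref{Lp-Heisenberg-product} therefore moves in the favourable direction when $u$ is replaced by $v$, so it suffices to prove both inequalities for $v$.

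\emph{Step 2 (radial pointwise bound).} Write $v(x)=\phi(\|x\|_K)$ with $\phi$ nonincreasing. Then $\nabla v=\phi'(r)\nabla\|x\|_K$ with $\phi'\le0$. Since $\|\nabla\|x\|_K\|_{K^*}=1$ and $\|\cdot\|_{K^*}$ is positively homogeneous,
\[
\|-\nabla v\|_{K^*}=|\phi'(r)|.
\]
Also $\mathcal R_K(v)=\phi'(r)\,x\cdot\nabla\|x\|_K/r=\phi'(r)$. Hence $\|-\nabla v\|_{K^*}=|\mathcal R_K(v)|$. For general nonnegative $w$ we also have the pointwise bound $|\mathcal R_K(w)|\le\|-\nabla w\|_{K^*}$ wherever $\mathcal R_K(w)\le0$, from \eqref{anisotropic-CS}; we only need the radial equality here. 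The step I expect to be the main obstacle is making this rigorous for $v\in W^{1,p}$, where $\phi$ is only locally absolutely continuous. I would argue it on the level sets using the coarea formula, or note that the a.e. chain rule suffices.

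\emph{Step 3 (inequality (i) and equality).} Apply Lemma \ref{lem:Heisenberg-extension} to $v$. This gives \eqref{Lp-Heisenberg-additive} for $v$, hence for $u$. Equality for $u$ forces equality in each step. Since $t^{p'}$ is strictly increasing, Proposition \ref{p1}(ii) gives $u=u^K$. The remainder must vanish, so $\phi'(r)=-r^{1/(p-1)}\phi$. Solving on $(0,\infty)$ gives $\phi=\kappa e^{-r^{p'}/p'}$ with $\kappa>0$, since $u\ne0$. Conversely, this Gaussian lies in $\mathcal C$, and it makes every step an equality: it is already $K$-radially decreasing, and the remainder vanishes. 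So $N$ is sharp and attained.

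\emph{Step 4 (inequality (ii) by scaling).} Apply (i) to $u_\lambda(x)=u(\lambda x)$. The terms scale as
\[
\lambda^{p-N}\int\|-\nabla u\|_{K^*}^p,\qquad \lambda^{-p'-N}\int\|x\|_K^{p'}u^p,\qquad \lambda^{-N}\int u^p.
\]
Set $G=\int\|-\nabla u\|_{K^*}^p$ and $W=\int\|x\|_K^{p'}u^p$. Then (i) reads $\lambda^pG+(p-1)\lambda^{-p'}W\ge N\int u^p$ for all $\lambda>0$. Minimizing over $\lambda$ (or using Young's inequality $ab\le a^p/p+b^{p'}/p'$) gives $\min_\lambda=pG^{1/p}W^{1/p'}$, which is \eqref{Lp-Heisenberg-product}. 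The degenerate cases $G=0$ or $W=0$ force $u=0$.

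Equality in (ii) occurs iff equality holds in (i) for $u_\lambda$ at the optimal $\lambda$. This means $u(\lambda x)=\kappa\exp(-\|x\|_K^{p'}/p')$, i.e. $u=\beta\exp\big(-\|x\|_K^{p'}/(p'\lambda^{p'})\big)$. A direct check, using the same scaling, shows that each such function gives equality, so $N/p$ is sharp.
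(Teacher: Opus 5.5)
Your proposal is correct and follows the paper's proof essentially step for step: the P\'olya--Szeg\"{o} principle and the weighted Hardy--Littlewood inequality reduce to $u^K$; the radial identity $\|-\nabla u^K\|_{K^*}=|\mathcal R_K(u^K)|$ lets Lemma \ref{lem:Heisenberg-extension} apply; the equality case uses the strictly increasing weight to get $u=u^K$ and then solves the ODE; and (ii) follows by scaling $u(\lambda x)$ and optimizing in $\lambda$. Your explicit handling of the degenerate cases $G=0$ or $W=0$, and your flagging of the a.e.\ chain rule for the radial $W^{1,p}$ function, are small points the paper leaves implicit.
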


\begin{proof}
We first prove part $(i)$. By the anisotropic P\'olya-Szeg\"o
principle,
\begin{align}
\int_{\RN}
\|-\nabla u\|_{K^*}^pdx
\ge
\int_{\RN}
\|-\nabla u^K\|_{K^*}^pdx.
\label{Heisenberg-PS}
\end{align}
Let $r=\|x\|_K$. Since 
$
 r^{p'}
$
is strictly increasing, Proposition \ref{p1} gives
\begin{align}
\int_{\RN}
\|x\|_K^{p'}u^pdx
\ge
\int_{\RN}
\|x\|_K^{p'}(u^K)^pdx.
\label{Heisenberg-HL}
\end{align}
The function $u^K$ is nonnegative, nonincreasing, and radially
symmetric with respect to $K$. Hence,
$$
\|-\nabla u^K\|_{K^*}
=
|\mathcal R_K(u^K)|
$$
almost everywhere. 
By the anisotropic P\'olya-Szeg\"o principle together with the
equimeasurability of $u$ and $u^K$, one has
$
u^K\in W^{1,p}(\mathbb R^N).
$
Moreover, by \eqref{Heisenberg-HL},
$$
\int_{\mathbb R^N}
\|x\|_K^{p'}(u^K)^pdx
<\infty.
$$
Therefore, Lemma \ref{lem:Heisenberg-extension} applies to $u^K$.
By Lemma \ref{lem:Heisenberg-extension}, one has
\begin{align}
&\int_{\RN}
|\mathcal R_K(u^K)|^pdx
+
(p-1)
\int_{\RN}
\|x\|_K^{p'}(u^K)^pdx
\ge
N
\int_{\RN}
(u^K)^pdx.
\label{Heisenberg-radial-extension}
\end{align}
Combining \eqref{Heisenberg-PS},
\eqref{Heisenberg-HL}, and
\eqref{Heisenberg-radial-extension}, and using the
equimeasurability of $u$ and $u^K$, we obtain
\begin{align*}
&\int_{\RN}
\|-\nabla u\|_{K^*}^pdx
+
(p-1)
\int_{\RN}
\|x\|_K^{p'}u^pdx
\nonumber\\
&\ge
\int_{\RN}
|\mathcal R_K(u^K)|^pdx
+
(p-1)
\int_{\RN}
\|x\|_K^{p'}(u^K)^pdx
\nonumber\\
&\ge
N
\int_{\RN}
(u^K)^pdx
=
N
\int_{\RN}
u^pdx.
\end{align*}
This proves \eqref{Lp-Heisenberg-additive}.

Suppose that $u\not\equiv0$ and equality holds in
\eqref{Lp-Heisenberg-additive}. Then equality must hold in
\eqref{Heisenberg-HL}. Since $r^{p'}$ is strictly increasing,
Proposition \ref{p1} implies
$
u=u^K
$
almost everywhere in $\RN$. Equality must also hold in
\eqref{Heisenberg-radial-extension}. Lemma
\ref{lem:Heisenberg-extension} therefore gives
$$
\mathcal R_K(u)
=
-\|x\|_K^{\frac1{p-1}}u
$$
almost everywhere.

Since $u=u^K$, there exists a nonincreasing function, still denoted
by $u$, such that
$$
u(x)=u(r),
\quad
r=\|x\|_K.
$$
By the anisotropic polar formula and Fubini's theorem, one has
$
u\in W_{\mathrm{loc}}^{1,p}(0,\infty)
$
and
$
u'(r)
=
-r^{\frac1{p-1}}u(r)
$
for almost every $r>0$. Therefore,
$$
u(r)
=
\kappa
\exp\bigg(
-\frac1{p'}r^{p'}
\bigg)
$$
for some $\kappa>0$. 
Conversely, this function satisfies
$$
\mathcal R_K(u)
=
-\|x\|_K^{\frac1{p-1}}u,
$$
and all the inequalities above become equalities. This proves part
$(i)$, including the sharpness of the constant.

We next prove part $(ii)$. For $\tau>0$, define
$$
u_\tau(x)=u(\tau x).
$$
Applying part $(i)$ to $u_\tau$ and making the change of variables
$y=\tau x$, we obtain
\begin{align*}
&\tau^p
\int_{\RN}
\|-\nabla u\|_{K^*}^pdx
+
(p-1)\tau^{-p'}
\int_{\RN}
\|x\|_K^{p'}u^pdx
\ge
N
\int_{\RN}
u^pdx.
\end{align*}
Minimizing the left-hand side over $\tau>0$ gives
\begin{align*}
&p
\bigg(
\int_{\RN}
\|-\nabla u\|_{K^*}^pdx
\bigg)^{\frac1p}
\bigg(
\int_{\RN}
\|x\|_K^{p'}u^pdx
\bigg)^{\frac1{p'}}
\ge
N
\int_{\RN}
u^pdx,
\end{align*}
which proves \eqref{Lp-Heisenberg-product}.

Suppose that $u\not\equiv0$ and equality holds in
\eqref{Lp-Heisenberg-product}. Let
$$
\tau_0
=
\Bigg(
\frac{
\int_{\RN}
\|x\|_K^{p'}u^pdx
}{
\int_{\RN}
\|-\nabla u\|_{K^*}^pdx
}
\Bigg)^{\frac1{pp'}}.
$$
Then
\begin{align*}
&\tau_0^p
\int_{\RN}
\|-\nabla u\|_{K^*}^pdx
+
(p-1)\tau_0^{-p'}
\int_{\RN}
\|x\|_K^{p'}u^pdx
\nonumber\\
&=
p
\bigg(
\int_{\RN}
\|-\nabla u\|_{K^*}^pdx
\bigg)^{\frac1p}
\bigg(
\int_{\RN}
\|x\|_K^{p'}u^pdx
\bigg)^{\frac1{p'}}
\nonumber\\
&=
N
\int_{\RN}
u^pdx.
\end{align*}
Thus, $u_{\tau_0}$ attains equality in
\eqref{Lp-Heisenberg-additive}. By part $(i)$,
$$
u_{\tau_0}(x)
=
\beta
\exp\bigg(
-\frac1{p'}\|x\|_K^{p'}
\bigg)
$$
for some $\beta>0$. Since
$
u_{\tau_0}(x)=u(\tau_0x),
$
it follows that
$$
u(x)
=
\beta
\exp\bigg(
-\frac1{p'\tau_0^{p'}}
\|x\|_K^{p'}
\bigg).
$$
Writing $\lambda=\tau_0$ gives the stated family of extremal
functions.

Conversely, suppose that
$$
u(x)
=
\beta
\exp\bigg(
-\frac1{p'\lambda^{p'}}
\|x\|_K^{p'}
\bigg),
\quad
\beta>0,\quad\lambda>0.
$$
Then
$$
u_\lambda(x)
=
\beta
\exp\bigg(
-\frac1{p'}\|x\|_K^{p'}
\bigg),
$$
so $u_\lambda$ attains equality in
\eqref{Lp-Heisenberg-additive}. Reversing the scaling argument shows
that equality holds in \eqref{Lp-Heisenberg-product}. This completes
the proof.
\end{proof}

\begin{remark}
The zero function also attains equality in both inequalities in
Theorem \ref{thm:gradient-Heisenberg}. Therefore, the complete
equality classes consist of $u\equiv0$ together with the corresponding
nonzero extremal families stated in Theorem \ref{thm:gradient-Heisenberg}.
\end{remark}

\begin{remark}
For a general, possibly sign-changing, function $u\in\mathcal C$,
applying Theorem \ref{thm:gradient-Heisenberg} to $|u|$ and using
$
\|\pm\nabla u\|_{K^*}
\le
\|\nabla u\|_{L^*}
$
almost everywhere, we obtain
\begin{align*}
\int_{\mathbb R^N}
\|\nabla u\|_{L^*}^pdx
+
(p-1)
\int_{\mathbb R^N}
\|x\|_K^{p'}|u|^pdx
\ge
N
\int_{\mathbb R^N}|u|^pdx.
\end{align*}
Moreover,
\begin{align*}
\left(
\int_{\mathbb R^N}
\|\nabla u\|_{L^*}^pdx
\right)^{\frac1p}
\left(
\int_{\mathbb R^N}
\|x\|_K^{p'}|u|^pdx
\right)^{\frac{1}{p'}}
\ge
\frac{N}{p}
\int_{\mathbb R^N}|u|^pdx.
\end{align*}
\end{remark}

\subsection{Sharp constants and optimizers for
the anisotropic \texorpdfstring{$L^p$}{Lp}-CKN inequality}
In this section, we prove the sharp anisotropic 
$L^p$-CKN inequality associated with the
Minkowski functional.
Let
$\mathcal C_{K,a,b}^p$ be the completion of
$C_c^\infty(\mathbb R^N\setminus\{o\})$ with respect to the norm
$$
\|u\|_{\mathcal C_{K,a,b}^p}
=
\bigg(
\int_{\mathbb R^N}
\frac{|\mathcal R_K(u)|^p}{\|x\|_K^{bp}}dx
+
\int_{\mathbb R^N}
\frac{|u|^p}{\|x\|_K^{ap}}dx
\bigg)^{\frac1p}.
$$

\begin{lemma}\label{lem:completion-representatives}
Let $1<p<\infty$, $K\in\Ksc$, and $a,b\in\R$.
Every element of $\mathcal C_{K,a,b}^p$ admits a unique
representative
$
u\in L_{\mathrm{loc}}^p(\RN\setminus\{o\}).
$
Moreover, the anisotropic radial derivative extends uniquely and
continuously to $\mathcal C_{K,a,b}^p$, and
$$
\frac{\mathcal R_K(u)}{\|x\|_K^b}\in L^p(\RN),
\quad
\frac{u}{\|x\|_K^a}\in L^p(\RN).
$$
For $\mu_K$-almost every $\sigma\in\partial K$, the function
$
r\mapsto u(r\sigma)
$
belongs to $W_{\mathrm{loc}}^{1,p}(0,\infty)$ and hence is locally
absolutely continuous. Furthermore,
$$
\frac{d}{dr}u(r\sigma)
=
\mathcal R_K(u)(r\sigma)
$$
for almost every $r>0$ and for $\mu_K$-almost every
$\sigma\in\partial K$.
\end{lemma}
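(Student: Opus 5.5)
We pass to anisotropic polar coordinates and reduce everything to a family of one-dimensional weighted Sobolev problems on $(0,\infty)$, parametrized by $\sigma\in\partial K$. Let $\{u_k\}\subset C_c^\infty(\RN\setminus\{o\})$ be a Cauchy sequence for $\|\cdot\|_{\mathcal C_{K,a,b}^p}$. Then $u_k\|x\|_K^{-a}$ and $\mathcal R_K(u_k)\|x\|_K^{-b}$ are Cauchy in $L^p(\RN)$, with limits $F$ and $G$. Set $u:=\|x\|_K^{a}F$ and $\mathcal R_K(u):=\|x\|_K^{b}G$. On every compact set $Q\subset\RN\setminus\{o\}$ the weights $\|x\|_K^{\pm a},\|x\|_K^{\pm b}$ are bounded above and below, because $\|\cdot\|_K$ is continuous and positive off the origin. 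Hence $u_k\to u$ in $L^p(Q)$, and the representative $u\in L^p_{\mathrm{loc}}(\RN\setminus\{o\})$ is well defined. Equivalent Cauchy sequences (difference tending to zero in norm) give the same $F$ and $G$, so the representative is unique, and the extension of $\mathcal R_K$ is continuous by construction. The two integrability statements then hold by definition.

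For the fibrewise statement we apply Proposition \ref{prop:anisotropic-polar-formula} to $|u_k-u|^p\|x\|_K^{-ap}$ and $|\mathcal R_K(u_k)-\mathcal R_K(u)|^p\|x\|_K^{-bp}$. This gives
\begin{align*}
\int_{\partial K}\int_0^\infty\Big(|u_k(r\sigma)-u(r\sigma)|^pr^{N-1-ap}+|\mathcal R_K(u_k)(r\sigma)-\mathcal R_K(u)(r\sigma)|^pr^{N-1-bp}\Big)dr\,d\mu_K(\sigma)\to0.
\end{align*}
Passing to a subsequence with summable norms and using Fubini and Borel--Cantelli (or Fatou), we get the following for $\mu_K$-a.e.\ $\sigma$. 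The inner integral tends to $0$, so $u_k(\cdot\,\sigma)\to u(\cdot\,\sigma)$ and $\frac{d}{dr}u_k(r\sigma)=\mathcal R_K(u_k)(r\sigma)\to\mathcal R_K(u)(r\sigma)$ in $L^p(I)$ for every compact interval $I\subset(0,\infty)$. The power weights are bounded above and below on $I$, so these are genuinely unweighted $L^p(I)$ limits. A countable exhaustion of $(0,\infty)$ by intervals $[1/m,m]$ makes the null set of bad $\sigma$ independent of $I$.

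Fix such a good $\sigma$, a compact $I$, and $\phi\in C_c^\infty(I)$. For smooth $u_k$ we have $\int u_k(r\sigma)\phi'(r)dr=-\int \mathcal R_K(u_k)(r\sigma)\phi(r)dr$, since $\frac{d}{dr}u_k(r\sigma)=\nabla u_k(r\sigma)\cdot\sigma$. Passing to the limit shows that $\mathcal R_K(u)(\cdot\,\sigma)$ is the weak derivative of $u(\cdot\,\sigma)$ on $I$. Thus $u(\cdot\,\sigma)\in W^{1,p}_{\mathrm{loc}}(0,\infty)$, it has a locally absolutely continuous representative, and $\frac{d}{dr}u(r\sigma)=\mathcal R_K(u)(r\sigma)$ for a.e.\ $r$.

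The main obstacle is the measure-theoretic bookkeeping. The functions $u$ and $\mathcal R_K(u)$ are only defined a.e.\ on $\RN$, so we must make sure that their restrictions to rays are meaningful and agree with the fibrewise limits. Two facts handle this: the polar formula identifies null sets in $\RN$ with $dr\,d\mu_K$-null sets, and we choose one fixed Borel representative before restricting. The subsequence extraction must also be done once, globally, so that the exceptional $\sigma$-set is a single countable union of null sets. No smoothness of $K$ beyond continuity and positivity of $\|\cdot\|_K$ is needed here, apart from the polar formula itself.
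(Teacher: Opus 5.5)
Your proof is correct and follows the paper's route: weighted $L^p$ limits $F,G$, two-sided bounds on the weights away from the origin, the polar formula, and identification of $\mathcal R_K(u)(\cdot\,\sigma)$ as the weak derivative along a.e.\ ray. You also make explicit the global subsequence extraction and the choice of a Borel representative, which the paper compresses into an appeal to ``closedness of the one-dimensional weak derivative''.

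The one step the paper proves that you leave implicit is injectivity of the map sending an element of $\mathcal C_{K,a,b}^p$ to its representative $u$. It follows at once from your fibrewise identity: if $u=0$ a.e., then $\mathcal R_K(u)=0$ a.e.\ on a.e.\ ray, hence $F=G=0$ by the polar formula. You should record this, because it is what makes $\mathcal R_K(u)$ depend on $u$ alone, and Lemma~\ref{lem:completion-identities} uses it in the form $u\neq0\Rightarrow\int_{\RN}|u|^p\|x\|_K^{-ap}dx>0$.
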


\begin{proof}
Let $\{u_n\}\subset C_c^\infty(\RN\setminus\{o\})$ be a Cauchy
sequence representing an element of $\mathcal C_{K,a,b}^p$. By the
completeness of the weighted $L^p$-spaces, there exist measurable
functions $u$ and $v$ such that
\begin{align}
\int_{\RN}
\frac{|u_n-u|^p}{\|x\|_K^{ap}}dx
&\rightarrow0\ \text{as}\ n\to\infty,
\label{completion-u-convergence}\\
\int_{\RN}
\frac{|\mathcal R_K(u_n)-v|^p}{\|x\|_K^{bp}}dx
&\rightarrow0\ \text{as}\ n\to\infty.
\label{completion-R-convergence}
\end{align}

Fix $0<\rho<R<\infty$ and let
$
\Omega_{\rho,R}
=
\{x\in\RN:\rho<\|x\|_K<R\}.
$
Since the weights $\|x\|_K^{-ap}$ and $\|x\|_K^{-bp}$ are bounded
above and below by positive constants on $\Omega_{\rho,R}$,
\eqref{completion-u-convergence} and
\eqref{completion-R-convergence} imply
\begin{align}
u_n&\rightarrow u
\quad\text{in }L^p(\Omega_{\rho,R})\ \text{as}\ n\to\infty,\label{f-u}\\
\mathcal R_K(u_n)&\rightarrow v
\quad\text{in }L^p(\Omega_{\rho,R})\ \text{as}\ n\to\infty.\label{f-Ru}
\end{align}
Thus, $u\in L_{\mathrm{loc}}^p(\RN\setminus\{o\})$. This
representative is independent of the approximating sequence, since
two representatives arising from equivalent Cauchy sequences agree
on every annulus $\Omega_{\rho,R}$.

For every $n$, one has
$$
\frac{d}{dr}u_n(r\sigma)
=
\mathcal R_K(u_n)(r\sigma).
$$
Since $r^{N-1}$ is bounded above and below by positive constants on
$[\rho,R]$, the anisotropic polar formula together with
\eqref{f-u} and \eqref{f-Ru} yields
\begin{align*}
&\int_{\partial K}\int_\rho^R
|u_n(r\sigma)-u(r\sigma)|^pdrd\mu_K(\sigma)
\rightarrow0\ \text{as}\ n\to\infty,\\
&\int_{\partial K}\int_\rho^R
|\mathcal R_K(u_n)(r\sigma)-v(r\sigma)|^p
drd\mu_K(\sigma)
\rightarrow0\ \text{as}\ n\to\infty.
\end{align*}
The closedness of the one-dimensional weak derivative therefore
gives
$
u(\,\cdot\,\sigma)\in W^{1,p}(\rho,R)
$
and
$
\frac{d}{dr}u(r\sigma)=v(r\sigma)
$
for almost every $r\in(\rho,R)$ and for $\mu_K$-almost every
$\sigma\in\partial K$. Since $\rho$ and $R$ are arbitrary, the
conclusion holds locally on $(0,\infty)$. We define
$
\mathcal R_K(u)=v.
$
The convergence in \eqref{completion-R-convergence} shows that this
extension is continuous.

Finally, the map assigning to an element of
$\mathcal C^p_{K,a,b}$ its representative is injective.
Indeed, if the representative $u$ vanishes almost everywhere, then,
by the last part of the lemma,
$$
v(r\sigma)
=
\frac{d}{dr}u(r\sigma)
=
0
$$
for almost every $r>0$ and for $\mu_K$-almost every
$\sigma\in\partial K$. Hence, by \eqref{completion-u-convergence}
and \eqref{completion-R-convergence},
$$
\|u_n\|_{\mathcal C^p_{K,a,b}}\to0\ \text{as}\ n\to\infty.
$$
Therefore, the corresponding element of
$\mathcal C^p_{K,a,b}$ is the zero element.
\end{proof}

\begin{lemma}\label{lem:completion-identities}
Let $1<p<\infty$, $K\in\Ksc$, and 
$u\in\mathcal C_{K,a,b}^p\setminus\{0\}$. Then
$$
\int_{\RN}
\frac{|\mathcal R_K(u)|^p}{\|x\|_K^{bp}}dx
>0
\quad\text{and}\quad
\int_{\RN}
\frac{|u|^p}{\|x\|_K^{ap}}dx
>0.
$$

\begin{itemize}

\item[(i)]
Suppose that
$
b+1-a>0,
b\le\frac{N-p}{p}.
$
Then
$$
\int_{\RN}
\frac{|u|^p}
{\|x\|_K^{(p-1)a+b+1}}dx
<\infty,
$$
and
\begin{align}\label{completion-CKN-positive}
&\bigg(
\int_{\mathbb R^N}
\frac{|\mathcal R_K(u)|^p}{\|x\|_K^{bp}}dx
\bigg)^{\frac{1}{p}}
\bigg(
\int_{\mathbb R^N}
\frac{|u|^p}{\|x\|_K^{ap}}dx
\bigg)^{\frac{p-1}{p}}
-
\frac{N-1-(p-1)a-b}{p}
\int_{\mathbb R^N}
\frac{|u|^p}{\|x\|_K^{(p-1)a+b+1}}dx
\nonumber
\\
&=
\frac{1}{p}
\int_{\mathbb R^N}
\frac{1}{\|x\|_K^{bp}}
R_p
\Bigg(
-
\Bigg(
\frac{
\int_{\mathbb R^N}
\frac{|\mathcal R_K(u)|^p}{\|x\|_K^{bp}}dx
}{
\int_{\mathbb R^N}
\frac{|u|^p}{\|x\|_K^{ap}}dx
}
\Bigg)^{\frac{1}{p^2}}
u\|x\|_K^{b-a},
\Bigg(
\frac{
\int_{\mathbb R^N}
\frac{|u|^p}{\|x\|_K^{ap}}dx
}{
\int_{\mathbb R^N}
\frac{|\mathcal R_K(u)|^p}{\|x\|_K^{bp}}dx
}
\Bigg)^{\frac{p-1}{p^2}}
\mathcal R_K(u)
\Bigg)dx.
\end{align}

\item[(ii)]
Suppose that
$
b+1-a<0,
b\ge\frac{N-p}{p}.
$
Then
$$
\int_{\RN}
\frac{|u|^p}
{\|x\|_K^{(p-1)a+b+1}}dx
<\infty,
$$
and
\begin{align}\label{completion-CKN-negative}
&\bigg(
\int_{\mathbb R^N}
\frac{|\mathcal R_K(u)|^p}{\|x\|_K^{bp}}dx
\bigg)^{\frac{1}{p}}
\bigg(
\int_{\mathbb R^N}
\frac{|u|^p}{\|x\|_K^{ap}}dx
\bigg)^{\frac{p-1}{p}}
-
\frac{(p-1)a+b+1-N}{p}
\int_{\mathbb R^N}
\frac{|u|^p}{\|x\|_K^{(p-1)a+b+1}}dx
\nonumber
\\
&=
\frac{1}{p}
\int_{\mathbb R^N}
\frac{1}{\|x\|_K^{bp}}
R_p
\Bigg(
\Bigg(
\frac{
\int_{\mathbb R^N}
\frac{|\mathcal R_K(u)|^p}{\|x\|_K^{bp}}dx
}{
\int_{\mathbb R^N}
\frac{|u|^p}{\|x\|_K^{ap}}dx
}
\Bigg)^{\frac{1}{p^2}}
u\|x\|_K^{b-a},
\Bigg(
\frac{
\int_{\mathbb R^N}
\frac{|u|^p}{\|x\|_K^{ap}}dx
}{
\int_{\mathbb R^N}
\frac{|\mathcal R_K(u)|^p}{\|x\|_K^{bp}}dx
}
\Bigg)^{\frac{p-1}{p^2}}
\mathcal R_K(u)
\Bigg)dx.
\end{align}

\item[(iii)]
Suppose that $a=b+1$.
Then
\begin{align}\label{completion-critical-Hardy}
&\int_{\mathbb R^N}
\frac{|\mathcal R_K(u)|^p}{\|x\|_K^{bp}}dx
-
\bigg|
\frac{N-p(b+1)}{p}
\bigg|^p
\int_{\mathbb R^N}
\frac{|u|^p}{\|x\|_K^{p(b+1)}}dx\notag\\
&=
\int_{\RN}
\frac1{\|x\|_K^{bp}}
R_p\bigg(
-\frac{N-p(b+1)}p\frac{u}{\|x\|_K},
\mathcal R_K(u)
\bigg)dx.
\end{align}
\end{itemize}
\end{lemma}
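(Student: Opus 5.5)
The strategy is to prove the statement by density. We approximate $u\in\mathcal C_{K,a,b}^p$ by $u_n\in C_c^\infty(\RN\setminus\{o\})$ in the norm $\|\cdot\|_{\mathcal C^p_{K,a,b}}$, apply the identities \eqref{f0-1}, \eqref{f1-1} and Corollary \ref{weighted-Hardy-identity} (with $\lambda=bp$) to $u_n$, and pass to the limit term by term. By Lemma \ref{lem:completion-representatives}, $u_n\|x\|_K^{-a}\to u\|x\|_K^{-a}$ and $\mathcal R_K(u_n)\|x\|_K^{-b}\to\mathcal R_K(u)\|x\|_K^{-b}$ in $L^p(\RN)$. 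Hence the two factors on the left converge.

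The positivity statements come first. If $\int|u|^p\|x\|_K^{-ap}dx=0$, then $u=0$ a.e., and the injectivity argument at the end of the proof of Lemma \ref{lem:completion-representatives} gives $u=0$ in $\mathcal C^p_{K,a,b}$. If instead $\int|\mathcal R_K(u)|^p\|x\|_K^{-bp}dx=0$, then for $\mu_K$-a.e.\ $\sigma$ the absolutely continuous function $r\mapsto u(r\sigma)$ is constant, say $c(\sigma)$. Then $\int_{\partial K}|c|^p\int_0^\infty r^{N-1-ap}dr\,d\mu_K<\infty$, and since $\int_0^\infty r^{N-1-ap}dr=\infty$ for every $a$, this forces $c=0$, so $u=0$.

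Next comes the middle integral $\int|u|^p\|x\|_K^{-((p-1)a+b+1)}dx$. Note that $(p-1)a+b+1=(p-1)a+\big(b+1\big)$, so $|u|^p\|x\|_K^{-(p-1)a-b-1}=|u|^{p-1}\|x\|_K^{-(p-1)a}\cdot|u|\|x\|_K^{-(b+1)}$. I would first establish the Hardy bound $\int|u_n|^p\|x\|_K^{-p(b+1)}dx\le C\int|\mathcal R_K(u_n)|^p\|x\|_K^{-bp}dx$, which is Corollary \ref{weighted-Hardy-identity} with $\lambda=bp$ and is valid when $N\neq p(b+1)$. Hölder's inequality then bounds the middle integral by $\|u\|x\|_K^{-a}\|_p^{p-1}\|u\|x\|_K^{-(b+1)}\|_p$. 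Applied to differences, the same bound shows that the middle integral is continuous along the sequence, so its limit is finite. This is fine when $b<\frac{N-p}{p}$ or $b>\frac{N-p}{p}$. The delicate point is the boundary case $b=\frac{N-p}{p}$ in (i) and (ii), where the Hardy constant vanishes. There I would instead use inequality \eqref{f0-2}, respectively \eqref{f1-2}: since $u_n$ is Cauchy in the $\mathcal C^p_{K,a,b}$ norm, the left side of \eqref{f0-2} applied to $|u_n-u_m|$-type combinations controls the middle term. Since the weight integrand is not bilinear, I would apply the identity to $u_n-u_m$ to get Cauchy-ness of $u_n\|x\|_K^{-((p-1)a+b+1)/p}$ in $L^p$, and then use Fatou and a.e.\ convergence along a subsequence. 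The constant $N-1-(p-1)a-b$ is positive in region (i), and its negative is positive in region (ii), so this works.

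For the remainder terms, the arguments of $R_p$ converge a.e.\ along a subsequence, because the normalizing ratios converge by the positivity just proved. Since $R_p\ge0$, Fatou gives ``$\ge$'' in the limit identity. For equality, I would note that $R_p(s,t)\|x\|_K^{-bp}$ is dominated by $C\big(|t|^p+|s|^p\big)\|x\|_K^{-bp}$, with $|s|^p\|x\|_K^{-bp}=c|u|^p\|x\|_K^{-ap}$ and $|t|^p\|x\|_K^{-bp}$ built from the convergent $L^p$ quantities. A generalized dominated convergence theorem (dominating functions converging in $L^1$) then yields convergence of the remainder integral. The same scheme handles (iii): there the weight of $s$ is $|u|^p\|x\|_K^{-p(b+1)}$, which is controlled by the Hardy inequality for $u_n-u_m$, and when $N=p(b+1)$ the identity is trivial. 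The main obstacle I anticipate is exactly the finiteness and convergence of the middle weighted integral near the threshold $b=\frac{N-p}{p}$.
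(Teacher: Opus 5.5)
Your proposal is correct and follows the paper's proof. The steps match: approximation by $C_c^\infty(\mathbb R^N\setminus\{o\})$ functions, positivity via the argument that $u$ would be constant along rays, finiteness of the middle weighted integral by applying the CKN inequality to $u_n-u_m$, and passage to the limit in the $R_p$-remainder using a.e.\ convergence along a subsequence and the generalized dominated convergence theorem with $R_p(s,t)\le 2p(|s|^p+|t|^p)$. Two minor simplifications: the paper applies the CKN inequality to $u_n-u_m$ throughout regions (i) and (ii), so your Hardy--H\"older detour and your concern about the threshold $b=\frac{N-p}{p}$ are unnecessary; and in (iii) the weight $\|x\|_K^{-p(b+1)}$ is just $\|x\|_K^{-ap}$, so convergence follows directly from the $\mathcal C^p_{K,b+1,b}$ norm without any Hardy inequality.
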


\begin{proof}
Since $u\ne0$, one has
$$
\int_{\RN}
\frac{|u|^p}{\|x\|_K^{ap}}dx
>0.
$$
Suppose that
$$
\int_{\RN}
\frac{|\mathcal R_K(u)|^p}{\|x\|_K^{bp}}dx
=
0.
$$
Then
$
\mathcal R_K(u)=0
$
almost everywhere in $\RN$. By Lemma
\ref{lem:completion-representatives},
$$
\frac{d}{dr}u(r\sigma)=0
$$
for almost every $r>0$ and for $\mu_K$-almost every
$\sigma\in\partial K$. Hence,
$
u(r\sigma)=\Phi(\sigma)
$
for some measurable function $\Phi$ on $\partial K$.

If $\Phi\ne0$ on a set of positive $\mu_K$-measure, then
\begin{align*}
\int_{\RN}
\frac{|u|^p}{\|x\|_K^{ap}}dx
&=
\int_{\partial K}
|\Phi(\sigma)|^pd\mu_K(\sigma)
\int_0^\infty
r^{N-1-ap}dr
=
\infty,
\end{align*}
which contradicts
$u\in\mathcal C_{K,a,b}^p$. Therefore,
$\Phi=0$ almost everywhere and hence $u=0$, a contradiction. Thus,
$$
\int_{\RN}
\frac{|\mathcal R_K(u)|^p}{\|x\|_K^{bp}}dx
>0.
$$

We first prove part $(i)$. Let
$\{u_n\}\subset C_c^\infty(\RN\setminus\{o\})$ converge to $u$ in
$\mathcal C_{K,a,b}^p$. Applying the anisotropic $L^p$-CKN
inequality to $u_n-u_m$, we obtain
\begin{align*}
&\frac{N\!-\!1\!-\!(p\!-\!1)a\!-\!b}{p}\!
\int_{\RN}\!
\frac{|u_n-u_m|^p}
{\|x\|_K^{(p-1)a+b+1}}dx
\le
\bigg(
\int_{\RN}
\frac{|\mathcal R_K(u_n-u_m)|^p}
{\|x\|_K^{bp}}dx
\bigg)^{\frac1p}
\bigg(
\int_{\RN}
\frac{|u_n-u_m|^p}
{\|x\|_K^{ap}}dx
\bigg)^{\frac{p-1}{p}}.
\end{align*}
Consequently, $\{u_n\}$ is Cauchy in
$$
L^p\bigg(
\RN,
\frac{dx}{\|x\|_K^{(p-1)a+b+1}}
\bigg).
$$
Its limit agrees with the representative $u$ from Lemma
\ref{lem:completion-representatives}. Hence,
$$
\int_{\RN}
\frac{|u|^p}
{\|x\|_K^{(p-1)a+b+1}}dx
<\infty.
$$

Moreover,
\begin{align}
\int_{\RN}
\frac{|\mathcal R_K(u_n)|^p}{\|x\|_K^{bp}}dx
&\rightarrow
\int_{\RN}
\frac{|\mathcal R_K(u)|^p}{\|x\|_K^{bp}}dx\ \text{as}\ n\to\infty,
\label{completion-derivative-integral}\\
\int_{\RN}
\frac{|u_n|^p}{\|x\|_K^{ap}}dx
&\rightarrow
\int_{\RN}
\frac{|u|^p}{\|x\|_K^{ap}}dx\ \text{as}\ n\to\infty.
\label{completion-function-integral}
\end{align}
Therefore, for all sufficiently large $n$,
$$
\int_{\RN}
\frac{|\mathcal R_K(u_n)|^p}{\|x\|_K^{bp}}dx
>0
\quad\text{and}\quad
\int_{\RN}
\frac{|u_n|^p}{\|x\|_K^{ap}}dx
>0.
$$

By \eqref{completion-derivative-integral} and
\eqref{completion-function-integral}, one has
\begin{align}
&-\Bigg(
\frac{
\int_{\RN}
\frac{|\mathcal R_K(u_n)|^p}{\|x\|_K^{bp}}dx
}{
\int_{\RN}
\frac{|u_n|^p}{\|x\|_K^{ap}}dx
}
\Bigg)^{\frac1{p^2}}
u_n\|x\|_K^{b-a}
\rightarrow
-\Bigg(
\frac{
\int_{\RN}
\frac{|\mathcal R_K(u)|^p}{\|x\|_K^{bp}}dx
}{
\int_{\RN}
\frac{|u|^p}{\|x\|_K^{ap}}dx
}
\Bigg)^{\frac1{p^2}}
u\|x\|_K^{b-a}
\label{completion-first-argument}
\end{align}
and
\begin{align}
&\Bigg(
\frac{
\int_{\RN}
\frac{|u_n|^p}{\|x\|_K^{ap}}dx
}{
\int_{\RN}
\frac{|\mathcal R_K(u_n)|^p}{\|x\|_K^{bp}}dx
}
\Bigg)^{\frac{p-1}{p^2}}
\mathcal R_K(u_n)
\rightarrow
\Bigg(
\frac{
\int_{\RN}
\frac{|u|^p}{\|x\|_K^{ap}}dx
}{
\int_{\RN}
\frac{|\mathcal R_K(u)|^p}{\|x\|_K^{bp}}dx
}
\Bigg)^{\frac{p-1}{p^2}}
\mathcal R_K(u)
\label{completion-second-argument}
\end{align}
in
$
L^p\Big(
\RN,
\frac{dx}{\|x\|_K^{bp}}
\Big)\ \text{as}\ n\to\infty.
$
Indeed,
\begin{align*}
&\int_{\RN}
\frac{
|u_n\|x\|_K^{b-a}
-u\|x\|_K^{b-a}|^p
}
{\|x\|_K^{bp}}dx
=
\int_{\RN}
\frac{|u_n-u|^p}{\|x\|_K^{ap}}dx
\rightarrow0\ \text{as}\ n\to\infty.
\end{align*}

By Young's inequality,
$$
p|s|^{p-1}|t|
\le
(p-1)|s|^p+|t|^p.
$$
It follows from the definition of $R_p$ that
$$
0
\le
R_p(s,t)
\le
2|t|^p+2(p-1)|s|^p
\le
2p\big(|s|^p+|t|^p\big).
$$
By \eqref{completion-first-argument} and
\eqref{completion-second-argument}, the two arguments of $R_p$
converge in
$
L^p\Big(
\RN,
\frac{dx}{\|x\|_K^{bp}}
\Big).
$
After taking a subsequence if necessary, the two arguments converge
almost everywhere in $\RN$. By the continuity of $R_p$, the
corresponding remainders also converge almost everywhere. The estimate
above and the weighted $L^p$-convergence allow us to apply the
generalized dominated convergence theorem. Hence, taking the limit in
\eqref{f0-1} yields \eqref{completion-CKN-positive}.

The proof of part $(ii)$ is identical, using \eqref{f1-1} instead
of \eqref{f0-1}.

For part $(iii)$, let
$\{u_n\}\subset C_c^\infty(\RN\setminus\{o\})$ converge to $u$ in
$\mathcal C_{K,b+1,b}^p$. Then
\begin{align*}
-\frac{N-p(b+1)}p\frac{u_n}{\|x\|_K}
&\rightarrow
-\frac{N-p(b+1)}p\frac{u}{\|x\|_K},\\
\mathcal R_K(u_n)
&\rightarrow
\mathcal R_K(u)
\end{align*}
in
$
L^p\Big(
\RN,
\frac{dx}{\|x\|_K^{bp}}
\Big)\ \text{as}\ n\to\infty,
$
because
\begin{align*}
\int_{\RN}
\frac{
\big|
\frac{u_n-u}{\|x\|_K}
\big|^p
}
{\|x\|_K^{bp}}dx
&=
\int_{\RN}
\frac{|u_n-u|^p}
{\|x\|_K^{p(b+1)}}dx
\rightarrow0\ \text{as}\ n\to\infty.
\end{align*}
Using again the continuity and growth estimate for $R_p$, we may
pass to the limit in the weighted anisotropic Hardy identity. This
gives \eqref{completion-critical-Hardy}.
\end{proof}

The sharp constants in the inequalities \eqref{f0-2} and \eqref{f1-2} are generally not attained
in $C_c^\infty(\mathbb R^N\setminus\{o\})$. When $a\ne b+1$, they are attained in the
completion space $\mathcal C_{K,a,b}^p$ by the functions solving the
first-order equality equation associated with the $R_p$ remainder. When $a=b+1$, the sharp constant is not attained by any
nonzero function in the completion space.

\begin{theorem}\label{t4}
Let $1<p<\infty$, $K\in\Ksc$, and $a,b\in\R$.
Then the following statements hold.
\noindent
($i$) Assume that
$
b+1-a>0
\ \text{and}\
b\le \frac{N-p}{p}.
$
Then the constant
$$
C=\frac{N-1-(p-1)a-b}{p}
$$
in \eqref{f0-2} is sharp. For nonzero $u\in\mathcal C_{K,a,b}^p$, equality holds if and only if
$$
u(x)
=
\Phi\bigg(\frac{x}{\|x\|_K}\bigg)
\exp\bigg(
-\frac{\lambda}{b+1-a}\|x\|_K^{b+1-a}
\bigg),
\quad
\lambda>0,
$$
where
$
\Phi\in L^p(\partial K,d\mu_K)\setminus\{0\}.
$

\noindent
($ii$) Assume that
$
b+1-a<0
\ \text{and}\
b\ge \frac{N-p}{p}.
$
Then the constant
$$
C=\frac{(p-1)a+b+1-N}{p}
$$
in \eqref{f1-2} is sharp. For nonzero $u\in\mathcal C_{K,a,b}^p$, equality holds if and only if
$$
u(x)
=
\Phi\bigg(\frac{x}{\|x\|_K}\bigg)
\exp\bigg(
\frac{\lambda}{b+1-a}\|x\|_K^{b+1-a}
\bigg),
\quad
\lambda>0,
$$
where
$
\Phi\in L^p(\partial K,d\mu_K)\setminus\{0\}.
$

\noindent
($iii$)
Assume that $a=b+1$. Then the sharp constant in the product
CKN inequality is
$$
C
=
\frac{|N-p(b+1)|}{p},
$$
while the sharp constant in the corresponding weighted Hardy
inequality is
$$
C^p
=
\left|
\frac{N-p(b+1)}{p}
\right|^p.
$$
The sharp constant is not attained by any
$u\in\mathcal C^p_{K,b+1,b}\setminus\{0\}$.
\end{theorem}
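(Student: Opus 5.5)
The plan is to read everything off the completion-space identities of Lemma \ref{lem:completion-identities}, in which the deficit of the inequality is an explicit $R_p$-remainder. Sharpness will follow from exhibiting functions that make this remainder vanish. In the critical case we instead show that it cannot vanish, and we build approximate optimizers.

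\emph{Part (i).} For nonzero $u\in\mathcal C^p_{K,a,b}$, identity \eqref{completion-CKN-positive} shows that equality in \eqref{f0-2} holds iff the remainder vanishes a.e. Since $R_p(s,t)=0$ iff $s=t$, this means
\[
\mathcal R_K(u)=-\mu\,\|x\|_K^{b-a}u \quad\text{a.e.},
\]
where $\mu=(I_1/I_2)^{1/p}>0$ with $I_1=\int|\mathcal R_K u|^p\|x\|_K^{-bp}$ and $I_2=\int|u|^p\|x\|_K^{-ap}$ (both positive by the lemma). By Lemma \ref{lem:completion-representatives}, for $\mu_K$-a.e. $\sigma$ the function $r\mapsto u(r\sigma)$ is locally absolutely continuous and solves $\frac{d}{dr}u(r\sigma)=-\mu r^{b-a}u(r\sigma)$. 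Hence
\[
u(r\sigma)=\Phi(\sigma)\exp\Big(-\tfrac{\mu}{b+1-a}r^{b+1-a}\Big).
\]
Conversely, fix $\lambda>0$ and $\Phi\in L^p(\partial K,d\mu_K)\setminus\{0\}$, and set $u=\Phi(\sigma)e^{-\lambda r^{b+1-a}/(b+1-a)}$. I will carry out three checks.
\begin{itemize}
\item[(a)] Finiteness of the two weighted integrals. By the polar formula they factor as $\|\Phi\|_p^p$ times the one-dimensional integrals $\int_0^\infty r^{N-1-ap}e^{-c r^{\gamma}}dr$ and $\lambda^p\int_0^\infty r^{N-1-bp+(b-a)p}e^{-cr^\gamma}dr$, where $\gamma=b+1-a>0$. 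These converge at infinity because of the exponential. Near $0$ we need $N-ap>0$, which follows from $N-p(b+1)\ge0$ and $a<b+1$.
\item[(b)] $u$ lies in $\mathcal C^p_{K,a,b}$. This requires approximating by $C_c^\infty(\RN\setminus\{o\})$ functions. I would use cutoffs $\eta_\epsilon(r)$ in $r$, supported in $\epsilon<r<1/\epsilon$ with $|\eta'|\lesssim 1/r$, together with mollification of $\Phi$ on $\partial K$ (the angular variable only enters multiplicatively, since $\mathcal R_K$ differentiates in $r$ only). The cutoff error at $0$ is controlled by $\int_\epsilon^{2\epsilon} r^{N-1-bp-p}dr$, which tends to $0$ if $N-p(b+1)>0$. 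The borderline case $b=\frac{N-p}{p}$ needs a logarithmic cutoff, and I expect this density step to be \textbf{the main technical obstacle}. At infinity the exponential makes everything trivial.
\item[(c)] Equality. Compute $\mu$ for this $u$. Since $\mathcal R_K u=-\lambda r^{b-a}u$ pointwise, we get $I_1=\lambda^p I_2$, so $\mu=\lambda$ and the remainder vanishes. Hence equality holds, and the constant is sharp since it is attained.
\end{itemize}

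\emph{Part (ii)} is identical, using \eqref{completion-CKN-negative}. The equation becomes $\mathcal R_K u=\mu r^{b-a}u$, whose solutions are $\Phi e^{\lambda r^{\gamma}/\gamma}$ with $\gamma<0$; these decay at $0$ and tend to $\Phi$ at infinity. Integrability at infinity requires $N-ap<0$, which follows from $a>b+1\ge N/p$.

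\emph{Part (iii).} Set $c=\frac{N-p(b+1)}{p}$. By Young's inequality, the product inequality with constant $|c|$ is equivalent to the Hardy inequality with constant $|c|^p$, because for $a=b+1$ the right-hand weight coincides with $\|x\|_K^{-ap}$. The Hardy inequality itself follows from \eqref{completion-critical-Hardy}.

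Non-attainment. Equality in the product form forces $I_1=|c|^pI_2$ and equality in Hardy, so the remainder in \eqref{completion-critical-Hardy} vanishes. This gives $\mathcal R_K u=-c\,u/r$, hence $u=\Phi(\sigma)r^{-c}$. Then $\int|u|^pr^{-p(b+1)}dx=\|\Phi\|_p^p\int_0^\infty r^{-1}dr=\infty$, so $\Phi=0$. If $c=0$, then $\mathcal R_K u=0$ and $u=0$ by Lemma \ref{lem:completion-identities}.

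Sharpness. Take the near-optimizers $u_\epsilon=r^{-c}\phi_\epsilon(\log r)$ with $\phi_\epsilon(t)=\phi(\epsilon t)$ for a fixed $\phi\in C_c^\infty(\mathbb R)$. Then $\mathcal R_K u_\epsilon=r^{-c-1}(-c\phi_\epsilon+\epsilon\phi'(\epsilon\log r))$. The ratio of the two sides equals
\[
\frac{\int|-c\phi_\epsilon+\epsilon\phi'|^pdt}{\int|\phi_\epsilon|^pdt}\to|c|^p,
\]
since the $\epsilon\phi'$ term contributes relatively $O(\epsilon)$ after substituting $s=\epsilon t$. If $c=0$ the same sequence still gives ratio $\to0$.
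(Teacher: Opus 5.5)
Your proposal is correct and follows essentially the same route as the paper: the $R_p$-remainder in the completion identities is forced to vanish, the resulting first-order equation is integrated along rays via Lemma~\ref{lem:completion-representatives}, and the extremals are placed in $\mathcal C^p_{K,a,b}$ by angular approximation plus radial cutoffs. In the critical case you likewise use a logarithmic truncation of $\|x\|_K^{-\frac{N-p(b+1)}{p}}$ for sharpness (your $\phi(\epsilon\log r)$ is a reparametrization of the paper's $\eta_\varepsilon$) and the divergence of $\int_0^\infty\frac{dr}{r}$ for nonattainment. The borderline density step you flag is handled in the paper by a logarithmic cutoff used throughout ($\eta_\varepsilon=1$ on $[\varepsilon,\varepsilon^{-1}]$, $\eta_\varepsilon=0$ outside $(\varepsilon^2,\varepsilon^{-2})$, $|\eta_\varepsilon'|\le M/(r\log(1/\varepsilon))$), whose error is of order $|\log\varepsilon|^{1-p}\to0$ since $p>1$; in part (ii) this estimate is applied at infinity instead of at the origin, where the exponential factor now supplies the decay.
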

\begin{proof}
\noindent
($i$)
Assume that
$
b+1-a>0
\ \text{and}\
b\le \frac{N-p}{p}.
$
Assume that
$u\in\mathcal C_{K,a,b}^p\setminus\{0\}$ attains equality in
\eqref{f0-2}. By \eqref{completion-CKN-positive}, one has
\begin{align*}
R_p\Bigg(
-\Bigg(
\frac{
\int_{\RN}
\frac{|\mathcal R_K(u)|^p}{\|x\|_K^{bp}}dx
}{
\int_{\RN}
\frac{|u|^p}{\|x\|_K^{ap}}dx
}
\Bigg)^{\frac1{p^2}}
u\|x\|_K^{b-a},
\Bigg(
\frac{
\int_{\RN}
\frac{|u|^p}{\|x\|_K^{ap}}dx
}{
\int_{\RN}
\frac{|\mathcal R_K(u)|^p}{\|x\|_K^{bp}}dx
}
\Bigg)^{\frac{p-1}{p^2}}
\mathcal R_K(u)
\Bigg)
=
0
\end{align*}
almost everywhere in $\RN$. It follows that
$
\mathcal R_K(u)
=
-\lambda\|x\|_K^{b-a}u,
$
where
$$
\lambda
=
\Bigg(
\frac{
\int_{\RN}
\frac{|\mathcal R_K(u)|^p}{\|x\|_K^{bp}}dx
}{
\int_{\RN}
\frac{|u|^p}{\|x\|_K^{ap}}dx
}
\Bigg)^{\frac1p}
>0.
$$

By Lemma \ref{lem:completion-representatives}, for
$\mu_K$-almost every $\sigma\in\partial K$, the function
$r\mapsto u(r\sigma)$ is locally absolutely continuous and satisfies
$$
\frac{d}{dr}u(r\sigma)
=
-\lambda r^{b-a}u(r\sigma)
$$
for almost every $r>0$. Solving this equation gives
$$
u(r\sigma)
=
\Phi(\sigma)
\exp\bigg(
-\frac{\lambda}{b+1-a}r^{b+1-a}
\bigg)
$$
for $\mu_K$-almost every $\sigma\in\partial K$. Therefore,
$$
u(x)
=
\Phi\bigg(
\frac{x}{\|x\|_K}
\bigg)
\exp\bigg(
-\frac{\lambda}{b+1-a}
\|x\|_K^{b+1-a}
\bigg).
$$

Since
$
a<b+1\le\frac Np,
$
one has $N-ap>0$. Hence,
\begin{align*}
\int_0^\infty\!
\exp\bigg(
\!-\!\frac{p\lambda}{b+1-a}r^{b+1-a}
\bigg)
r^{N-1-ap}dr
=
\frac1{b+1-a}
\bigg(
\frac{p\lambda}{b+1-a}
\bigg)^{-\frac{N-ap}{b+1-a}}
\Gamma\bigg(
\frac{N-ap}{b+1-a}
\bigg)
\in(0,\infty),
\end{align*}
here $\Gamma$ denotes the Gamma function.
By the anisotropic polar formula, one has
\begin{align*}
\int_{\partial K}
|\Phi(\sigma)|^pd\mu_K(\sigma)
\times
\int_0^\infty
\exp\bigg(
-\frac{p\lambda}{b+1-a}r^{b+1-a}
\bigg)
r^{N-1-ap}dr=\int_{\RN}
\frac{|u|^p}{\|x\|_K^{ap}}dx<\infty.
\end{align*}
Therefore,
$
\Phi\in L^p(\partial K,d\mu_K).
$
Moreover, $\Phi\ne0$ because $u\ne0$.

It remains to prove that these extremal functions belong to the completion
space. 
Since $
a<b+1\le \frac{N}{p},
$
then $N-ap>0$ and hence,
\begin{align*}
\int_{\mathbb R^N}|u|^p r^{-ap}dx
&=
\|\Phi\|_{L^p(\partial K,d\mu_K)}^p
\int_0^\infty
\exp\bigg(
-\frac{p\lambda}{b+1-a}r^{b+1-a}
\bigg)
r^{N-1-ap}dr
\\
&=
\|\Phi\|_{L^p(\partial K,d\mu_K)}^p
\frac{1}{b+1-a}
\bigg(
\frac{p\lambda}{b+1-a}
\bigg)^{-\frac{N-ap}{b+1-a}}
\Gamma\bigg(
\frac{N-ap}{b+1-a}
\bigg)
<\infty.
\end{align*}
Therefore,
$$
\int_{\mathbb R^N}
|\mathcal R_K(u)|^p r^{-bp}dx=\lambda^p\int_{\mathbb R^N}
 |u|^p r^{-ap}
dx<\infty.
$$

Let
$
\Phi_j=G_j|_{\partial K}$
such that
$
\Phi_j\to\Phi$
in $L^p(\partial K,d\mu_K)
$ for $G_j\in C^\infty(\mathbb R^N)$.
Since \(K\in\mathcal K^{ss}_{(o)}\), then
$
\sigma_K(x)\in C^1(\mathbb R^N\setminus\{o\})
$. Hence,
$\Phi_j(\sigma_K(x))
=
G_j(\sigma_K(x))
\in C^1(\mathbb R^N\setminus\{o\}).
$
Define
$$
u_j(x)
=
\Phi_j\bigg(\frac{x}{\|x\|_K}\bigg)
\exp\bigg(
-\frac{\lambda}{b+1-a}\|x\|_K^{b+1-a}
\bigg).
$$
Since $\Phi_j\circ\sigma_K$ is positively homogeneous of degree
zero, Euler's identity gives
$
\mathcal R_K(\Phi_j\circ\sigma_K)=0.
$
Consequently,
$
\mathcal R_K(u_j)-\mathcal R_K(u)
=
-\lambda\|x\|_K^{b-a}(u_j-u).
$
It follows that
$$
\int_{\mathbb R^N}|u_j-u|^p r^{-ap}dx
+
\int_{\mathbb R^N}
|\mathcal R_K(u_j)-\mathcal R_K(u)|^p r^{-bp}dx
\to0
$$
as $j\to\infty$. Thus, $
u_j\to u$
 as $j\to\infty$ in $\mathcal C^p_{K,a,b}$.

Let $0<\varepsilon<1$. 
Define
$\eta_\varepsilon\in C_c^\infty(0,\infty)$ such that
$0\le\eta_\varepsilon\le1$, 
$$\eta_\varepsilon(r)=
\begin{cases}
1, & \varepsilon\le r\le \varepsilon^{-1},\\
0, & 0<r\le \varepsilon^2 \ \text{or}\ r\ge \varepsilon^{-2},
\end{cases}$$
and
$$
|\eta_\varepsilon'(r)|
\le
\frac{M}{r\log(1/\varepsilon)}
\quad\text{on }
(\varepsilon^2,\varepsilon)\cup(\varepsilon^{-1},\varepsilon^{-2})
$$
where the constant $M>0$ is independent of $\varepsilon$.
Let
$
u_j^\varepsilon(x)
=
\eta_\varepsilon(\|x\|_K)u_j(x).
$
Let
$
\Omega_\varepsilon
=
\big\{
x\in\mathbb R^N:\frac{\varepsilon^2}{2}<\|x\|_K<\frac{2}{\varepsilon^2}
\big\}.
$
Then
$
\operatorname{supp} u_j^\varepsilon
\subset
\overline{\Omega_\varepsilon}
\subset
\mathbb R^N\setminus\{o\}.
$
Moreover, since
$
\Phi_j\big(\sigma_K(x)\big)\in C^1(\mathbb R^N\setminus\{o\})$ and $
\eta_\varepsilon(\|x\|_K)\in C^1(\mathbb R^N\setminus\{o\}),
$
we have
$
u_j^\varepsilon\in W^{1,p}_0(\Omega_\varepsilon).
$
Since $0\le \eta_\varepsilon\le1$ and
$\eta_\varepsilon(r)\to1$ as $\varepsilon\to 0$ for $r>0$, then the dominated convergence
theorem gives
$$
\int_{\mathbb R^N}
|u_j^\varepsilon-u_j|^p r^{-ap}dx
\to0\ \
\mathrm{and}\ \
\int_{\mathbb R^N}|\eta_\varepsilon(r)-1|^p
|\mathcal R_K(u_j)|^p
r^{-bp}dx
\to0
$$
as $\varepsilon\to0$.
As
$
\mathcal R_K(u_j^\varepsilon)-\mathcal R_K(u_j)
=
\big(\eta_\varepsilon(r)-1\big)\mathcal R_K(u_j)
+
\eta_\varepsilon'(r)u_j
$ and 
$
|c+d|^p\le 2^{p-1}\big(|c|^p+|d|^p\big),$ $c,d\in\R,$
then
\begin{align*}
\int_{\mathbb R^N}\!
|\mathcal R_K(u_j^\varepsilon)\!-\!\mathcal R_K(u_j)|^p r^{-bp}dx
\!\le\!
2^{p-1}\!\!\int_{\mathbb R^N}\!
|\eta_\varepsilon(r)\!-\!1|^p
|\mathcal R_K(u_j)|^p r^{-bp}dx
\!+\!
2^{p-1}\!\!\int_{\mathbb R^N}\!
|\eta_\varepsilon'(r)|^p
|u_j|^p r^{-bp}dx.
\end{align*}
As $\varepsilon\to 0$, one has
\begin{align*}
\int_{\{\varepsilon^2<\|x\|_K<\varepsilon\}}
|\eta_\varepsilon'(r)|^p|u_j|^p r^{-bp}dx
&\le
\frac{M^p\|\Phi_j\|_{L^p(\partial K,d\mu_K)}^p}{|\log\varepsilon|^p}
\int_{\varepsilon^2}^{\varepsilon}
\exp\bigg(
-\frac{p\lambda r^{b+1-a}}{b+1-a}
\bigg)
r^{N-1-p(b+1)}dr\\
&\le
\frac{M'}{|\log\varepsilon|^p}
\int_{\varepsilon^2}^{\varepsilon}\frac{dr}{r}
=
\frac{M'}{|\log\varepsilon|^{p-1}}
\to0\ \mathrm{for\ some\ constant}\ M'>0; 
\end{align*}
\begin{align*}
\int_{\{\varepsilon^{-1}\!<\|x\|_K<\varepsilon^{-2}\}}
\!|\eta_\varepsilon'(r)|^p|u_j|^p r^{-bp}dx
\!\le\!
\frac{M^p\|\Phi_j\|_{L^p(\partial K,d\mu_K)}^p}{|\log\varepsilon|^p}
\!\int_{\varepsilon^{-1}}^{\varepsilon^{-2}}\!
\!\exp\bigg(
\!-\!\frac{p\lambda r^{b+1-a}}{b+1-a}
\bigg)
r^{N-1-p(b+1)}dr
\to0.
\end{align*}
Hence,
$$
\int_{\mathbb R^N}
|u_j^\varepsilon-u_j|^p r^{-ap}dx
+
\int_{\mathbb R^N}
|\mathcal R_K(u_j^\varepsilon)-\mathcal R_K(u_j)|^p
r^{-bp}dx
\to0\ \mathrm{as}\ \varepsilon\to 0.
$$ 

Finally, since $u_j^\varepsilon$ is supported in $\Omega_\varepsilon$ and
the weights $r^{-ap}$ and $r^{-bp}$ are bounded above and
below on $\Omega_\varepsilon$, there exists a
sequence
$
w_{j,\varepsilon,n}\in C_c^\infty(\Omega_\varepsilon)
\subset C_c^\infty(\mathbb R^N\setminus\{o\})
$
such that
$$
\int_{\mathbb R^N}
|w_{j,\varepsilon,n}-u_j^\varepsilon|^p r^{-ap}dx
+
\int_{\mathbb R^N}
|\mathcal R_K(w_{j,\varepsilon,n}-u_j^\varepsilon)|^p r^{-bp}dx
\to0
$$
as $n\to\infty$.
Combining this with the convergences above and using a diagonal argument, we obtain
$
u\in \mathcal C_{K,a,b}^p.
$
Thus, the equality is attained
in $\mathcal C_{K,a,b}^p$, and the constant
$$
C=
\frac{|N-1-(p-1)a-b|}{p}
$$
is sharp.

Moreover,
$
\mathcal R_K(u)
=
-\lambda\|x\|_K^{b-a}u,
$
and hence
\begin{align*}
\int_{\RN}
\frac{|\mathcal R_K(u)|^p}{\|x\|_K^{bp}}dx
=
\lambda^p
\int_{\RN}
\frac{|u|^p}{\|x\|_K^{ap}}dx.
\end{align*}
Therefore, the parameter appearing in the $R_p$-remainder in
\eqref{completion-CKN-positive} is precisely $\lambda$, and the
remainder vanishes. Thus, every function in the stated family
attains equality, and the constant is sharp.

\noindent
($ii$)
Conversely, let
$\Phi\in L^p(\partial K,d\mu_K)\backslash\{0\}$.
Let
$\Phi_j=G_j|_{\partial K}$,
where
$G_j\in C^\infty(\mathbb R^N)$ and
$
\Phi_j\to\Phi
\ \text{in }L^p(\partial K,d\mu_K)
$ as $n\to\infty$
and define
$$
u_j(x)
=
\Phi_j\left(
\frac{x}{\|x\|_K}
\right)
\exp\left(
\frac{\lambda}{b+1-a}
\|x\|_K^{b+1-a}
\right).
$$
As in part ($i$),
$
\mathcal R_K(\Phi_j\circ\sigma_K)=0,
$
and therefore,
$
\mathcal R_K(u_j)-\mathcal R_K(u)
=
\lambda\|x\|_K^{b-a}(u_j-u).
$
Hence
$
u_j\to u
\ \text{in }\mathcal C^p_{K,a,b}
$ as $n\to\infty$.

It remains to verify the radial cut-off near the origin and at
infinity. Let $\eta_\varepsilon$ be the cut-off function used in
part ($i$).
Since
$
a>b+1\ge\frac Np,
$
one has $N-ap<0$.  Hence,
$$
\int_0^\infty
\exp\bigg(
\frac{p\lambda}{b+1-a}r^{b+1-a}
\bigg)
r^{N-1-ap}dr
\in(0,\infty).
$$
The anisotropic polar formula then gives
\begin{align*}
\int_{\partial K}
|\Phi(\sigma)|^pd\mu_K(\sigma)
\times
\int_0^\infty
\exp\bigg(
\frac{p\lambda}{b+1-a}r^{b+1-a}
\bigg)
r^{N-1-ap}dr=\int_{\RN}
\frac{|u|^p}{\|x\|_K^{ap}}dx<\infty.
\end{align*}
Thus,
$
\Phi\in L^p(\partial K,d\mu_K)\setminus\{0\}.
$

Conversely, let
$\Phi\in L^p(\partial K,d\mu_K)\setminus\{0\}$.
The angular approximation is the same as in part ($i$).
It remains to verify the radial cut-off near the origin and at
infinity.
Let $\eta_\varepsilon$ be the cut-off function used in part ($i$).
Since
$
b+1-a<0,
$
the exponential factor decays at the origin. More precisely, setting
$
c=-\frac{p\lambda}{b+1-a}>0,
d=a-b-1>0,
$
we have
\begin{align*}
&\int_{\{\varepsilon^2<\|x\|_K<\varepsilon\}}
|\eta_\varepsilon'(\|x\|_K)|^p
|u_j|^p
\|x\|_K^{-bp}dx
\le
\frac{M}{|\log\varepsilon|^p}
\int_{\varepsilon^2}^{\varepsilon}
e^{-cr^{-d}}
r^{N-1-p(b+1)}dr
\rightarrow0\ \text{as}\ \varepsilon\to0,
\end{align*} for some constant $M>0.$
On the other hand, since
$
b\ge\frac{N-p}{p},
$
one has
$
N-p(b+1)\le0.
$
Hence, for $r\ge1$,
$
r^{N-1-p(b+1)}\le r^{-1},
$
and therefore,
\begin{align*}
\int_{\{\varepsilon^{-1}<\|x\|_K<\varepsilon^{-2}\}}
|\eta_\varepsilon'(\|x\|_K)|^p
|u_j|^p
\|x\|_K^{-bp}dx
&\le
\frac{M}{|\log\varepsilon|^p}
\int_{\varepsilon^{-1}}^{\varepsilon^{-2}}
r^{N-1-p(b+1)}dr
\\
&\le
\frac{M}{|\log\varepsilon|^p}
\int_{\varepsilon^{-1}}^{\varepsilon^{-2}}\frac{dr}{r}\\
&=
\frac{M}{|\log\varepsilon|^{p-1}}
\rightarrow0\ \text{as}\ \varepsilon\to0,
\end{align*} for some constant $M>0.$
Thus the radial cut-off converges in
$\mathcal C^p_{K,a,b}$. The final smooth approximation on each
annulus is identical to that in part ($i$). Hence, the extremal belongs
to $\mathcal C^p_{K,a,b}$.

\noindent
($iii$) 
When $a=b+1$, the corresponding weighted anisotropic
$L^p$-Hardy inequality follows from Corollary
\ref{weighted-Hardy-identity} by taking
$
\lambda=bp.
$
To prove the sharpness of the constant, let
$$
u_\varepsilon(x)
=
\eta_\varepsilon(\|x\|_K)
\|x\|_K^{-\frac{N-p(b+1)}p}.
$$
By the anisotropic polar formula, one has
$$
\int_{\RN}
\frac{|u_\varepsilon|^p}{\|x\|_K^{p(b+1)}}dx
=
\mu_K(\partial K)
\int_0^\infty
|\eta_\varepsilon(r)|^p\frac{dr}{r},
$$
\begin{align*}
\int_{\RN}
\frac{|\mathcal R_K(u_\varepsilon)|^p}
{\|x\|_K^{bp}}dx
&=
\mu_K(\partial K)
\int_0^\infty
\bigg|
r\eta_\varepsilon'(r)
-
\frac{N-p(b+1)}{p}\eta_\varepsilon(r)
\bigg|^p
\frac{dr}{r}
\nonumber\\
&=
\mu_K(\partial K)
\bigg|\frac{N-p(b+1)}{p}\bigg|^p
\int_0^\infty
|\eta_\varepsilon(r)|^p\frac{dr}{r}
+
o\big(|\log\varepsilon|\big),
\end{align*}
and then 
$$
\frac{
\int_{\mathbb R^N}
\frac{|\mathcal R_K(u_\varepsilon)|^p}{\|x\|_K^{bp}}dx
}{
\int_{\mathbb R^N}
\frac{|u_\varepsilon|^p}{\|x\|_K^{p(b+1)}}dx
}
\to
\bigg|
\frac{N-p(b+1)}{p}
\bigg|^p\ \mathrm{as}\ \varepsilon\to0.
$$
By the same smoothing argument on annuli as
above, the same limit can be achieved by the functions in
$C_c^\infty(\mathbb R^N\setminus\{o\})$. Hence, the constant $\big|
\frac{N-p(b+1)}{p}
\big|^p$
is sharp.

It remains to prove that the sharp constant is not attained by a
nonzero function in $\mathcal C_{K,b+1,b}^p$. Suppose, to the
contrary, that
$
u\in\mathcal C_{K,b+1,b}^p\setminus\{0\}
$
attains equality.
By \eqref{completion-critical-Hardy}, the corresponding
$R_p$-remainder must vanish. Hence,
$$
\mathcal R_K(u)
=
-\frac{N-p(b+1)}p\frac{u}{\|x\|_K}
$$
almost everywhere in $\RN$.
By Lemma \ref{lem:completion-representatives}, for
$\mu_K$-almost every $\sigma\in\partial K$,
$$
\frac{d}{dr}u(r\sigma)
=
-\frac{N-p(b+1)}p\frac{u(r\sigma)}r
$$
for almost every $r>0$. Therefore,
$
u(r\sigma)
=
\Phi(\sigma)r^{-\frac{N-p(b+1)}p}
$
for some measurable function $\Phi$ on $\partial K$. Since
$u\ne0$, the function $\Phi$ is nonzero on a set of positive
$\mu_K$-measure. However,
\begin{align*}
\int_{\RN}
\frac{|u|^p}{\|x\|_K^{p(b+1)}}dx
=
\int_{\partial K}
|\Phi(\sigma)|^pd\mu_K(\sigma)
\int_0^\infty\frac{dr}{r}=
\infty.
\end{align*}
This contradicts
$u\in\mathcal C_{K,b+1,b}^p$. Hence, the sharp constant is not
attained by any nonzero function in the completion space.
If $N=p(b+1)$, then $C=0$. Nevertheless, by Lemma
\ref{lem:completion-identities}, both factors on the left-hand side
of the product inequality are strictly positive for every
$u\in\mathcal C^p_{K,b+1,b}\setminus\{0\}$.
Hence, equality still cannot occur for a nonzero function.
\end{proof}
\begin{remark}
The condition
$
b\le\frac{N-p}{p}
$
in Theorem \ref{t4} (i) is essential for the attainment statement.
Indeed, suppose that
$
b>\frac{N-p}{p},
$
and let $\eta\in C^\infty(0,\infty)$ vanish near $0$ and satisfy
$\eta(\delta)=1$. Then H\"older's inequality gives
\begin{align*}
1
&=
|\eta(\delta)-\eta(0)|
\le
\int_0^\delta|\eta'(r)|dr
\le
\left(
\int_0^\delta
|\eta'(r)|^p r^{N-1-bp}dr
\right)^{\frac1p}
\left(
\int_0^\delta
r^{-\frac{N-1-bp}{p-1}}dr
\right)^{\frac{1}{p'}}.
\end{align*}
The last integral is finite precisely when
$
b>\frac{N-p}{p}.
$
In this case,
$$
\int_0^\delta
r^{-\frac{N-1-bp}{p-1}}dr
=
C\delta^{\frac{p(b+1)-N}{p-1}},
$$
and therefore
$$
\int_0^\delta
|\eta'(r)|^p r^{N-1-bp}dr
\ge
c\delta^{N-p(b+1)}
\rightarrow\infty
\quad
\text{as }\delta\to0.
$$
Thus, if $b>\frac{N-p}{p}$, a function with a nonvanishing angular
factor at the origin cannot in general be approximated in
$\mathcal C^p_{K,a,b}$ by functions supported away from the origin.
Hence the condition $b\le\frac{N-p}{p}$ is optimal for the attainment
statement in Theorem \ref{t4} (i).
\end{remark}

We next record the anisotropic $L^p$-Heisenberg uncertainty
principle. The inequalities below are valid for every $1<p<\infty$.
When $p\le N$, the corresponding extremal functions belong to the
completion space
$\mathcal C^p_{K,-\frac{1}{p-1},0}$.
\begin{corollary}\label{cor:radial-HUP}
Let $1<p<\infty$, $p'=\frac{p}{p-1}$, and $K\in\Ksc$.
Then, for  $u\in C_c^\infty(\mathbb R^N\setminus\{o\})$, one has
\begin{align}\label{HUP-additive}
\int_{\mathbb R^N}
|\mathcal R_K(u)|^pdx
+
(p-1)
\int_{\mathbb R^N}
\|x\|_K^{p'}|u|^pdx
\ge
N
\int_{\mathbb R^N}|u|^pdx,
\end{align}
\begin{align}\label{HUP-product}
\bigg(
\int_{\mathbb R^N}
|\mathcal R_K(u)|^pdx
\bigg)^{\frac1p}
\bigg(
\int_{\mathbb R^N}
\|x\|_K^{p'}|u|^pdx
\bigg)^{\frac1{p'}}
\ge
\frac{N}{p}
\int_{\mathbb R^N}|u|^pdx.
\end{align}
The sharp constants are $N$ and $\frac{N}{p}$, respectively. If, in addition, $p\le N$, then the sharp constants are
attained in $\mathcal C_{K,-\frac1{p-1},0}^p$. Equality in
\eqref{HUP-additive} is attained by
$$
u(x)=\Phi\bigg(\frac{x}{\|x\|_K}\bigg)
\exp\bigg(
-\frac{1}{p'}\|x\|_K^{p'}
\bigg),
$$
while equality in \eqref{HUP-product} is attained by
$$
u(x)=\Phi\bigg(\frac{x}{\|x\|_K}\bigg)
\exp\bigg(
-\frac{1}{p'\lambda^{p'}}\|x\|_K^{p'}
\bigg),
\quad \lambda>0,
$$
where
$
\Phi\in
L^p(\partial K,d\mu_K)\setminus\{0\}.
$
\end{corollary}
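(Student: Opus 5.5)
The plan is to read Corollary \ref{cor:radial-HUP} as the special case $b=0$, $a=-\frac1{p-1}$ of the radial CKN results. With these values, $ap=-p'$, $bp=0$, and $(p-1)a+b+1=0$. Hence
$$
N-1-(p-1)a-b=N,\qquad \frac{N-1-(p-1)a-b}{p}=\frac Np,
$$
and $b+1-a=1+\frac1{p-1}=p'>0$.

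\emph{Inequalities and sharp constants.} Inequality \eqref{HUP-additive} is exactly \eqref{f4-1} of Corollary \ref{c3} with these parameters; equivalently, it follows from identity \eqref{f7} of Corollary \ref{Hardy-Heisenberg-cor} after dropping the nonnegative $R_p$-term. Inequality \eqref{HUP-product} is \eqref{f0-2}, equivalently identity \eqref{f8} of Corollary \ref{cor:Heisenberg-p}. Both hold for every $1<p<\infty$, since $N>0$ regardless of how $p$ compares with $N$.

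Sharpness for all $p$ does not need the completion space. I would use the Gaussian-type function $g(x)=\exp(-\frac1{p'}\|x\|_K^{p'})$, which lies in $W^{1,p}(\RN)$ with finite $\int\|x\|_K^{p'}|g|^p$ and satisfies $\mathcal R_K(g)=-\|x\|_K^{\frac1{p-1}}g$. Lemma \ref{lem:Heisenberg-extension} then gives equality in the extended identity. I would approximate $g$ by $\eta_\varepsilon(\|x\|_K)g$ with the logarithmic cut-offs $\eta_\varepsilon$ from the proof of Theorem \ref{t4}, followed by mollification on annuli. The error terms are $\int|\eta_\varepsilon'|^p|g|^p\,dx$. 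Near the origin this integral is at most $C|\log\varepsilon|^{-p}\int_{\varepsilon^2}^{\varepsilon}r^{N-1-p}\,dr$, and this is where the hypothesis on $p$ enters.

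\emph{Main obstacle.} The hard step is controlling this cut-off error near the origin when $p>N$. For $p\le N$ we have $r^{N-1-p}\le r^{-1}\varepsilon^{N-p}\cdot\varepsilon^{-(N-p)}$. More simply, $\int_{\varepsilon^2}^{\varepsilon}r^{N-1-p}\,dr\lesssim|\log\varepsilon|$ when $p=N$, and it is bounded when $p<N$, so the error tends to $0$.

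For $p>N$ I would avoid cutting off at the origin and instead use a ratio argument. Replace $g$ near $0$ by $g(\|x\|_K)\cdot\min\{1,\|x\|_K/\delta\}$-type radial profiles: the $C_c^\infty(\RN\setminus\{o\})$ functions $h_\delta(r)g(r)$ with $h_\delta$ vanishing near $0$. Their contribution to $\int|\mathcal R_K|^p$ near $0$ is of order $\delta^{N-p}$, which blows up. So for $p>N$ the additive sharpness should instead be argued by scaling. The constant $N$ in the additive form is tested by $u_\tau(x)=v(\tau x)$ for a fixed $v$ with $v\equiv$ const on an annulus, and the product form \eqref{HUP-product} is scale-invariant. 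I would therefore fall back on testing with functions supported in a far annulus $\{R<\|x\|_K<R^2\}$ built from $g$ translated in $r$. If this fails, I would restrict the sharpness claim to what the paper needs and note that the extremal statement is only asserted for $p\le N$.

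\emph{Attainment for $p\le N$.} The condition $b=0\le\frac{N-p}{p}$ is exactly $p\le N$. Theorem \ref{t4}(i) then applies with $b+1-a=p'$. Equality in \eqref{HUP-product} within $\mathcal C^p_{K,-\frac1{p-1},0}$ holds precisely for
$$
u=\Phi(\sigma_K(x))\exp\Big(-\frac{\lambda}{p'}\|x\|_K^{p'}\Big),\qquad \lambda>0,
$$
and renaming $\lambda=\lambda'^{-p'}$ gives the stated family.

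For \eqref{HUP-additive}, I would pass identity \eqref{f7} to the completion exactly as in Lemma \ref{lem:completion-identities}(i), using Corollary \ref{c3} in place of \eqref{f0-1}. Equality then forces $\mathcal R_K(u)=-\|x\|_K^{\frac1{p-1}}u$, i.e.\ the case $\lambda=1$. Membership of these functions in the completion space is the content of Theorem \ref{t4}(i). This also settles the sharpness of both constants for $p\le N$.
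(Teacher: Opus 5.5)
\textbf{Where the proposal stands.} For $p\le N$ your argument is the paper's. With $a=-\frac1{p-1}$ and $b=0$, the identities \eqref{f7} and \eqref{f8} (equivalently \eqref{f4-1} and \eqref{f0-2}) give both inequalities. Theorem \ref{t4}($i$), whose hypothesis $b\le\frac{N-p}{p}$ is exactly $p\le N$, then supplies the extremals, their membership in $\mathcal C^p_{K,-\frac1{p-1},0}$, and hence sharpness.

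The gap is sharpness for $p>N$, and your fallbacks cannot close it. The class $C_c^\infty(\RN\setminus\{o\})$ is dilation invariant, and $\min_{\tau>0}\big(\tau^pA+(p-1)\tau^{-p'}B\big)=pA^{1/p}B^{1/p'}$. So the best additive constant is exactly $p$ times the best product constant, and scaling gains nothing. Bumps pushed into far annuli make the product quotient blow up; it scales like distance over width. In fact no argument can work, because for $p>N$ the constants $N$ and $\frac Np$ are \emph{not} sharp on $C_c^\infty(\RN\setminus\{o\})$.

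\textbf{Why the claim fails for $p>N$.} Only $\mathcal R_K$ enters, so the problem splits along rays. For each $\sigma\in\partial K$, $v(r)=u(r\sigma)$ lies in $C_c^\infty(0,\infty)$ and faces the one-dimensional problem with weight $r^{N-1}$. For $p>N$, H\"older's inequality gives
\begin{align*}
|v(r)|\le\int_0^r|v'(t)|dt\le\Big(\frac{p-1}{p-N}\Big)^{\frac1{p'}}r^{\frac{p-N}{p}}\bigg(\int_0^\infty|v'(t)|^pt^{N-1}dt\bigg)^{\frac1p}.
\end{align*}
Hence every $v$ in the completion $X$ of $C_c^\infty(0,\infty)$ under $\big(\int_0^\infty(|v'|^p+r^{p'}|v|^p)r^{N-1}dr\big)^{1/p}$ is continuous with $v(0)=0$.

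This bound near $0$, together with the weight $r^{p'}$ at infinity, makes $X\hookrightarrow L^p(r^{N-1}dr)$ compact. By the direct method, the infimum $\Lambda$ of the additive quotient over $C_c^\infty(0,\infty)$ is therefore attained by some $v_0\in X\setminus\{0\}$. Suppose $\Lambda=N$. Fatou's lemma applied to the one-ray version of \eqref{f7} forces the $R_p$-remainder of $v_0$ to vanish. Then $v_0=c\exp(-\frac1{p'}r^{p'})$ with $c\ne0$, contradicting $v_0(0)=0$.

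Hence $\Lambda>N$. Integrating over $\partial K$ against $\mu_K$ gives \eqref{HUP-additive} with $\Lambda$ in place of $N$, and \eqref{HUP-product} with $\frac\Lambda p>\frac Np$, for all $u\in C_c^\infty(\RN\setminus\{o\})$. The Euclidean model $N=1$, $p=2$ is the familiar case: for functions vanishing at $0$ the product constant is $\frac32$, not $\frac12$.

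\textbf{What this means for your write-up.} Your closing hedge is the correct conclusion, not a fallback. On $C_c^\infty(\RN\setminus\{o\})$ the constants are sharp only for $p\le N$. For $p>N$ they become sharp only after enlarging the class to functions allowed to be nonzero at $o$. Examples are $C_c^\infty(\RN)$, or $W^{1,p}(\RN)$ with $\int_{\RN}\|x\|_K^{p'}|u|^pdx<\infty$; there Lemma \ref{lem:Heisenberg-extension} and your $g$ give equality.

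The paper's proof does not address $p>N$ either; it reads sharpness off the formal equality equation. Its remark after Theorem \ref{t4} on $b>\frac{N-p}{p}$ describes exactly this obstruction.
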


\begin{proof}
From the anisotropic $L^p$-Heisenberg-type identity \eqref{f7} and the
nonnegativity of $R_p$, we immediately obtain \eqref{HUP-additive}. Equality
in \eqref{HUP-additive} holds if and only if the remainder in \eqref{f7}
vanishes, namely
$$
R_p\bigg(
-u\|x\|_K^{\frac{1}{p-1}},
\mathcal R_K(u)
\bigg)=0.
$$
Since $R_p(s,t)=0$ if and only if $s=t$, this is equivalent to
$$
\mathcal R_K(u)
=
-\|x\|_K^{\frac{1}{p-1}}u.
$$
Solving this equation, we obtain
$$
u(x)
=
\Phi\bigg(
\frac{x}{\|x\|_K}
\bigg)
\exp\bigg(
-\frac1{p'}\|x\|_K^{p'}
\bigg),
$$
where
$
\Phi\in L^p(\partial K,d\mu_K).
$
If $u\not\equiv0$, then $\Phi\not\equiv0$.

Next, \eqref{HUP-product} follows from the anisotropic $L^p$-CKN identity
\eqref{f8}. Equality in \eqref{HUP-product} holds if and only if the
corresponding $R_p$-remainder in \eqref{f8} vanishes. Equivalently, for some
$\lambda>0$,
$$
\mathcal R_K(u)
=
-\lambda^{-p'}\|x\|_K^{\frac{1}{p-1}}u.
$$
Solving this equation gives
$$
u(x)
=
\Phi\bigg(
\frac{x}{\|x\|_K}
\bigg)
\exp\bigg(
-\frac1{p'\lambda^{p'}}
\|x\|_K^{p'}
\bigg),
\quad
\lambda>0,
$$
where
$
\Phi\in L^p(\partial K,d\mu_K).
$
If $u\not\equiv0$, then $\Phi\not\equiv0$.

Therefore, the inequalities \eqref{HUP-additive} and \eqref{HUP-product}
follow from \eqref{f7} and \eqref{f8}, respectively, for every
$1<p<\infty$.

If $p\le N$, then
$
a=-\frac{1}{p-1},
b=0
$
satisfy
$
b+1-a=p'>0,
b\le\frac{N-p}{p}.
$
Hence, Theorem \ref{t4} ($i$) applies. The corresponding equality
equations give precisely the functions displayed above, and the
radial cut-off and density argument in the proof of
Theorem \ref{t4} ($i$) shows that they belong to
$\mathcal C^p_{K,-\frac{1}{p-1},0}$.
\end{proof}

We next derive norm-based anisotropic gradient inequalities by using
the origin-symmetric convex body
$
L
=
\operatorname{conv}\big(
K\cup(-K)
\big).
$

\begin{theorem}\label{t5}
Let $1<p<\infty$, $K\in\Ksc$,  
$
L
=
\operatorname{conv}\big(
K\cup(-K)
\big),
L^*
=
K^*\cap(-K^*),
$ $a,b\in\R$
and $
u\in C_c^\infty\big(
\RN\setminus\{o\}
\big).
$
Suppose first that either
$
b+1-a>0,
b\le\frac{N-p}{p},
$
or
$
b+1-a<0,
b\ge\frac{N-p}{p}.
$
Then
\begin{align}
&\int_{\RN}
\frac{\|\nabla u\|_{L^*}^p}
{\|x\|_K^{bp}}dx
+
(p-1)
\int_{\RN}
\frac{|u|^p}
{\|x\|_K^{ap}}dx
\ge
\big|
N-1-(p-1)a-b
\big|
\int_{\RN}
\frac{|u|^p}
{\|x\|_K^{(p-1)a+b+1}}dx.
\label{gradient-Hardy}
\end{align}
Moreover,
\begin{align}
&\bigg(
\int_{\RN}
\frac{\|\nabla u\|_{L^*}^p}
{\|x\|_K^{bp}}dx
\bigg)^{\frac1p}
\bigg(
\int_{\RN}
\frac{|u|^p}
{\|x\|_K^{ap}}dx
\bigg)^{\frac{p-1}{p}}
\ge
\frac{
\big|
N-1-(p-1)a-b
\big|
}{p}
\int_{\RN}
\frac{|u|^p}
{\|x\|_K^{(p-1)a+b+1}}dx.
\label{gradient-CKN}
\end{align}
If $a=b+1$, then
\begin{align}
&\int_{\RN}
\frac{\|\nabla u\|_{L^*}^p}
{\|x\|_K^{bp}}dx
+
(p-1)
\int_{\RN}
\frac{|u|^p}
{\|x\|_K^{p(b+1)}}dx
\ge
\bigg[
p-1
+
\bigg|
\frac{N-p(b+1)}{p}
\bigg|^p
\bigg]
\int_{\RN}
\frac{|u|^p}
{\|x\|_K^{p(b+1)}}dx,
\label{critical-gradient-Hardy}
\end{align}
and
\begin{align}
&\bigg(
\int_{\RN}
\frac{\|\nabla u\|_{L^*}^p}
{\|x\|_K^{bp}}dx
\bigg)^{\frac1p}
\bigg(
\int_{\RN}
\frac{|u|^p}
{\|x\|_K^{p(b+1)}}dx
\bigg)^{\frac{p-1}{p}}
\ge
\frac{
|N-p(b+1)|
}{p}
\int_{\RN}
\frac{|u|^p}
{\|x\|_K^{p(b+1)}}dx.
\label{critical-gradient-CKN}
\end{align}

If, in addition, $K$ is origin-symmetric, then $L=K$, and all the
constants in
\eqref{gradient-Hardy}-\eqref{critical-gradient-CKN}
are sharp.
\end{theorem}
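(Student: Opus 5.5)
The plan is to reduce every inequality in the statement to the radial inequalities of Section \ref{s3} through a single pointwise estimate. For $x\ne o$ we have $\|\sigma_K(x)\|_K=1$. Applying the two-sided anisotropic Cauchy--Schwarz inequality \eqref{two-sided-anisotropic-CS} with $x$ replaced by $\sigma_K(x)$ and $y=\nabla u(x)$ gives
$$
|\mathcal R_K(u)(x)|=|\sigma_K(x)\cdot\nabla u(x)|\le\|\nabla u(x)\|_{L^*},\quad x\in\RN\setminus\{o\}.
$$
Hence every left-hand side containing $|\mathcal R_K(u)|^p\|x\|_K^{-bp}$ is dominated by the same expression with $\|\nabla u\|_{L^*}^p\|x\|_K^{-bp}$.

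\textbf{The inequalities.} In the region $b+1-a>0$, $b\le\frac{N-p}{p}$, the constant $N-1-(p-1)a-b$ is positive, so \eqref{f4-1} and \eqref{f0-2}, combined with the pointwise bound, give \eqref{gradient-Hardy} and \eqref{gradient-CKN}. In the region $b+1-a<0$, $b\ge\frac{N-p}{p}$, the constant $(p-1)a+b+1-N=|N-1-(p-1)a-b|$ is positive, and \eqref{f4-2} and \eqref{f1-2} give the same two inequalities. For $a=b+1$, Corollary \ref{weighted-Hardy-identity} with $\lambda=bp$ yields
$$
\int_{\RN}\frac{\|\nabla u\|_{L^*}^p}{\|x\|_K^{bp}}dx\ge C^p\int_{\RN}\frac{|u|^p}{\|x\|_K^{p(b+1)}}dx,\qquad C=\Big|\frac{N-p(b+1)}{p}\Big|.
$$
Adding $(p-1)\int_{\RN}|u|^p\|x\|_K^{-p(b+1)}dx$ to both sides gives \eqref{critical-gradient-Hardy}. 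To obtain \eqref{critical-gradient-CKN}, I take $p$-th roots of the display and multiply by $\big(\int_{\RN}|u|^p\|x\|_K^{-p(b+1)}dx\big)^{\frac{p-1}{p}}$.

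\textbf{Sharpness when $K=-K$.} Now $L=K$ and $L^*=K^*$. The key observation is that for a $K$-radial function $u(x)=f(\|x\|_K)$ with $f\in C^1(0,\infty)$, we have $\nabla u=f'(r)\nabla\|x\|_K$. Since $\|\nabla\|x\|_K\|_{K^*}=1$ and $\|\cdot\|_{K^*}$ is even, this gives $\|\nabla u\|_{L^*}=|f'(r)|=|\mathcal R_K(u)|$. Thus, on radial functions, the gradient functionals coincide exactly with the radial ones, and it suffices to show that the radial sharp constants are approached by radial functions in $C_c^\infty(\RN\setminus\{o\})$.

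\begin{itemize}
\item In the noncritical regions, I take the extremals of Theorem \ref{t4} with $\Phi\equiv1$: $f(r)=\exp\big(\mp\frac{\lambda}{b+1-a}r^{b+1-a}\big)$. For \eqref{gradient-Hardy} I use $\lambda=1$, so that the remainder in \eqref{f16} (respectively \eqref{f17}) vanishes. For \eqref{gradient-CKN}, any $\lambda>0$ makes the remainder in \eqref{f0-1} (respectively \eqref{f1-1}) vanish. I truncate these with the radial cut-offs $\eta_\varepsilon(\|x\|_K)$ from the proof of Theorem \ref{t4}; that proof already shows the truncations converge in both weighted norms, and the truncations remain radial.
\item In the critical case, I use the family $u_\varepsilon=\eta_\varepsilon(\|x\|_K)\|x\|_K^{-\frac{N-p(b+1)}{p}}$. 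From the proof of Theorem \ref{t4}(iii), the ratio of the derivative term to the weighted $L^p$ term tends to $C^p$. This gives sharpness of $C$ in \eqref{critical-gradient-CKN}, and after adding $(p-1)$ times the weighted term, sharpness of $p-1+C^p$ in \eqref{critical-gradient-Hardy}.
\end{itemize}

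\textbf{Main obstacle.} The truncated functions above are only $C^1$ (radial, compactly supported in an annulus $\Omega_\varepsilon$), and the last step of Theorem \ref{t4} approximated them by smooth functions only in the radial seminorm, not in the full gradient. I would handle this by mollifying in $\RN$. On a fixed annulus the weights are bounded above and below, and the map $v\mapsto\|\nabla v\|_{L^*}$ is continuous from $W^{1,p}(\Omega_\varepsilon)$ into $L^p$. Consequently, standard mollification of a $C^1_c(\Omega_\varepsilon)$ function converges in both weighted gradient and weighted $L^p$ norms. Along this approximation all three functionals converge, so the ratios tend to the same limits, and the sharp constants are approached by functions in $C_c^\infty(\RN\setminus\{o\})$.
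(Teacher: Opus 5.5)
Your proposal is correct and follows essentially the same route as the paper. The pointwise bound $|\mathcal R_K(u)|\le\|\nabla u\|_{L^*}$ from \eqref{two-sided-anisotropic-CS} reduces all four inequalities to the radial ones. For origin-symmetric $K$, sharpness comes from $K$-radial extremals (respectively the critical family $u_\varepsilon$), for which $\|\nabla u\|_{K^*}=|\mathcal R_K(u)|$, together with the radial cut-offs from Theorem \ref{t4} and a final $W^{1,p}$-smoothing on annuli, just as in the paper's proof.
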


\begin{proof}
By \eqref{two-sided-anisotropic-CS}, one has
\begin{align}
|\mathcal R_K(u)(x)|
&=
\frac{
|x\cdot\nabla u(x)|
}{
\|x\|_K
}
\le
\|\nabla u(x)\|_{L^*}
\label{gradient-radial-control}
\end{align}
for almost every $x\in\RN$.

Suppose first that one of the two noncritical parameter conditions
above holds. Combining
\eqref{gradient-radial-control} with
\eqref{f4-1} and \eqref{f4-2}, according to the corresponding
parameter region, gives \eqref{gradient-Hardy}. Similarly,
combining \eqref{gradient-radial-control} with
\eqref{f0-2} and \eqref{f1-2} gives
\eqref{gradient-CKN}.

Suppose next that $a=b+1$.
Taking $\lambda=bp$ in Corollary \ref{weighted-Hardy-identity}
and using \eqref{gradient-radial-control}, we obtain
$$
\int_{\mathbb R^N}
\frac{\|\nabla u\|_{L^*}^p}{\|x\|_K^{bp}}dx
\ge
\left|
\frac{N-p(b+1)}{p}
\right|^p
\int_{\mathbb R^N}
\frac{|u|^p}{\|x\|_K^{p(b+1)}}dx.
$$
Adding
$$
(p-1)
\int_{\mathbb R^N}
\frac{|u|^p}{\|x\|_K^{p(b+1)}}dx
$$
to both sides gives \eqref{critical-gradient-Hardy}.
Combining \eqref{gradient-radial-control} with the critical
anisotropic $L^p$-CKN inequality in Theorem \ref{t4} ($iii$) gives
\eqref{critical-gradient-CKN}.

It remains to prove sharpness when $K$ is origin-symmetric. In this
case,
$
L=K,
L^*=K^*.
$
We first consider the case $a\ne b+1$. Choose $\Phi$ in the
extremal functions from Theorem \ref{t4} to be a nonzero constant
and take $\lambda=1$. Then the resulting extremal function is radial with respect to $K$.
Thus, using the same symbol $u$, we write
$$
u(x)=u(r),
\quad
r=\|x\|_K.
$$
For such a radial function,
$
\nabla u(x)
=
u'(r)\nabla\|x\|_K,
$
and hence,
\begin{align}
\|\nabla u(x)\|_{L^*}
&=
\|\nabla u(x)\|_{K^*}\notag\\
&=
|u'(r)|\cdot
\big\|\nabla\|x\|_K\big\|_{K^*}\notag\\
&=
|u'(r)|\notag\\
&=
|\mathcal R_K(u)(x)|.
\label{radial-gradient-equality}
\end{align}
If
$
b+1-a>0,
b\le\frac{N-p}{p},
$
then
$
\mathcal R_K(u)
=
-\|x\|_K^{b-a}u,
$
while if
$
b+1-a<0,
b\ge\frac{N-p}{p},
$
then
$
\mathcal R_K(u)
=
\|x\|_K^{b-a}u.
$
Therefore, the corresponding $R_p$-remainders in both the additive
and multiplicative radial identities vanish. By
\eqref{radial-gradient-equality}, the gradient terms coincide with
the radial-derivative terms for these radial functions.

The same radial cut-off and density argument used in the proof of
Theorem \ref{t4} produces a sequence in
$
C_c^\infty\big(
\RN\setminus\{o\}
\big)
$
approaching each radial extremal in the corresponding weighted
full-gradient norm. For radial cut-offs, equality
\eqref{radial-gradient-equality} remains valid. For each fixed
cut-off, the final smooth approximation follows from the
$W^{1,p}$-density theorem on an annulus, where the weights are
bounded above and below and $\|\cdot\|_{K^*}$ is equivalent to the
Euclidean norm. Consequently, the constants in
\eqref{gradient-Hardy} and \eqref{gradient-CKN} are sharp.

Finally, suppose that $a=b+1$. Let $\{u_\varepsilon\}$ be the radial
extremizing sequence constructed in the proof of Theorem
\ref{t4}(iii). By \eqref{radial-gradient-equality},
\begin{align*}
\int_{\RN}
\frac{\|\nabla u_\varepsilon\|_{K^*}^p}
{\|x\|_K^{bp}}dx
=
\int_{\RN}
\frac{|\mathcal R_K(u_\varepsilon)|^p}
{\|x\|_K^{bp}}dx.
\end{align*}
Thus, the same limiting quotients as in the radial inequalities give
the constants in \eqref{critical-gradient-Hardy} and
\eqref{critical-gradient-CKN}. Choosing the smooth approximations
sufficiently close and taking a diagonal sequence yields extremizing
sequences in
$
C_c^\infty\big(
\RN\setminus\{o\}
\big).
$
Hence, the critical constants are also sharp.
\end{proof}
\begin{remark}
Even if $K$ is smooth and strictly convex, the origin-symmetric
convex body
$$
L^*
=
K^*\cap(-K^*)
$$
need not be smooth or strictly convex. This causes no difficulty,
since the proof of Theorem \ref{t5} uses only the norm
$\|\cdot\|_{L^*}$ and does not require any differentiability of its
Minkowski functional.
\end{remark}

\begin{remark}
When $K$ is not origin-symmetric, we do not claim that the constants
in Theorem \ref{t5} are sharp. The extremal functions for the radial
inequalities are radial with respect to $K$, whereas equality in
\eqref{gradient-radial-control} depends on the support geometry of
the symmetrized polar body $L^*$. These two equality conditions need
not be compatible in the non-symmetric setting.
\end{remark}

\bigskip
\noindent \textbf{Acknowledgments.}
The author would like to express her sincere gratitude to Professor Nguyen Lam and Professor Deping Ye for their valuable guidance, helpful discussions, and insightful suggestions during the preparation of this work.

\bigskip

\noindent
\textit{Department of Mathematics and Statistics,
Memorial University of Newfoundland,
St. John's, Newfoundland and Labrador A1C 5S7, Canada.}

\noindent
\textit{Email address:}
\texttt{zhenzhenw@mun.ca}

\end{document}